\numberwithin{equation}{section}
\newtheorem{Theorem}{Theorem}[section]
\newtheorem{Corollary}{Corollary}[section]
\newtheorem{Lemma}{Lemma}[section]
\newtheorem{Proposition}{Proposition}[section]
{ \theoremstyle{definition}
\newtheorem{Definition}{Definition}[section]
\newtheorem{Example}{Example}[section]
\newtheorem{Remark}{Remark}[section] }
\newcommand{\N}{\mathbb{N}}
\newcommand{\K}{\mathbb{K}}
\newcommand{\I}{\mathcal{I}}
\newcommand{\ospq}{\mathfrak{osp}_q(1\vert 2)}
\newcommand{\Uqsl}{U_q(\mathfrak{sl}_2)}
\begin{document}
\allowdisplaybreaks

\newcommand{\arXivNumber}{1908.11654}

\renewcommand{\PaperNumber}{099}

\FirstPageHeading

\ShortArticleName{Higher Rank Relations for the Askey--Wilson and $q$-Bannai--Ito Algebra}

\ArticleName{Higher Rank Relations for the Askey--Wilson\\ and $\boldsymbol{q}$-Bannai--Ito Algebra}

\Author{Hadewijch DE CLERCQ}

\AuthorNameForHeading{H.~De Clercq}

\Address{Department of Electronics and Information Systems, Faculty of Engineering and Architecture,\\ Ghent University, Belgium}
\Email{\href{mailto:hadewijch.declercq@ugent.be}{hadewijch.declercq@ugent.be}}

\ArticleDates{Received September 03, 2019, in final form December 13, 2019; Published online December 19, 2019}

\Abstract{The higher rank Askey--Wilson algebra was recently constructed in the $n$-fold tensor product of $U_q(\mathfrak{sl}_2)$. In this paper we prove a class of identities inside this algebra, which generalize the defining relations of the rank one Askey--Wilson algebra. We extend the known construction algorithm by several equivalent methods, using a novel coaction. These allow to simplify calculations significantly. At the same time, this provides a proof of the corresponding relations for the higher rank $q$-Bannai--Ito algebra.}

\Keywords{Askey--Wilson algebra; Bannai--Ito algebra}

\Classification{16T05; 16T15; 17B37; 81R50}

\section{Introduction}

The Askey--Wilson algebra was introduced in \cite{Zhedanov-1991} as an algebraic foundation for the bispectral problem of the Askey--Wilson orthogonal polynomials \cite{Koekoek&Lesky&Swarttouw-2010}. More precisely, Zhedanov defined this algebra by generators and relations, which turned out to be satisfied when realizing the generators as the Askey--Wilson $q$-difference operator on the one hand, and the multiplication with the variable on the other. A central extension, which allows a $\mathbb{Z}_3$-symmetric presentation, was defined and studied by Terwilliger \cite{Terwilliger-2011}. He calls this central extension the universal Askey--Wilson algebra. We will denote it by ${\rm AW}(3)$.

The irreducible finite-dimensional representations of the Askey--Wilson algebra have been classified by Leonard pairs \cite{Terwilliger-2001,Terwilliger&Vidunas-2004} and a similar classification for the universal Askey--Wilson algebra ${\rm AW}(3)$ has appeared in \cite{Huang-2015}.
Further applications arise in the theory of special functions~\cite{Baseilhac&Martin&Vinet&Zhedanov-2019, Koornwinder&Mazzocco-2018}, tridiagonal and Leonard pairs \cite{Nomura-2005,Terwilliger-2004, Vidunas-2008}, superintegrable quantum systems \cite{Baseilhac-2005} and the reflection equation \cite{Baseilhac-2005-2}. Its use to quantum mechanics is further emphasized by the identification of the Askey--Wilson algebra as a quotient of the $q$-Onsager algebra \cite{Terwilliger-2004-2}, which originates from statistical mechanics. This connection was later extended to the universal Askey--Wilson algebra \cite{Terwilliger-2018}. Furthermore, an explicit homomorphism from the original Askey--Wilson algebra to the double affine Hecke algebra of type $\big(C_1^{\vee},C_1\big)$ has been constructed in \cite{Ito&Terwilliger-2010, Koornwinder-2007}. Recently, connections with $q$-Higgs algebras \cite{Frappat&Gaboriaud&Ragoucy&Vinet-2019} and Howe dual pairs \cite{Frappat&Gaboriaud&Ragoucy&Vinet-2019-2} have been obtained.

The original Askey--Wilson algebra \cite{Zhedanov-1991} was realized inside the quantum group $U_q(\mathfrak{sl}_2)$ in \cite{Granovskii&Zhedanov-1993}. A different realization in the three-fold tensor product of $U_q(\mathfrak{sl}_2)$ was given in \cite{Granovskii&Zhedanov-1993-2}, and was later extended to the universal Askey--Wilson algebra ${\rm AW}(3)$ of \cite{Terwilliger-2011} in \cite{Huang-2017}. In the latter reference, ${\rm AW}(3)$ is embedded in the threefold tensor product of $\Uqsl$: if one denotes by $\Lambda$ the quadratic Casimir element of $\Uqsl$ and by $\Delta$ its coproduct, and defines
\begin{gather}
\Lambda_{\{1\}} = \Lambda\otimes 1\otimes 1, \qquad \Lambda_{\{2\}} = 1\otimes\Lambda\otimes 1, \qquad \Lambda_{\{3\}} = 1\otimes 1\otimes\Lambda,\nonumber\\
\Lambda_{\{1,2\}} = \Delta(\Lambda)\otimes 1, \qquad \Lambda_{\{2,3\}} = 1\otimes\Delta(\Lambda), \qquad
\Lambda_{\{1,2,3\}} = (1\otimes\Delta)\Delta(\Lambda),\label{Lambdas without 13}
\end{gather}
then these elements generate the universal Askey--Wilson algebra. Indeed, they satisfy the $q$-commutation relations
\begin{gather}
\label{rank one 12-23}
[\Lambda_{\{1,2\}},\Lambda_{\{2,3\}}]_q = \big(q^{-2}-q^2\big) \Lambda_{\{1,3\}} + \big(q-q^{-1}\big)\big(\Lambda_{\{2\}}\Lambda_{\{1,2,3\}} + \Lambda_{\{1\}}\Lambda_{\{3\}}\big), \\
\label{rank one 23-13}
[\Lambda_{\{2,3\}},\Lambda_{\{1,3\}}]_q = \big(q^{-2}-q^2\big)\Lambda_{\{1,2\}} + \big(q-q^{-1}\big)\big(\Lambda_{\{3\}}\Lambda_{\{1,2,3\}} + \Lambda_{\{1\}}\Lambda_{\{2\}}\big), \\
\label{rank one 13-12}
[\Lambda_{\{1,3\}},\Lambda_{\{1,2\}}]_q = \big(q^{-2}-q^2\big)\Lambda_{\{2,3\}} + \big(q-q^{-1}\big)\big(\Lambda_{\{1\}}\Lambda_{\{1,2,3\}} + \Lambda_{\{2\}}\Lambda_{\{3\}}\big),
\end{gather}
where $\Lambda_{\{1,3\}}$ is defined through relation (\ref{rank one 12-23}), and $\Lambda_{\{1,2,3\}}$ and all $\Lambda_{\{i\}}$ are central. This coincides with the presentation for ${\rm AW}(3)$ given in~\cite{Terwilliger-2011}. This illustrates that there exists an algebra homomorphism from ${\rm AW}(3)$ to $\Uqsl^{\otimes 3}$ and this map turns out to be injective, as shown in \cite[Theorem~4.8]{Huang-2017}.

In \cite{DeBie&DeClercq&vandeVijver-2018} this approach was generalized to $n$-fold tensor products for arbitrary $n$, which leads to an extension of the universal Askey--Wilson algebra to higher rank, denoted ${\rm AW}(n)$. The same algorithm allows to construct a higher rank extension of the $q$-Bannai--Ito algebra, which is isomorphic to ${\rm AW}(3)$ under a transformation $q\to -q^2$ and allows a similar embedding in $\ospq^{\otimes 3}$ \cite{Genest&Vinet&Zhedanov-2016}. Also the limiting cases $q = 1$ provide interesting algebras, as summarized graphically below.
\begin{figure}[h]
\centering
\begin{tikzpicture}[scale = 0.8, every node/.style={scale=0.9}]
\node[draw = black, rectangle, rounded corners = 0.10 cm, text width = 2.5 cm, text centered] at (0,0) {Askey--Wilson\\ algebra};
\node[draw = black, rectangle, rounded corners = 0.10 cm, text width = 2.5 cm, text centered] at (0,-2) {Racah algebra};
\node[draw = black, rectangle, rounded corners = 0.10 cm, text width = 2.5 cm, text centered] at (7,0) {$q$-Bannai--Ito algebra};
\node[draw = black, rectangle, rounded corners = 0.10 cm, text width = 2.5 cm, text centered] at (7,-2) {Bannai--Ito algebra};
\draw[->, line width = 0.35 mm] (0,-0.75) -- (0,-1.5);
\draw[<-, line width = 0.35 mm] (1.75,0) -- (5.25,0);
\draw[->, line width = 0.35 mm] (7,-0.7) -- (7,-1.3);
\node[draw = none, text width = 2.5 cm, text centered] at (0.75, -1.125) {\small $q\to 1$};
\node[draw = none, text width = 2.5 cm, text centered] at (3.5, 0.35) {\small $q\to -q^2 $};
\node[draw = none, text width = 2.5 cm, text centered] at (7.75, -1) {\small $q\to 1$};
\end{tikzpicture}
\end{figure}

Such higher rank algebras are motivated by their role as symmetry algebras for superintegrable quantum systems of higher dimension. This has been confirmed in the limiting case $q = 1$ \cite{DeBie&Genest&Lemay&Vinet-2017, DeBie&Genest&vandeVijver&Vinet-2016, DeBie&Genest&Vinet-2016}, and later also for general $q$ \cite{DeBie&DeClercq&vandeVijver-2018}. In both cases, the Hamiltonians under consideration are built from Dunkl operators with $\mathbb{Z}_2^n$ symmetry \cite{Dunkl-1989}, possibly $q$-deformed \cite{Cherednik-1995}. Moreover, these higher rank algebras allow to extend known connections with orthogonal polynomials to multiple variables. This was achieved in \cite{DeBie&DeClercq-2019} for the $q$-Bannai--Ito algebra. To be concrete, an action of the higher rank $q$-Bannai--Ito algebra on an abstract vector space was considered, leading to various orthonormal bases for this space. The overlap coefficients between such bases turned out to be multivariable $(-q)$-Racah polynomials, the truncated analogs of Askey--Wilson polynomials. This has allowed to construct a realization of the higher rank $q$-Bannai--Ito algebra with Iliev's $q$-difference operators \cite{Iliev-2011}, which have thereby obtained an algebraic interpretation.

The construction of ${\rm AW}(n)$ is rather intricate: in \cite{DeBie&DeClercq&vandeVijver-2018} we have outlined an algorithm which repeatedly applies the $\Uqsl$-coproduct $\Delta$ and a coaction $\tau$ to the Casimir element $\Lambda$, in a~specific order. This way we construct, as an extension of (\ref{rank one 12-23})--(\ref{rank one 13-12}), an element $\Lambda_A\in\Uqsl^{\otimes n}$ for each $A\subseteq \{1,\dots,n\}$. In this paper we rephrase this extension algorithm in more accessible notation and provide alternative construction methods for the elements~$\Lambda_A$ which use a novel coaction. This new approach is of major use to derive algebraic identities in ${\rm AW}(n)$, as we showcase in Theorems~\ref{thm - commutation general} and~\ref{thm - most general algebra relations} by significantly generalizing the algebraic relations given in~\cite{DeBie&DeClercq&vandeVijver-2018}. The main achievement of this paper is hence a general criterion for two generators $\Lambda_A$ and $\Lambda_B$ of ${\rm AW}(n)$ to commute or to satisfy a relation of the form
\begin{gather}\label{general relation}
[\Lambda_A,\Lambda_B]_q = \big(q^{-2}-q^2\big)\Lambda_{(A\cup B)\setminus(A\cap B)} + \big(q-q^{-1}\big)\big(\Lambda_{A\cap B}\Lambda_{A\cup B} + \Lambda_{A\setminus(A\cap B)}\Lambda_{B\setminus(A\cap B)}\big).\!\!\!
\end{gather}
More precisely, we show in Theorem \ref{thm - commutation general} that
\[
[\Lambda_A,\Lambda_B] = 0 \qquad \mathrm{if}\quad B\subseteq A,
\]
and in Theorem \ref{thm - most general algebra relations} we prove that the relation (\ref{general relation}) is satisfied for
\begin{alignat}{3}
& A= A_1 \cup A_2 \cup A_4, \qquad && B = A_2 \cup A_3, &\nonumber\\
& A= A_2 \cup A_3, \qquad && B = A_1 \cup A_3 \cup A_4,& \nonumber\\
& A= A_1 \cup A_3 \cup A_4, \qquad && B = A_1 \cup A_2 \cup A_4,&\label{condition on sets}
\end{alignat}
where $A_1, A_2, A_3, A_4 \subseteq\{1,\dots,n\}$ are such that for each $i\in\{1,2,3\}$ one has either $\max(A_i) < \min(A_{i+1})$ or $A_i = \varnothing$ or $A_{i+1} = \varnothing$.

It is not clear at this point whether these relations define the algebra ${\rm AW}(n)$ abstractly, or, in case the answer turns to be negative, which supplementary relations should be added in order to attain this purpose. However, calculations with computer algebra packages suggest that the condition (\ref{condition on sets}) describes the most general situation for the relations~(\ref{general relation}) to be satisfied.

Our methods are elementary and intrinsic: they are independent of the expressions for the coactions and the $\Uqsl$-Casimir $\Lambda$, and only recur to natural algebraic properties like coassociativity and the cotensor product property. As a consequence, the results of this paper are equally applicable to the higher rank $q$-Bannai--Ito algebra of \cite{DeBie&DeClercq&vandeVijver-2018}, without any modification.

The paper is organized as follows. In Section \ref{Section - Defining the higher rank generators} we construct the higher rank Askey--Wilson algebra ${\rm AW}(n)$ as a subalgebra of $\Uqsl^{\otimes n}$ through different extension processes, which we prove to be equivalent. Section \ref{Section - Main results} lists the main results of this paper and explains the general strategy of proof. Consequently, in Sections \ref{section - proof fundamental relations} and \ref{section - more commutation relations} we prove some intermediate results which will be relied on in Sections \ref{section - proof commutation general} and \ref{section - proof most general algebra relations}, where we prove Theorems \ref{thm - commutation general} and \ref{thm - most general algebra relations}. Finally in Section~\ref{section q-BI} we introduce similar extension processes to construct a~higher rank $q$-Bannai--Ito algebra as a~subalgebra of $\ospq^{\otimes n}$. We state two explicit theorems describing their algebraic relations.

\section{Defining the higher rank generators}\label{Section - Defining the higher rank generators}

Throughout this paper, we will work with the quantum group $\Uqsl$, which can be presented as the associative algebra over a field $\K$ with generators $E$, $F$, $K$ and $K^{-1}$, and relations
\[
KK^{-1} = K^{-1}K = 1, \qquad KE = q^2 EK, \qquad KF = q^{-2} FK, \qquad [E,F] = \frac{K-K^{-1}}{q-q^{-1}},
\]
where $q$ is a fixed parameter in $\mathbb{K}$, assumed not to be a root of unity. A Casimir element, which commutes with all elements of $\Uqsl$, is given by
\begin{gather}\label{Lambda def}
\Lambda = \big(q-q^{-1}\big)^2 EF + q^{-1} K + qK^{-1}.
\end{gather}

The quantum group $\Uqsl$ has the structure of a bialgebra: it is equipped with a coproduct $\Delta\colon \Uqsl\to\Uqsl^{\otimes 2}$, which satisfies the coassociativity property $(1\otimes\Delta)\Delta = (\Delta\otimes 1)\Delta$, and a~counit $\epsilon\colon \Uqsl\to\mathbb{K}$ satisfying $(1\otimes\epsilon)\Delta = (\epsilon\otimes 1)\Delta = 1$, where $1$ denotes the identity mapping on $\Uqsl$. Explicitly, they are given by
\begin{gather}\label{Coproduct}
\Delta(E) = E\otimes 1 + K\otimes E, \qquad\! \Delta(F) = F\otimes K^{-1} + 1\otimes F, \qquad\! \Delta\big(K^{\pm 1}\big) = K^{\pm 1}\otimes K^{\pm 1}, \!\! \\
%\label{Counit}
\epsilon(E) = \epsilon(F) = 0, \qquad \epsilon(K) = \epsilon\big(K^{-1}\big) = 1.\nonumber
\end{gather}

A binary operation we will often use is the so-called $q$-commutator. For $X,Y\in\Uqsl$ we write
\[
[X,Y]_q = qXY - q^{-1}YX.
\]

For $i$ and $j$ natural numbers with $i\leq j$, we will write $[i;j]$ to denote the discrete interval $\{i,i+1,\dots,j-1,j\}$. If we consider disjoint unions of discrete intervals, often denoted by $[i_1;j_1]\cup[i_2;j_2]\cup\dots\cup[i_k;j_k]$, it is always understood that $i_{\ell}\leq j_{\ell}<i_{\ell+1}-1$ for all $\ell$. Note that this implies that $i_k \geq i_1+2k-2$. Moreover, if $B$ is any set of natural numbers and $a\in\N$, we will write $B - a$ for the set $\{b-a\colon b\in B\}$.

\subsection{Coideals and comodules}

In \cite{DeBie&DeClercq&vandeVijver-2018} we have introduced the following $\Uqsl$-subalgebra.

\begin{Definition}We denote by $\I_R$ the subalgebra of $\Uqsl$ generated by $EK^{-1}$, $F$, $K^{-1}$ and $\Lambda$, and we define the algebra morphism $\tau_R\colon \I_R\to \Uqsl\otimes\I_R$ through its action on the generators:
\begin{gather*}
\tau_R\big(EK^{-1}\big) = K^{-1}\otimes EK^{-1}, \\
\tau_R(F) = K\otimes F - q^{-3}\big(q-q^{-1}\big)^2 F^2K \otimes EK^{-1} + q^{-1}\big(q+q^{-1}\big) FK \otimes K^{-1} - q^{-1} FK\otimes \Lambda, \\
\tau_R\big(K^{-1}\big) = 1\otimes K^{-1}-q^{-1}\big(q-q^{-1}\big)^2 F\otimes EK^{-1}, \\
\tau_R(\Lambda) = 1\otimes \Lambda.
\end{gather*}
\end{Definition}

It is readily checked that these definitions comply with the algebra relations in $\mathcal{I}_R$. The following important observation was made in \cite[Proposition~3]{DeBie&DeClercq&vandeVijver-2018}.

\begin{Proposition}The algebra $\mathcal{I}_R$ is a left coideal subalgebra of $U_q(\mathfrak{sl}_2)$ and a left $U_q(\mathfrak{sl}_2)$-comodule with coaction $\tau_R$. This means that $\Delta(\mathcal{I}_R) \subset \Uqsl\otimes\mathcal{I}_R$ and that one has
\begin{gather}\label{R comodule prop 1}
(1\otimes\tau_R)\tau_R = (\Delta\otimes 1)\tau_R, \\
%\label{R comodule prop 2}
(\epsilon\otimes 1)\tau_R = 1.\nonumber
\end{gather}
\end{Proposition}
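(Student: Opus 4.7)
The plan is to reduce every assertion to a check on the four algebra generators $EK^{-1}$, $F$, $K^{-1}$, $\Lambda$ of $\I_R$. Since $\Delta$, $\tau_R$, and $\epsilon$ are algebra morphisms, and since $\Uqsl\otimes\I_R$ and $\Uqsl\otimes\Uqsl\otimes\I_R$ are subalgebras of the corresponding tensor powers of $\Uqsl$, each identity in the statement is multiplicative in its argument and is therefore fully determined by its values on these generators.

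For the coideal inclusion $\Delta(\I_R)\subset\Uqsl\otimes\I_R$, applying the explicit formulas for the coproduct to $EK^{-1}$, $F$, $K^{-1}$ immediately yields sums whose right tensorands lie in $\{K^{-1},F,EK^{-1}\}$. The generator $\Lambda$ is the delicate case: expanding $\Delta(\Lambda)$ from (\ref{Lambda def}) initially produces terms $EF\otimes K^{-1}$ and $K\otimes EF$ whose right factor $EF$ does not lie in $\I_R$, as well as $K\otimes K$ contributions whose right factor $K$ is not in $\I_R$ either. I would substitute $(q-q^{-1})^2 EF = \Lambda - q^{-1}K - qK^{-1}$ to rewrite these; the crucial verification is that the resulting spurious $K\otimes K$ terms cancel against $q^{-1}\Delta(K)=q^{-1}K\otimes K$, so that after simplification every right tensorand lies in $\{K^{-1},\Lambda,F,EK^{-1}\}$.

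The counit identity $(\epsilon\otimes 1)\tau_R = 1$ is then immediate on each generator from $\epsilon(E)=\epsilon(F)=0$ and $\epsilon(K^{\pm 1})=1$. The coassociativity-type identity $(1\otimes\tau_R)\tau_R = (\Delta\otimes 1)\tau_R$ is the computational core. For $EK^{-1}$, $K^{-1}$, and $\Lambda$ the check is short, since $\tau_R$ has only one or two nontrivial summands on each. The case of $F$ is what I expect to be the main obstacle: $\tau_R(F)$ is a sum of four terms, so applying $1\otimes\tau_R$ produces a sprawling three-fold tensor, and the comparison side $(\Delta\otimes 1)\tau_R(F)$ requires expanding $\Delta(F^2K)$ and $\Delta(FK)$ through the bialgebra structure. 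Reconciling the two sides requires once again invoking $(q-q^{-1})^2 EF = \Lambda-q^{-1}K-qK^{-1}$ to push tensor factors into $\I_R$, and using the $q$-commutation $KF=q^{-2}FK$ to reorder factors. The verification is pure bookkeeping but extensive, and I would organize it by tracking the contribution to each resulting monomial tensor separately.

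A preliminary point I would address up front: before treating $\tau_R$ as a bona fide algebra morphism, one must confirm that its values on generators respect the defining relations of $\I_R$. The excerpt labels this as readily checked, so I would verify a representative relation to confirm well-posedness, and then invoke the morphism property throughout to promote all generator checks to identities on the whole of $\I_R$.
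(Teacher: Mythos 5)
Your proposal is correct and matches the paper's approach: the paper cites this result from \cite[Proposition~3]{DeBie&DeClercq&vandeVijver-2018} and proves the mirror statement for $\I_L$ by exactly the same reduction to generator checks, relying on all maps being algebra morphisms. Your identification of the delicate points --- the cancellation of the spurious $K\otimes K$ and $K^{-1}\otimes K^{-1}$ terms in $\Delta(\Lambda)$ via the Casimir substitution, and the lengthy bookkeeping for $(1\otimes\tau_R)\tau_R(F)$ versus $(\Delta\otimes 1)\tau_R(F)$ --- is accurate.
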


An interpretation of the coaction $\tau_R$ in terms of the universal $R$-matrix for $\Uqsl$ was recently given in \cite{Crampe&Vinet&Zaimi-2019}. This coaction was constructed so as to satisfy the identity
\begin{gather}
\label{Lambda 13 with tau_R}
\Lambda_{\{1,3\}} = (1\otimes\tau_R)\Delta(\Lambda),
\end{gather}
with $\Lambda_{\{1,3\}}$ defined through (\ref{Lambdas without 13}) and (\ref{rank one 12-23}). A similar mapping $\tau_L$ can be constructed by demanding that
\begin{gather}
\label{Lambda 13 with tau_L}
\Lambda_{\{1,3\}} = (\tau_L\otimes 1)\Delta(\Lambda).
\end{gather}
This suggests the following definition.

\begin{Definition}\label{I_L and tau_L def}We denote by $\I_L$ the subalgebra of $\Uqsl$ generated by $E$, $FK$, $K$ and $\Lambda$, and we define the algebra morphism $\tau_L\colon \I_L\to \I_L\otimes\Uqsl$ through its action on the generators:
\begin{gather*}
\tau_L(E) = E\otimes K, \\
\tau_L(FK) = FK\otimes K^{-1} - q^{-1}\big(q-q^{-1}\big)^2E\otimes F^2K + q\big(q+q^{-1}\big)K\otimes F - q\Lambda\otimes F, \\
\tau_L(K) = K\otimes 1 - q^{-1}\big(q-q^{-1}\big)^2E\otimes FK, \\
\tau_L(\Lambda) = \Lambda\otimes 1.
\end{gather*}
\end{Definition}

This subalgebra behaves in a similar fashion.

\begin{Proposition}The algebra $\mathcal{I}_L$ is a right coideal subalgebra of $U_q(\mathfrak{sl}_2)$ and a right $U_q(\mathfrak{sl}_2)$-comodule with coaction $\tau_L$.
\end{Proposition}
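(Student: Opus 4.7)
The plan is to establish this proposition in exact parallel to the $\mathcal{I}_R$ case, exploiting the mirror symmetry of $U_q(\mathfrak{sl}_2)$ under its antipode $S$. Recall that $S$ is an algebra anti-automorphism with $S(E) = -K^{-1}E$, $S(F) = -FK$, $S(K) = K^{-1}$, satisfying $\Delta\circ S = \sigma_{\mathrm{flip}}\circ(S\otimes S)\circ\Delta$ (where $\sigma_{\mathrm{flip}}$ swaps the tensor factors). A direct computation shows $S(EK^{-1}) = -E$, $S(F) = -FK$, $S(K^{-1}) = K$, and, since $\Lambda$ is central, $S(\Lambda) = \Lambda$. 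Therefore $S$ maps the generators of $\mathcal{I}_R$ (up to signs) onto those of $\mathcal{I}_L$, so $S$ restricts to an anti-algebra isomorphism $\mathcal{I}_R \to \mathcal{I}_L$.

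With this in hand, I would define a candidate coaction $\tilde{\tau}_L := \sigma_{\mathrm{flip}}\circ(S\otimes S)\circ\tau_R\circ S^{-1}$ and transfer the properties of $\tau_R$ through $S$. The relation $\Delta\circ S = \sigma_{\mathrm{flip}}(S\otimes S)\Delta$ converts the left-coideal condition $\Delta(\mathcal{I}_R)\subset U_q(\mathfrak{sl}_2)\otimes\mathcal{I}_R$ into the right-coideal condition $\Delta(\mathcal{I}_L)\subset\mathcal{I}_L\otimes U_q(\mathfrak{sl}_2)$. Likewise, applying $\sigma_{\mathrm{flip}}(S\otimes S\otimes S)$ to identity \eqref{R comodule prop 1} and using coassociativity produces $(\tilde{\tau}_L\otimes 1)\tilde{\tau}_L = (1\otimes\Delta)\tilde{\tau}_L$, and applying $S$ to the counit axiom yields $(1\otimes\epsilon)\tilde{\tau}_L = 1$ (using $\epsilon\circ S = \epsilon$).

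It then only remains to check on generators that $\tilde{\tau}_L$ coincides with the $\tau_L$ written down in Definition \ref{I_L and tau_L def}. For instance, on $K$ one computes $\tilde{\tau}_L(K) = \sigma_{\mathrm{flip}}(S\otimes S)\tau_R(K^{-1}) = \sigma_{\mathrm{flip}}\bigl(1\otimes K - q^{-1}(q-q^{-1})^2 FK\otimes E\bigr) = K\otimes 1 - q^{-1}(q-q^{-1})^2 E\otimes FK$, matching the stated formula; the checks for $\tilde{\tau}_L(E)$ and $\tilde{\tau}_L(\Lambda)$ are immediate. For $\tilde{\tau}_L(FK)$ one uses $FK = -S(EK^{-1}\cdot F) = -S(F)S(EK^{-1})\cdot(-1)$, or more directly proceeds by taking $F = -S^{-1}(FK)\cdot K^{-1}$-style substitutions and expanding four terms.

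As an alternative, one could simply verify the four assertions (well-definedness of $\tau_L$, the right coideal property, right comodule coassociativity, and the counit axiom) by direct computation on the generators $E$, $FK$, $K$, $\Lambda$, in complete analogy with Proposition~3 of \cite{DeBie&DeClercq&vandeVijver-2018}. The computations for $E$ and $K$ are immediate, $\Delta(FK) = FK\otimes 1 + K\otimes FK$ lies in $\mathcal{I}_L\otimes U_q(\mathfrak{sl}_2)$, and the centrality of $\Lambda$ reduces the verification on $\Lambda$ to the coproduct formulas. The principal obstacle in either route is the bookkeeping for the four-term expression $\tau_L(FK)$ when checking $(\tau_L\otimes 1)\tau_L(FK) = (1\otimes\Delta)\tau_L(FK)$: the Casimir term $-q\Lambda\otimes F$ generates several cross-terms that must collapse against those produced by $\tau_L(K)$ and $\tau_L(E)$, exactly mirroring the mechanism that makes the corresponding identity work for $\tau_R$.
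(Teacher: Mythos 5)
Your proposal is correct, and your primary route is genuinely different from the paper's. The paper's proof is a bare direct verification: it simply states that it suffices to check, generator by generator, that $\Delta(\mathcal{I}_L)\subset\mathcal{I}_L\otimes U_q(\mathfrak{sl}_2)$ and that $\tau_L$ preserves the algebra relations and satisfies \eqref{L comodule prop 1} and the counit axiom --- exactly your fallback route, with the same bookkeeping burden on the four-term expression $\tau_L(FK)$. Your antipode argument instead transfers everything from the already-established $\mathcal{I}_R$ case: since $S(EK^{-1})=-E$, $S(F)=-FK$, $S(K^{-1})=K$ and $S(\Lambda)=\Lambda$, the map $S$ restricts to an anti-isomorphism $\mathcal{I}_R\to\mathcal{I}_L$, and $\tilde\tau_L=\sigma_{\mathrm{flip}}(S\otimes S)\tau_R S^{-1}$ inherits multiplicativity, the coideal property, coassociativity and the counit axiom from \eqref{R comodule prop 1} by purely formal manipulations. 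I checked that $\tilde\tau_L$ does reproduce the formulas of Definition \ref{I_L and tau_L def} on all four generators (including $FK$, where $S^{-1}(FK)=-F$, $S(F^2K)=q^2F^2K$ and $S(FK)=-q^2F$ combine to give exactly the stated four-term expression). What this buys is that the one genuinely tedious verification --- coassociativity on $\tau_L(FK)$ --- never has to be redone; what it costs is the need to know (or prove) that $S$ is a coalgebra anti-homomorphism and invertible.

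Two small repairs are needed. First, ``$S(\Lambda)=\Lambda$ since $\Lambda$ is central'' is not a valid inference: the antipode preserves the center but need not fix its elements. Here it happens to hold because $S(\Lambda)=\big(q-q^{-1}\big)^2FE+q^{-1}K^{-1}+qK$, which equals $\Lambda$ by the relation $[E,F]=\big(K-K^{-1}\big)/\big(q-q^{-1}\big)$; include that one-line computation. Second, the manipulation ``$FK=-S\big(EK^{-1}\cdot F\big)\cdots$'' is garbled (indeed $S\big(EK^{-1}F\big)=FKE$); the clean statement is simply $S^{-1}(FK)=-F$, after which $\tilde\tau_L(FK)=-\sigma_{\mathrm{flip}}(S\otimes S)\tau_R(F)$ is a direct four-term expansion.
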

\begin{proof}It suffices to check explicitly on each of the generators that
\begin{enumerate}\itemsep=0pt
\item[1)] $\I_L$ is a right coideal of $\Uqsl$: $\Delta(\mathcal{I}_L) \subset \I_L\otimes\Uqsl$,
\item[2)] $\tau_L$ is a right coaction: it preserves the algebra relations in $\mathcal{I}_L$ and satisfies
\begin{gather}\label{L comodule prop 1}
(\tau_L\otimes 1)\tau_L = (1\otimes \Delta)\tau_L, \\
%\label{L comodule prop 2}
(1\otimes\epsilon)\tau_L = 1.\tag*{\qed}
\end{gather}
\end{enumerate}\renewcommand{\qed}{}
\end{proof}

It is readily checked that the element $\Delta(\Lambda)$ lies in $\mathcal{I}_L\otimes\mathcal{I}_R$. Indeed, it follows immediately from (\ref{Lambda def}) and (\ref{Coproduct}) that one has
\[
\Delta(\Lambda) = \Lambda\otimes K^{-1} + K\otimes\Lambda - \big(q+q^{-1}\big) K\otimes K^{-1} + \big(q-q^{-1}\big)^2 \big(E\otimes F+q^{-2} FK\otimes EK^{-1}\big).
\]

In the language of category theory \cite{Abrams&Weibel-2002}, the equality of (\ref{Lambda 13 with tau_R}) and (\ref{Lambda 13 with tau_L}) can be phrased as follows.

\begin{Corollary}\label{cor cotensor product}
The element $\Delta(\Lambda) \in\I_L\otimes\I_R$, with $\Lambda$ defined in~\eqref{Lambda def}, belongs to the cotensor product of the coideal comodule subalgebras $\I_L$ and $\I_R$ of $\Uqsl$:
\begin{gather}\label{cotensor product property}
(1\otimes\tau_R)\Delta(\Lambda) = (\tau_L\otimes 1)\Delta(\Lambda).
\end{gather}
\end{Corollary}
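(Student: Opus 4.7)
The plan is to verify the identity (\ref{cotensor product property}) by direct computation on the explicit closed-form expression for $\Delta(\Lambda)$ given just before the corollary. Both sides live in $\I_L\otimes\Uqsl\otimes\I_R$, so it suffices to expand each of the five summands of $\Delta(\Lambda)$ under $1\otimes\tau_R$ and under $\tau_L\otimes 1$ and check that the resulting expressions in $\Uqsl^{\otimes 3}$ coincide.

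Concretely, I would apply $1\otimes\tau_R$ summand by summand: the term $K\otimes\Lambda$ yields only $K\otimes 1\otimes\Lambda$; the terms $\Lambda\otimes K^{-1}$ and $-(q+q^{-1})K\otimes K^{-1}$ unfold via the two-term formula for $\tau_R(K^{-1})$; the term $(q-q^{-1})^2\, E\otimes F$ unfolds via the four-term formula for $\tau_R(F)$; and the term $q^{-2}(q-q^{-1})^2\, FK\otimes EK^{-1}$ unfolds via $\tau_R(EK^{-1})=K^{-1}\otimes EK^{-1}$ into a single monomial. I would then perform the analogous expansion of $(\tau_L\otimes 1)\Delta(\Lambda)$ using $\tau_L(\Lambda)=\Lambda\otimes 1$, the two-term formula for $\tau_L(K)$, the four-term formula for $\tau_L(FK)$, and $\tau_L(E)=E\otimes K$, and collect all terms according to the monomial appearing in the middle tensor slot.

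Matching is made transparent by the informal involution $E\leftrightarrow F$, $K\leftrightarrow K^{-1}$, $q\leftrightarrow q^{-1}$ (which sends $EK^{-1}$ to $FK$ and fixes $\Lambda$) relating the defining formulas of $\tau_R$ and $\tau_L$: this symmetry pairs the middle-slot monomials of the two expansions one-to-one with matching coefficients. For instance, the middle-slot contribution $F$ on the $\tau_R$ side, produced by $\tau_R(K^{-1})$ in the expansions of the first and third summands of $\Delta(\Lambda)$, is matched by the $\tau_L(FK)$ expansion of the last summand. The main obstacle is simply the bookkeeping, since each side produces on the order of ten monomials with overlapping coefficients, but no deeper algebraic input is needed beyond the defining formulas. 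Conceptually, the identity is the consistency check that $\tau_L$ was designed to pass, since Definition~\ref{I_L and tau_L def} was introduced precisely to enforce (\ref{Lambda 13 with tau_L}); the cotensor-product phrasing is then a reformulation via the equalizer definition of cotensor product in \cite{Abrams&Weibel-2002}.
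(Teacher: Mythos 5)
Your verification is correct: I have checked that expanding $(1\otimes\tau_R)\Delta(\Lambda)$ and $(\tau_L\otimes 1)\Delta(\Lambda)$ term by term over the five summands of the displayed formula for $\Delta(\Lambda)$ produces the same ten monomials in $\Uqsl^{\otimes 3}$ with identical coefficients, so the direct computation you outline does establish \eqref{cotensor product property}. The paper takes a slightly different route: it does not expand either side, but instead observes that both sides equal the single element $\Lambda_{\{1,3\}}$, since $\tau_R$ was built in the earlier work to satisfy \eqref{Lambda 13 with tau_R} and $\tau_L$ is defined precisely so that \eqref{Lambda 13 with tau_L} holds; the corollary is then immediate. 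The paper's argument is shorter on the page but hides the same amount of computation inside the verification of \eqref{Lambda 13 with tau_L}, and it requires the rank-one relation \eqref{rank one 12-23} and the explicit form of $\Lambda_{\{1,3\}}$ as intermediaries; your version is self-contained and compares the two coactions directly, at the cost of the bookkeeping you describe. One caution: the substitution $E\leftrightarrow F$, $K\leftrightarrow K^{-1}$, $q\leftrightarrow q^{-1}$ is not an algebra automorphism of $\Uqsl$ (it does not preserve the relation $KE=q^2EK$ without also reversing the order of multiplication), so it cannot be invoked as a rigorous symmetry that forces the two expansions to agree; since you only use it as a mnemonic for pairing monomials and carry out the expansion explicitly, this does not create a gap, but the actual proof content is the term-by-term match, not the involution.
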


\subsection{The right extension process} Our goal in this section will be to associate to each set $A\subseteq [1;n]$ an element $\Lambda_A\in\Uqsl^{\otimes n}$, which will serve as a generator for the higher rank Askey--Wilson algebra ${\rm AW}(n)$. For the empty set, this will simply be the scalar
\begin{gather}
\label{Lambda emptyset}
\Lambda_{\varnothing} = q+q^{-1}.
\end{gather}
For general $A$, a construction algorithm was given in \cite{DeBie&DeClercq&vandeVijver-2018}. We will repeat it here in a more accessible notation.

\begin{Definition}
\label{right extension process def}
For any set $A = \{a_1,\dots,a_m\}\subseteq [1;n]$, ordered such that $a_i < a_{i+1}$ for all $i$, we define $\Lambda_A\in\Uqsl^{\otimes n}$ by
\[
\Lambda_A = 1^{\otimes (a_1-1)}\otimes\left(\overrightarrow{\prod_{i = 2}^m}\, \mu^{A}_i\right)(\Lambda) \otimes 1^{\otimes (n-a_m)},
\]
with
\begin{gather}\label{mu def}
\mu_i^{A} = \left(\overrightarrow{\prod_{\ell = a_{i-1}-a_1+1}^{a_i-a_1-1}}\big(1^{\otimes\ell}\otimes\tau_R\big)\right)\big(1^{\otimes (a_{i-1}-a_1)}\otimes\Delta\big),
\end{gather}
where it is understood that the term between brackets in (\ref{mu def}) is absent if $a_{i} = a_{i-1}+1$.
\end{Definition}
\begin{Example}\label{Example right}
If $n = 9$ and $A = \{2,4,5,8\}$, then we have
\[
\Lambda_{\{2,4,5,8\}} = 1 \otimes \big(\mu_4^{\{2,4,5,8\}}\mu_3^{\{2,4,5,8\}}\mu_2^{\{2,4,5,8\}}\big)(\Lambda)\otimes 1,
\]
with $\mu_2^{\{2,4,5,8\}} = (1\otimes\tau_R)\Delta$, $\mu_3^{\{2,4,5,8\}} = 1^{\otimes 2}\otimes\Delta$ and $\mu_4^{\{2,4,5,8\}} = \big(1^{\otimes 5}\otimes\tau_R\big) \big(1^{\otimes 4}\otimes\tau_R\big)\big(1^{\otimes 3}\otimes\Delta\big)$.
\end{Example}

\looseness=-1 The rationale behind this construction is that each element of $A$, except for its minimum $a_1$, corresponds to an application of $\Delta$, whereas the coaction $\tau_R$ is used to \emph{create the gaps} between the elements of $A$. In the example above, $\tau_R$ is applied first once and then twice, corresponding to a \emph{hole} of size 1 between 2 and 4 and one of size 2 between 5 and 8. The improvement with respect to the notation of~\cite{DeBie&DeClercq&vandeVijver-2018} lies in the fact that here we iterate over the elements of the set~$A$ rather than over all elements of~$[1;n]$, such that we can avoid distinguishing several cases as in~\cite{DeBie&DeClercq&vandeVijver-2018}.

\begin{Definition}
The Askey--Wilson algebra of rank $n-2$, denoted ${\rm AW}(n)$, is the subalgebra of $\Uqsl^{\otimes n}$ generated by all $\Lambda_A$ with $A\subseteq [1;n]$.
\end{Definition}

We will refer to the algorithm described in Definition \ref{right extension process def} as the right extension process, as to make the distinction with the following, alternative construction method.

\subsection{The left and mixed extension processes}

An alternative method to associate to each $A\subseteq[1;n]$ an element of $\Uqsl^{\otimes n}$ uses the left coideal comodule subalgebra $\I_L$ and its coaction $\tau_L$, as introduced in Definition \ref{I_L and tau_L def}.

\begin{Definition}
\label{left extension process def}
For any set $A = \{a_1,\dots,a_m\}\subseteq[1;n]$, ordered such that $a_i < a_{i+1}$ for all $i$, we define $\widehat{\Lambda}_A\in\Uqsl^{\otimes n}$ by
\[
\widehat{\Lambda}_A = 1^{\otimes (a_1-1)}\otimes \left(\overleftarrow{\prod_{i = 1}^{m-1}}\widehat{\mu}_i^{A} \right)(\Lambda)\otimes1^{\otimes (n-a_m)},
\]
with
\begin{gather}
\label{mu hat def}
\widehat{\mu}_i^{A} = \left(\overrightarrow{\prod_{\ell = a_m-a_{i+1}+1}^{a_m-a_i-1}}\big(\tau_L\otimes 1^{\otimes\ell}\big)\right)\big(\Delta\otimes 1^{\otimes (a_m-a_{i+1})}\big),
\end{gather}
where of course the term between brackets is absent if $a_{i+1} = a_i + 1$.
\end{Definition}
\begin{Example}
As before we take $n = 9$ and $A = \{2,4,5,8\}$, and find
\[
\widehat{\Lambda}_{\{2,4,5,8\}} = 1 \otimes \big(\widehat{\mu}_1^{\{2,4,5,8\}}\widehat{\mu}_2^{\{2,4,5,8\}}\widehat{\mu}_3^{\{2,4,5,8\}}\big)(\Lambda)\otimes 1,
\]
with $\widehat{\mu}_3^{\{2,4,5,8\}} = \big(\tau_L\otimes 1^{\otimes 2}\big)(\tau_L\otimes 1)\Delta$, $\widehat{\mu}_2^{\{2,4,5,8\}} = \Delta\otimes 1^{\otimes 3}$ and $\widehat{\mu}_1^{\{2,4,5,8\}} = \big(\tau_L\otimes 1^{\otimes 5}\big)\big(\Delta\otimes 1^{\otimes 4}\big)$.
\end{Example}

Again, the idea is that each element of $A$ but the maximum $a_m$ corresponds to an application of $\Delta$, whereas $\tau_L$ \emph{creates the holes}. However, as opposed to Definition \ref{right extension process def}, we now run through the elements of $A$ in decreasing order, from right to left. This is why we refer to the algorithm of Definition \ref{left extension process def} as the left extension process.

Our first major task will be to prove the equivalence of the right and left extension processes, i.e., to show that they produce the same elements, for each set $A$. To do so, it will often be needed to switch the order of certain algebra morphisms which act on mutually disjoint tensor product positions. More precisely, if $X\in\Uqsl^{\otimes 2}$ and $\varphi,\psi\colon \Uqsl\to\Uqsl^{\otimes 2}$, then we have the following basic property:
\begin{gather}\label{Lemma 1.2.1 eq}
(1\otimes1\otimes\psi)(\varphi\otimes1)X = (\varphi\otimes1\otimes 1)(1\otimes\psi)X.
\end{gather}

\begin{Remark}
\label{Remark comodules}
This property also allows to replace the definitions (\ref{mu def}) and (\ref{mu hat def}) of~$\mu_i^{A}$ and~$\widehat{\mu}_i^{A}$ by certain equivalent expressions. For example, $\mu_2^{\{1,5\}} = \big(1^{\otimes 3}\otimes\tau_R\big)\big(1^{\otimes 2}\otimes\tau_R\big)(1\otimes\tau_R)\Delta$. Invoking~(\ref{R comodule prop 1}) and (\ref{Lemma 1.2.1 eq}), we have
\begin{gather*}
\big(1^{\otimes2}\otimes\tau_R\big)(1\otimes\tau_R)\tau_R = \big(1^{\otimes 2}\otimes\tau_R\big)(\Delta\otimes 1)\tau_R \\
\hphantom{\big(1^{\otimes2}\otimes\tau_R\big)(1\otimes\tau_R)\tau_R}{} = \big(\Delta\otimes 1^{\otimes 2}\big)(1\otimes\tau_R)\tau_R = \big(\Delta\otimes 1^{\otimes 2}\big)(\Delta\otimes 1)\tau_R,
\end{gather*}
which allows to rewrite $\mu_2^{\{1,5\}}$, and of course this applies to any morphism of the form $\mu_i^{A}$ with $a_i-a_{i-1}>2$. Similarly, one finds from~(\ref{L comodule prop 1}) and~(\ref{Lemma 1.2.1 eq}) that
\[
\big(\tau_L\otimes 1^{\otimes 2}\big)(\tau_L\otimes 1)\tau_L = \big(1^{\otimes 2}\otimes \Delta\big)(1\otimes\Delta)\tau_L
\]
and its generalizations for any $\widehat{\mu}_{i}^{A}$ with $a_{i+1}-a_i>2$.
\end{Remark}

This observation will now help us show the equivalence of both extension processes.

\begin{Proposition}\label{thm - left and right extension process}The right and left extension processes produce exactly the same generators: for each $A\subseteq [1;n]$ one has $\Lambda_A = \widehat{\Lambda}_A$.
\end{Proposition}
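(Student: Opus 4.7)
I would prove the claim by induction on $m := |A|$. The base case $m \leq 1$ is immediate: both Definitions \ref{right extension process def} and \ref{left extension process def} involve empty products, so both sides reduce to $1^{\otimes(a_1-1)} \otimes \Lambda \otimes 1^{\otimes(n-a_1)}$.

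The heart of the argument is the case $m = 2$. Setting $g = a_2 - a_1 - 1$, the claim reduces (up to the trivial identity factors) to
\[
\big(1^{\otimes g}\otimes\tau_R\big)\cdots(1\otimes\tau_R)\Delta(\Lambda) = \big(\tau_L\otimes 1^{\otimes g}\big)\cdots(\tau_L\otimes 1)\Delta(\Lambda).
\]
I would establish this by a secondary induction on $g$: the case $g = 0$ is trivial, and $g = 1$ is precisely the cotensor product identity (\ref{cotensor product property}). For $g \geq 2$, combine the induction hypothesis with repeated applications of the disjoint-support commutation (\ref{Lemma 1.2.1 eq}) to pull the outermost $\tau_R$ (which acts on the rightmost slot) past every $\tau_L$ in the accumulated chain on the left (each acting on the leftmost slot, hence disjoint from $\tau_R$). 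This eventually places the $\tau_R$ in innermost position $(1\otimes\tau_R)\Delta(\Lambda)$, which the cotensor identity converts into $(\tau_L\otimes 1)\Delta(\Lambda)$, yielding exactly the left-extension expression.

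For the inductive step $m \geq 3$, I would bring $\Lambda_A$ and $\widehat{\Lambda}_A$ to a common form by invoking Remark \ref{Remark comodules}: using (\ref{R comodule prop 1}) together with (\ref{Lemma 1.2.1 eq}), each $\mu_i^A$ can be rewritten so that only a single outermost $\tau_R$ survives per non-trivial gap, with all other coactions converted into iterated coproducts via the comodule law. An analogous normalization of $\widehat{\mu}_i^A$ via (\ref{L comodule prop 1}) leaves one $\tau_L$ per gap on the left side. Each surviving coaction then acts on a tensor slot that traces back to the right (resp.\ left) factor of an embedded $\Delta(\Lambda) \in \mathcal{I}_L \otimes \mathcal{I}_R$, so applying (\ref{cotensor product property}) locally at every such factor swaps $\tau_R$ for $\tau_L$ and thereby transforms the right process into the left process.

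The principal obstacle I foresee is the tensor-position bookkeeping: at each rewriting step one must verify that the operation being moved or swapped acts on a slot with the correct coideal structure, and that every invocation of the cotensor identity is applied to an intact embedded $\Delta(\Lambda)$-block. This is handled by systematic use of (\ref{Lemma 1.2.1 eq}) to commute non-overlapping morphisms past one another, combined with coassociativity of $\Delta$ and the coideal properties of $\mathcal{I}_L$ and $\mathcal{I}_R$; these ensure that intermediate tensors always retain an $\mathcal{I}_L$ factor on the far left and an $\mathcal{I}_R$ factor on the far right, where the remaining coactions can legitimately be applied.
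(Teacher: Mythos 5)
Your proposal is correct in substance but organizes the induction differently from the paper. The paper inducts on the number of discrete intervals of $A$: it peels off the last interval, uses the induction hypothesis to write $\Lambda_{\widehat{A}}$ (the set with the last gap closed up) in left-process form as a chain of morphisms that leave the final tensor slot untouched, shifts the new right-process morphisms past that chain via (\ref{Lemma 1.2.1 eq}), and is thereby reduced to a single computation on $\Delta(\Lambda)$ that the cotensor identity (\ref{cotensor product property}) resolves. You instead settle the two-element case by a secondary induction on the gap size --- a clean, self-contained version of the computation the paper performs inside its inductive step --- and for general $A$ you normalize both processes via Remark~\ref{Remark comodules} to a form with one surviving coaction per gap (essentially the content of Proposition~\ref{prop derived}) and then convert the coactions. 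Both routes rest on the same three pillars (coassociativity, the disjoint-support commutation (\ref{Lemma 1.2.1 eq}), and the cotensor identity), so the difference is one of bookkeeping strategy rather than of ideas; your version makes the role of the two-element case more transparent, while the paper's interval-peeling keeps each step local. A small slip: by (\ref{R comodule prop 1}) the $\tau_R$ that survives the normalization of each chain is the \emph{innermost} one, not the outermost.

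The one step that needs more care is your claim that one can ``apply (\ref{cotensor product property}) locally at every such factor.'' That identity holds for the element $\Delta(\Lambda)$ itself, not for $\Delta(x)$ with $x$ an arbitrary element of $\I_L\cap\I_R$, so once earlier morphisms have split the outer slots of $\Delta(\Lambda)$, the slot on which the next surviving $\tau_R$ acts is no longer the second factor of a literal copy of $\Delta(\Lambda)$ and the identity cannot be invoked in place. The conversion must instead proceed sequentially from the innermost coaction outward: after all previously treated morphisms have been rewritten so as to act on the left-hand slots, the next $\tau_R$ (acting on the last slot) commutes via (\ref{Lemma 1.2.1 eq}) down to the core, where --- after re-bracketing the accumulated coproducts by coassociativity --- the last slot \emph{is} the second factor of $\Delta(\Lambda)$, and only there does (\ref{cotensor product property}) apply. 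This is precisely what your $g$-induction does for $m=2$, so the remedy is to run the same inside-out scheme for all surviving coactions rather than a simultaneous local swap; note also that, once this is done, the induction on $|A|$ does no work in the step $m\ge 3$, which becomes a direct argument.
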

\begin{proof}Without loss of generality, we can assume that $\min(A) = 1$ and $\max(A) = n$, since otherwise it suffices to add $1$ in the remaining positions. We write $A = [1;j_1]\cup[i_2;j_2]\cup\dots\cup[i_m;n]$ and proceed by induction on $m$. The case $m = 1$ follows from coassociativity:
\[
\Lambda_{[1;n]} = \big(1^{\otimes (n-2)}\otimes\Delta\big)\cdots(1\otimes\Delta)\Delta(\Lambda)= \big(\Delta\otimes 1^{\otimes (n-2)}\big)\cdots(\Delta\otimes 1)\Delta(\Lambda) = \widehat{\Lambda}_{[1;n]}.
\]
Suppose now the claim is true for all sets consisting of $m-1$ discrete intervals, including
$\widehat{A} = [1;j_1]\cup[i_2;j_2]\cup\dots\cup[i_{m-1};j_{m-1}+1]$: $\Lambda_{\widehat{A}} = \widehat{\Lambda}_{\widehat{A}}$. The right extension process asserts
\begin{gather}\label{left extension IH}
\Lambda_A = \big(1^{\otimes (n-2)}\otimes\Delta\big)\cdots\big(1^{\otimes (i_m-1)}\otimes\Delta\big)\big(1^{\otimes (i_{m}-2)}\otimes\tau_R\big)\cdots\big(1^{\otimes j_{m-1}}\otimes\tau_R\big)\Lambda_{\widehat{A}}.
\end{gather}
On the other hand, by the left extension process we have
\begin{gather}\label{Gamma of intermediary set}
\Lambda_{\widehat{A}} = \widehat{\Lambda}_{\widehat{A}} = \big(\alpha_{j_{m-1}-1}\otimes 1^{\otimes (j_{m-1}-1)}\big)\cdots(\alpha_1\otimes 1)\Delta(\Lambda),
\end{gather}
with each $\alpha_i \in \{\Delta, \tau_L\}$. When combining~(\ref{left extension IH}) and~(\ref{Gamma of intermediary set}), it is clear that the second tensor product position in $\Delta(\Lambda)$ is left invariant by all the $\alpha_i$. Hence by~(\ref{Lemma 1.2.1 eq}) we may shift the morphisms in~(\ref{left extension IH}) through those in (\ref{Gamma of intermediary set}), such that
\begin{gather}\label{Lambda A and B}
\Lambda_A = \big(\alpha_{j_{m-1}-1}\otimes 1^{\otimes (n-2)}\big)\cdots\big(\alpha_1\otimes 1^{\otimes (n-j_{m-1})}\big)\Lambda_B,
\end{gather}
with
\[
\Lambda_B = \big(1^{\otimes (n-j_{m-1}-1)}\otimes\Delta\big)\cdots\big(1^{\otimes (i_m-j_{m-1})}\otimes\Delta\big)\big(1^{\otimes (i_{m}-j_{m-1}-1)}\otimes\tau_R\big)\cdots(1\otimes\tau_R)\Delta(\Lambda).
\]
By (\ref{cotensor product property}) and (\ref{Lemma 1.2.1 eq}) we have
\[
(1\otimes 1\otimes\tau_R)(1\otimes\tau_R)\Delta(\Lambda) = (\tau_L\otimes 1\otimes 1)(1\otimes\tau_R)\Delta(\Lambda) = (\tau_L\otimes 1\otimes 1)(\tau_L\otimes 1)\Delta(\Lambda).
\]
Repeating this, we find
\[
\Lambda_B = \big(1^{\otimes (n-j_{m-1}-1)}\otimes\Delta\big)\cdots\big(1^{\otimes (i_m-j_{m-1})}\otimes\Delta\big)\big(\tau_L\otimes 1^{\otimes (i_{m}-j_{m-1}-1)}\big)\cdots(\tau_L\otimes 1)\Delta(\Lambda).
\]
Invoking (\ref{Lemma 1.2.1 eq}) again, we may shift all the $1^{\otimes\ell}\otimes\Delta$ through all the $\tau_L\otimes 1^{\otimes m}$, which, after using coassociativity, gives
\begin{gather*}
\Lambda_B = \big(\tau_L\otimes 1^{\otimes (n-j_{m-1}-1)}\big)\cdots\big(\tau_L\otimes 1^{\otimes(n-i_m+1)}\big)\big(\Delta\otimes 1^{\otimes (n-i_m)}\big)\cdots(\Delta\otimes 1)\Delta(\Lambda) \\
\hphantom{\Lambda_B}{}= \widehat{\Lambda}_{\{1\}\cup[i_m-j_{m-1}+1;n-j_{m-1}+1]}.
\end{gather*}
Moreover, the left extension process and (\ref{Gamma of intermediary set}) imply that
\[
\widehat{\Lambda}_A = \big(\alpha_{j_{m-1}-1}\otimes 1^{\otimes (n-2)}\big)\cdots\big(\alpha_1\otimes 1^{\otimes (n-j_{m-1})}\big)\widehat{\Lambda}_{\{1\}\cup[i_m-j_{m-1}+1;n-j_{m-1}+1]}.
\]
The statement now follows from (\ref{Lambda A and B}).
\end{proof}

The reasoning established in the proof of Proposition~\ref{thm - left and right extension process} suggests the existence of several other, so-called mixed extension processes, which produce the same elements $\Lambda_A$. To be precise, one can split the set $A$ at any $a_j$ and perform the right extension process for the subset $\{a_{j+1},\dots,a_m\}$ and consequently the left extension process for $\{a_1,\dots,a_{j-1}\}$. This is described in the following definition.

\begin{Definition}
\label{mixed extension process def}
For any set $A= \{a_1,\dots,a_m\}\subseteq [1;n]$, ordered such that $a_i < a_{i+1}$ for all $i$, we define $\Lambda_A^{(j)}\in\Uqsl^{\otimes n}$ by
\[
\Lambda_A^{(j)} = 1^{\otimes (a_1-1)}\otimes\left(\overleftarrow{\prod_{i = 1}^{j-1}}\widehat{\mu}_i^{A}\right)\left(\overrightarrow{\prod_{i = j+1}^{m}\mu_{i,j}^{A}}\right)(\Lambda)\otimes 1^{\otimes (n-a_m)},
\]
with $\widehat{\mu}_i^{A}$ as in (\ref{mu hat def}) and
\[
\mu_{i,j}^{A} = \left(\overrightarrow{\prod_{\ell = a_{i-1}-a_j+1}^{a_i-a_j-1}}\big(1^{\otimes\ell}\otimes\tau_R\big)\right)\big(1^{\otimes (a_{i-1}-a_j)}\otimes\Delta\big).
\]
\end{Definition}

By definition, we have $\Lambda_A^{(1)}=\Lambda_A$ and $\Lambda_A^{(m)} = \widehat{\Lambda}_A$ and following the proof of Proposition~\ref{thm - left and right extension process}, we even have $\Lambda_A^{(j)} = \Lambda_A$ for all $j\in\{1,\dots,m\}$.

\begin{Example}
For $n = 9$, $A = \{2,4,5,8\}$ and $j = 2$ we have
\[
\Lambda_{\{2,4,5,8\}}^{(2)} = 1\otimes \big(\widehat{\mu}_1^{\{2,4,5,8\}}\mu_{4,2}^{\{2,4,5,8\}}\mu_{3,2}^{\{2,4,5,8\}}\big)(\Lambda)\otimes 1,
\]
with $\mu_{3,2}^{\{2,4,5,8\}} = \Delta$, $\mu_{4,2}^{\{2,4,5,8\}} = \big(1^{\otimes 3}\otimes\tau_R\big)\big(1^{\otimes 2}\otimes\tau_R\big)(1\otimes\Delta) $ and $\widehat{\mu}_1^{\{2,4,5,8\}} = \big(\tau_L\otimes 1^{\otimes 5}\big)\big(\Delta\otimes 1^{\otimes 4}\big)$.
\end{Example}

\subsection{Another layer of freedom}

In the extension processes described above, the order in which we apply the different morphisms is of high importance. Nevertheless, there is some additional freedom in this order, which will come in handy in many of the following proofs. More precisely, the upcoming Proposition~\ref{prop derived} asserts that when constructing $\Lambda_A$, it suffices to first create all the holes between elements of $A$ in order of appearance and then enlarge all holes and all intervals by applying repeatedly the coproduct $\Delta$.

\begin{Proposition}\label{prop derived}Let $A = [1;j_1]\cup[i_2;j_2]\cup\dots\cup[i_k;j_k]$, then one has
\begin{gather}\label{prop derived TBP}
\Lambda_A = \left(\overleftarrow{\prod_{n=0}^{2k-2}}\overrightarrow{\prod_{\ell=\alpha_{n,\mathbf{i},\mathbf{j}}}^{\beta_{n,\mathbf{i},\mathbf{j}}}}(1^{\otimes n}\otimes\Delta\otimes 1^{\otimes \ell})\right)\Lambda_{\{1,3,5,\dots,2k-1\}},
\end{gather}
where \[
\alpha_{n,\mathbf{i},\mathbf{j}} =
\begin{cases}
j_k-j_m & \text{if}\ n = 2m-2\ \text{is even}, \\
j_k-i_m+1 & \text{if}\ n = 2m-3\ \text{is odd}
\end{cases}
\]
and
\[
\beta_{n,\mathbf{i},\mathbf{j}} =
\begin{cases}
j_k-i_m-1 & \text{if}\ n = 2m-2\ \text{is even}, \\
j_k-j_{m-1}-2 & \text{if}\ n = 2m-3 \ \text{is odd}
\end{cases}
\]
and where we set $\mathbf{i} = (1,i_2,\dots,i_k)$ and $\mathbf{j} = (j_1,j_2,\dots,j_k)$.
\end{Proposition}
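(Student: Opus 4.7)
My approach is to reorganize the right extension $\Lambda_A=\mu_m^A\cdots\mu_2^A(\Lambda)$ using three tools: coassociativity, the disjoint-position shift identity~(\ref{Lemma 1.2.1 eq}), and an iterated form of the identity in Remark~\ref{Remark comodules}, which I would first establish by induction on $h$ by alternately applying (\ref{R comodule prop 1}) and (\ref{Lemma 1.2.1 eq}):
\begin{gather*}
\big(1^{\otimes(a+h-1)}\otimes\tau_R\big)\cdots\big(1^{\otimes a}\otimes\tau_R\big) = \big(1^{\otimes a}\otimes\Delta\otimes 1^{\otimes(h-1)}\big)\cdots\big(1^{\otimes a}\otimes\Delta\otimes 1\big)\big(1^{\otimes a}\otimes\tau_R\big).
\end{gather*}
This identity, valid on elements whose last tensor factor lies in $\I_R$, expresses a chain of $h$ consecutive applications of $\tau_R$ in terms of a single $\tau_R$ followed by $h-1$ new $\Delta$'s at a common position.

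I then apply this identity to the chain of $h_{i'}=a_i-a_{i-1}-1$ copies of $\tau_R$ inside each gap-bridging morphism $\mu_i^A$ (i.e.\ those with $a_i>a_{i-1}+1$), producing for each gap one remaining $\tau_R$ together with $h_{i'}-1$ ``extra'' $\Delta$'s. Using~(\ref{Lemma 1.2.1 eq}) repeatedly, I commute these extras, as well as the $\Delta$'s arising from the interval-internal $\mu_i^A$'s (those with $a_i=a_{i-1}+1$), past all subsequent morphisms in the composition until they all appear outermost. Each such shift is legitimate because a shifted $\Delta$ always acts at a tensor position strictly below the positions of the morphisms it passes through. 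What remains in the middle is exactly one $\Delta$ and one $\tau_R$ per gap of $A$, and careful tracking of how the shifts decrease their positions shows that these $k-1$ surviving pairs end up acting at positions $(2i'-1,2i')$ for $i'=1,\dots,k-1$. This is precisely the right extension of the skeleton $\Lambda_{\{1,3,\dots,2k-1\}}$.

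The main obstacle is the position-and-$\ell$ bookkeeping for the outer $\Delta$'s after the iterated shifts, which I would tame by induction on $k$. The base case $k=1$ reduces to iterated coassociativity on the right extension of $\Lambda_{[1;j_1]}$. For the inductive step, let $A'=[1;j_1]\cup\dots\cup[i_{k-1};j_{k-1}]$. The operations of $\Lambda_A$ beyond $\Lambda_{A'}$ are precisely the $k$-th gap-bridging morphism $\mu_{|A'|+1}^A$ and the $(j_k-i_k)$ interval-internal $\Delta$'s of $[i_k;j_k]$. After applying the rewriting and shifts to these, the inductive outer $\Delta$'s for blocks $n\le 2k-4$ have all their $\ell$-values uniformly increased by $j_k-j_{k-1}$, matching the ranges $[\alpha_{n,\mathbf{i},\mathbf{j}},\beta_{n,\mathbf{i},\mathbf{j}}]$ of~(\ref{prop derived TBP}); meanwhile the $(i_k-j_{k-1}-2)$ new extras from $\mu_{|A'|+1}^A$ and the $(j_k-i_k)$ last-interval $\Delta$'s fill exactly the ranges prescribed at $n=2k-3$ and $n=2k-2$, as a direct computation from the definitions of $\alpha$ and $\beta$ confirms.
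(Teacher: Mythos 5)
Your proposal follows essentially the same route as the paper's proof: induction on $k$, with the chain of $\tau_R$'s in each gap flattened via the comodule property (your iterated form of Remark~\ref{Remark comodules}) into a single $\tau_R$ together with extra coproducts, everything then reordered via~(\ref{Lemma 1.2.1 eq}) and coassociativity onto the skeleton $\Lambda_{\{1,3,\dots,2k-1\}}$, and the same bookkeeping check that the inherited blocks shift uniformly by $j_k-j_{k-1}$ while the $i_k-j_{k-1}-2$ gap extras and the $j_k-i_k$ last-interval coproducts fill the $n=2k-3$ and $n=2k-2$ ranges of~(\ref{prop derived TBP}). The only imprecision is that not every reordering is a disjoint-position shift — the coproducts internal to one interval, and the new gap coproduct against the $n=2k-4$ block of the induction hypothesis, act on nested positions and must be reordered by coassociativity rather than by~(\ref{Lemma 1.2.1 eq}) — but you list coassociativity among your tools and the paper resolves these points in exactly the same way, so this is presentational rather than a gap.
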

\begin{proof}By induction on $k$, the case $k = 1$ being trivial by coassociativity. Suppose hence the claim holds for all sets consisting of $k-1$ discrete intervals, including $\widehat{A} =[1;j_1]\cup[i_2;j_2]\cup\dots\cup[i_{k-1};j_{k-1}]$:
\begin{gather}\label{derived IH}
\Lambda_{\widehat{A}} = \left(\overleftarrow{\prod_{n=0}^{2k-4}}\overrightarrow{\prod_{\ell= \alpha_{n,\widetilde{\mathbf{i}},\widetilde{\mathbf{j}}}}^{\beta_{n,\widetilde{\mathbf{i}},\widetilde{\mathbf{j}}}}}\big(1^{\otimes n}\otimes\Delta\otimes 1^{\otimes \ell}\big)\right)\Lambda_{\{1,3,5,\dots,2k-3\}},
\end{gather}
where $\widetilde{\mathbf{i}} = (1,i_2,\dots,i_{k-1})$ and $\widetilde{\mathbf{j}} = (j_1,j_2,\dots,j_{k-1})$.
From the right extension process, using coassociativity and Remark~\ref{Remark comodules}, it is clear that
\begin{gather}
\Lambda_A = \big(1^{\otimes (i_{k}-1)}\otimes\Delta\otimes 1^{\otimes (j_k-i_k-1)}\big)\cdots\big(1^{\otimes (i_{k}-1)}\otimes\Delta\big)\big(1^{\otimes j_{k-1}}\otimes\Delta\otimes 1^{\otimes (i_k-j_{k-1}-2)}\big)\cdots\nonumber\\
\hphantom{\Lambda_A =}{}\times \big(1^{\otimes j_{k-1}}\otimes\Delta\otimes 1\big)\big(1^{\otimes j_{k-1}}\otimes\tau_R\big)\big(1^{\otimes(j_{k-1}-1)}\otimes\Delta\big)\Lambda_{\widehat{A}}.\label{derived right extension procedure}
\end{gather}
Observe that when combining (\ref{derived IH}) and (\ref{derived right extension procedure}), the morphisms in (\ref{derived IH}) with $n \leq 2k-5$ will act on tensor product positions in $\Lambda_{\{1,3,5,\dots,2k-3\}}$ of lower index than the morphisms in the second line in (\ref{derived right extension procedure}). We may hence switch their order as in (\ref{Lemma 1.2.1 eq}):
\begin{gather}
\label{partial result after switching order}
\big(1^{\otimes j_{k-1}}\otimes\tau_R\big)\big(1^{\otimes(j_{k-1}-1)}\otimes\Delta\big)\Lambda_{\widehat{A}} = \left(\overleftarrow{\prod_{n=0}^{2k-5}}\overrightarrow{\prod_{\ell= \alpha_{n,\widetilde{\mathbf{i}},\widetilde{\mathbf{j}}}+2}^{\beta_{n,\widetilde{\mathbf{i}},\widetilde{\mathbf{j}}}+2}}\big(1^{\otimes n}\otimes\Delta\otimes 1^{\otimes \ell}\big)\right)\Lambda_B,
\end{gather}
where $\Lambda_B$ is defined as $\chi(\Lambda_{\{1,3,5,\dots,2k-3\}})$, where $\chi$ is the morphism
\begin{gather*}
\big(1^{\otimes (j_{k-1}-i_{k-1}+2k-3)}\otimes\tau_R\big)\big(1^{\otimes(j_{k-1}-i_{k-1}+2k-4)}\otimes\Delta\big)\left(\overrightarrow{\prod_{\ell= 0}^{j_{k-1}-i_{k-1}-1}}\big(1^{\otimes (2k-4)}\otimes\Delta\otimes 1^{\otimes \ell}\big)\right).
\end{gather*}
Here, the term between brackets corresponds to $n = 2k-4$ in (\ref{derived IH}), since
\begin{gather}\label{alpha and beta for 2k-4}
\alpha_{2k-4,\widetilde{\mathbf{i}},\widetilde{\mathbf{j}}} = 0 \qquad\mathrm{and}\qquad \beta_{2k-4,\widetilde{\mathbf{i}},\widetilde{\mathbf{j}}} = j_{k-1}-i_{k-1}-1.
\end{gather}
Using coassociativity, separating the term for $\ell = 0$ and relying on the right extension process, we have
\begin{gather*}
\Lambda_B
= \big(1^{\otimes (j_{k-1}-i_{k-1}+2k-3)}\otimes\tau_R\big)\left(\overrightarrow{\prod_{\ell= 1}^{j_{k-1}-i_{k-1}}}\big(1^{\otimes (2k-4)}\otimes\Delta\otimes 1^{\otimes \ell}\big)\right)\Lambda_{\{1,3,5,\dots,2k-3,2k-2\}}.
\end{gather*}
Now it is manifest that all $\Delta$ in the product between brackets act on the tensor product position $2k-3$ in $\Lambda_{\{1,3,5,\dots,2k-3,2k-2\}}$, whereas $\tau_R$ in fact acts on the last position $2k-2$. Hence we may again apply~(\ref{Lemma 1.2.1 eq}) to switch the order:
\[
\Lambda_B = \left(\overrightarrow{\prod_{\ell= 2}^{j_{k-1}-i_{k-1}+1}}\big(1^{\otimes (2k-4)}\otimes\Delta\otimes 1^{\otimes \ell}\big)\right)\Lambda_{\{1,3,5,\dots,2k-3,2k-1\}}.
\]
Note that the lower and upper bounds in the product between brackets equal $\alpha_{2k-4,\widetilde{\mathbf{i}},\widetilde{\mathbf{j}}}+2$ and $\beta_{2k-4,\widetilde{\mathbf{i}},\widetilde{\mathbf{j}}}+2$ respectively, by~(\ref{alpha and beta for 2k-4}).

Combined with (\ref{derived right extension procedure}) and (\ref{partial result after switching order}), this yields
\begin{gather*}
\Lambda_A = \big(1^{\otimes (i_{k}-1)}\otimes\Delta\otimes 1^{\otimes (j_k-i_k-1)}\big)\cdots\big(1^{\otimes (i_{k}-1)}\otimes\Delta\big)\big(1^{\otimes j_{k-1}}\otimes\Delta\otimes 1^{\otimes (i_k-j_{k-1}-2)}\big)\cdots\\
\hphantom{\Lambda_A =}{}\times \big(1^{\otimes j_{k-1}}\otimes\Delta\otimes 1\big)\left(\overleftarrow{\prod_{n=0}^{2k-4}}\overrightarrow{\prod_{\ell= \alpha_{n,\widetilde{\mathbf{i}},\widetilde{\mathbf{j}}}+2}^{\beta_{n,\widetilde{\mathbf{i}},\widetilde{\mathbf{j}}}+2}}\big(1^{\otimes n}\otimes\Delta\otimes 1^{\otimes \ell}\big)\right)\Lambda_{\{1,3,5,\dots,2k-3,2k-1\}}.
\end{gather*}
Moreover, all morphisms in the first line act on the tensor product positions $2k-2$ or $2k-1$ of $\Lambda_{\{1,3,5,\dots,2k-3,2k-1\}}$, whereas those on the second line act on positions 1 to $2k-3$. The order can hence be switched again by~(\ref{Lemma 1.2.1 eq}), leading first to
\begin{gather*}
\Lambda_A = \left(\overleftarrow{\prod_{n=0}^{2k-4}}\overrightarrow{\prod_{\ell= \alpha_{n,\widetilde{\mathbf{i}},\widetilde{\mathbf{j}}}+j_k-j_{k-1}}^{\beta_{n,\widetilde{\mathbf{i}},\widetilde{\mathbf{j}}}+j_k-j_{k-1}}}\big(1^{\otimes n}\otimes\Delta\otimes 1^{\otimes \ell}\big)\right)\\
\hphantom{\Lambda_A =}{}\times \big(1^{\otimes (i_{k}-j_{k-1}+2k-4)}\otimes\Delta\otimes 1^{\otimes (j_k-i_k-1)}\big)\cdots\big(1^{\otimes (i_{k}-j_{k-1}+2k-4)}\otimes\Delta\big)\\
\hphantom{\Lambda_A =}{}\times
\big(1^{\otimes (2k-3)}\otimes\Delta\otimes 1^{\otimes (i_k-j_{k-1}-2)}\big)\cdots\big(1^{\otimes (2k-3)}\otimes\Delta\otimes 1\big)\Lambda_{\{1,3,5,\dots,2k-3,2k-1\}}
\end{gather*}
and then, after switching the morphisms on the second with those on the third line, to
\begin{gather*}
\Lambda_A = \left(\overleftarrow{\prod_{n=0}^{2k-4}}\overrightarrow{\prod_{\ell= \alpha_{n,\widetilde{\mathbf{i}},\widetilde{\mathbf{j}}}+j_k-j_{k-1}}^{\beta_{n,\widetilde{\mathbf{i}},\widetilde{\mathbf{j}}}+j_k-j_{k-1}}}\big(1^{\otimes n}\otimes\Delta\otimes 1^{\otimes \ell}\big)\right)\\
\hphantom{\Lambda_A =}{}\times \big(1^{\otimes (2k-3)}\otimes\Delta\otimes 1^{\otimes (j_k-j_{k-1}-2)}\big)\cdots\big(1^{\otimes (2k-3)}\otimes\Delta\otimes 1^{\otimes (j_k-i_k+1)}\big)\\
\hphantom{\Lambda_A =}{}\times
\big(1^{\otimes (2k-2)}\otimes\Delta\otimes 1^{\otimes (j_k-i_k-1)}\big)\cdots\big(1^{\otimes (2k-2)}\otimes\Delta\big)
\Lambda_{\{1,3,5,\dots,2k-3,2k-1\}}.
\end{gather*}
Now observe that for every $n\in \{0,1,\dots,2k-4\}$ one has $
\alpha_{n,\mathbf{i},\mathbf{j}} = \alpha_{n,\widetilde{\mathbf{i}},\widetilde{\mathbf{j}}}+j_k-j_{k-1}$ and $\beta_{n,\mathbf{i},\mathbf{j}} = \beta_{n,\widetilde{\mathbf{i}},\widetilde{\mathbf{j}}}+j_k-j_{k-1}$, and moreover we have $\alpha_{2k-3,\mathbf{i},\mathbf{j}} = j_k-i_k+1$, $\beta_{2k-3,\mathbf{i},\mathbf{j}} = j_k-j_{k-1}-2$, $\alpha_{2k-2,\mathbf{i},\mathbf{j}}= 0$ and $\beta_{2k-2,\mathbf{i},\mathbf{j}} = j_k-i_k-1$.
Hence the expression above coincides with (\ref{prop derived TBP}).
\end{proof}

\section{Main results and strategy of proof}\label{Section - Main results}

In this section, we formulate the main results of this paper: the algebraic relations satisfied in the higher rank Askey--Wilson algebra ${\rm AW}(n)$. As in the rank one case, these will be of the form
\begin{gather}\label{standard relation}
[\Lambda_A,\Lambda_B]_q = \big(q^{-2}-q^2\big)\Lambda_{(A\cup B)\setminus(A\cap B)} + \big(q-q^{-1}\big)\big(\Lambda_{A\cap B}\Lambda_{A\cup B} + \Lambda_{A\setminus(A\cap B)}\Lambda_{B\setminus(A\cap B)}\big), \tag{$\ast$}
\end{gather}
or
\begin{gather}\label{commutation}
[\Lambda_A,\Lambda_B] = 0, \tag{$\Delta$}
\end{gather}
under suitable conditions on the sets $A$ and $B$. In this section, we present these conditions, which, based on extensive computer calculations, we believe to be minimal. They can be stated as follows.

\begin{Theorem}\label{thm - commutation general}Let $A,B\subseteq[1;n]$ be such that $B\subseteq A$, then $\Lambda_A$ and $\Lambda_B$ commute.
\end{Theorem}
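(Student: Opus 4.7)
The strategy is to exploit the flexibility of the various extension processes --- in particular the mixed forms of Definition~\ref{mixed extension process def} together with the reordering permitted by Proposition~\ref{prop derived} and Remark~\ref{Remark comodules} --- so as to present $\Lambda_A$ and $\Lambda_B$ in compatible guises. Their commutation can then be read off from the centrality of $\Lambda$ in $\Uqsl$ combined with the cotensor product property of Corollary~\ref{cor cotensor product}, without any appeal to the explicit formulas for $\tau_L$, $\tau_R$ and $\Lambda$.

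Concretely, I would proceed by induction on $|A\setminus B|$, the base case $A=B$ being trivial. For the inductive step, pick $c\in A\setminus B$ and put $A' = A\setminus\{c\}$; since $B\subseteq A'$ and $|A'\setminus B| = |A\setminus B|-1$, the inductive hypothesis gives $[\Lambda_{A'},\Lambda_B]=0$, and one must upgrade this to $[\Lambda_A,\Lambda_B]=0$. Inserting the single element $c$ into $A'$ corresponds not to a single morphism but to a small prescribed sequence of applications of $\Delta$, $\tau_R$ and possibly $\tau_L$. Depending on whether $c$ is adjacent to another element of $A'$, lies in an interior gap, or sits at one end of $A$, one chooses among the right, left, or mixed extension processes so that this insertion step acts either on tensor slots where $\Lambda_B$ is trivial, or through a factor in which $\Lambda$ itself can be exposed. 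The commutation with $\Lambda_B$ is then preserved by Sweedler-style manipulations: writing the inserted morphism on the relevant factor and expanding, the commutator $[\Lambda_A,\Lambda_B]$ collapses to an inner commutator of the shape $[\Lambda,x]$, which vanishes by centrality.

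The principal obstacle is the combinatorial case analysis that the induction step forces on us. The position of $c$ relative to the discrete-interval decomposition of $B$ --- whether $c$ extends an interval of $B$, flanks it, sits in a gap of $B$, or lies outside the convex hull of $B$ --- dictates which extension process is convenient and which Sweedler slot carries the central element $\Lambda$. Each of these sub-cases presumably requires a dedicated fundamental rewriting of $\Lambda_A$, which is the role I would expect of Sections~\ref{section - proof fundamental relations} and~\ref{section - more commutation relations}. A further subtlety, in view of the paper's emphasis on intrinsic methods, is to keep the entire argument at the level of coassociativity, the coideal/comodule axioms and Corollary~\ref{cor cotensor product}, so that the same proof transfers verbatim to the $\ospq^{\otimes n}$ setting of Section~\ref{section q-BI}.
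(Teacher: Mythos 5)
There is a genuine gap, on two levels. First, your induction scheme does not mesh with the only transfer mechanism available here. Every reduction in the paper moves a relation from a pair $(A',B')$ to a pair $(A,B)$ by applying a single algebra morphism $\chi$, built from $\Delta$, $\tau_R$, $\tau_L$, to \emph{both} sides of an identity; any such $\chi$ necessarily transforms $A'$ and $B'$ simultaneously. In your step you fix $B$ and pass from $A'=A\setminus\{c\}$ to $A$, but no morphism sends $\Lambda_{A'}\mapsto\Lambda_A$ while fixing $\Lambda_B$. Take $A=\{1,2,3\}$, $B=\{1,3\}$, $c=2$: the inductive input $[\Lambda_{\{1,3\}},\Lambda_{\{1,3\}}]=0$ is vacuous, and a $\chi$ with $\chi(\Lambda_{\{1,3\}})=\Lambda_{\{1,2,3\}}$ in the first argument cannot simultaneously act as the identity in the second. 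So $[\Lambda_{A'},\Lambda_B]=0$ cannot be ``upgraded'' as you describe. Second, the claim that the commutator ``collapses to an inner commutator $[\Lambda,x]$'' and vanishes by centrality is false beyond the simplest configurations: centrality only disposes of $|B|=1$ (Lemma \ref{lemma Gamma of singleton}), whereas $[\Lambda_{\{1,3\}},\Lambda_{\{1,2,3\}}]=0$ and the fifteen relations \eqref{explicit relations} require direct computation, and the general case genuinely uses the standard relation \eqref{standard relation} for the fundamental configurations of Proposition \ref{prop - fundamental relations}.

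The paper's argument is structured differently: it inducts on the number $k$ of discrete intervals of $A$ (not on $|A\setminus B|$), and in the decisive Case~4 it \emph{enlarges} $B$ rather than shrinking $A$. Lemma \ref{Corollary 1.3.4} expresses $\Lambda_B$ as a combination of generators indexed by sets that commute with $\Lambda_A$ by the induction hypothesis, plus the single term $\Lambda_{\{x_1\}}\Lambda_{B\cup\{x_1\}}$; iterating, $[\Lambda_A,\Lambda_B]$ becomes proportional to $[\Lambda_A,\Lambda_{B\cup\{x_1,\dots,x_m\}}]$ where the enlarged set contains all interior intervals of $A$, a case settled by Proposition \ref{Lemma 1.5.12}, which in turn rests on the chain Lemma \ref{Lemmas 1.3.9 to 1.3.12}, Proposition \ref{Proposition 1.5.6}, Lemma \ref{Lemma 1.5.9, Corollary 1.5.10 en 1.5.11} and on the relations of Section \ref{section - proof fundamental relations}. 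None of this is recoverable from coassociativity, the cotensor product property and centrality alone, so your proposal as it stands does not constitute a proof.
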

Section \ref{section - proof commutation general} will be devoted to the proof of this theorem.

\begin{Definition}
\label{Notation prec}
For $A,B\subseteq [1;n]$ we write $A\prec B$ if $\max(A)<\min(B)$ or if either $A$ or $B$ is empty.
\end{Definition}

\begin{Theorem}
\label{thm - most general algebra relations}
Let $A_1$, $A_2$, $A_3$ and $A_4$ be $($potentially empty$)$ subsets of $[1;n]$ satisfying
\[
A_1 \prec A_2 \prec A_3 \prec A_4.
\]
The standard relation
\[
[\Lambda_A,\Lambda_B]_q = \big(q^{-2}-q^2\big)\Lambda_{(A\cup B)\setminus(A\cap B)} + \big(q-q^{-1}\big)\big(\Lambda_{A\cap B}\Lambda_{A\cup B} + \Lambda_{A\setminus(A\cap B)}\Lambda_{B\setminus(A\cap B)}\big)
\]
is satisfied for $A$ and $B$ defined by one of the following relations:
\begin{alignat}{3}\label{most general sets 1}
& A= A_1 \cup A_2 \cup A_4, \qquad && B = A_2 \cup A_3,& \\
\label{most general sets 2}
& A= A_2 \cup A_3, \qquad && B = A_1 \cup A_3 \cup A_4, &\\
\label{most general sets 3}
& A= A_1 \cup A_3 \cup A_4, \qquad && B = A_1 \cup A_2 \cup A_4.&
\end{alignat}
\end{Theorem}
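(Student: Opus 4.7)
The plan is to reduce the claim to the three rank-one identities (\ref{rank one 12-23})--(\ref{rank one 13-12}) and transport them through $\Delta$ and the coactions $\tau_L,\tau_R$, using only their intrinsic algebraic properties (coassociativity, the comodule axioms, and the cotensor property~(\ref{cotensor product property})). The first step is to exploit the freedom provided by Proposition~\ref{thm - left and right extension process}, Definition~\ref{mixed extension process def} and Proposition~\ref{prop derived} to choose representations of $\Lambda_A$ and $\Lambda_B$ that best match the four-block structure $A_1\prec A_2\prec A_3\prec A_4$. For case~(\ref{most general sets 1}), for instance, I would use the right extension process for $\Lambda_B=\Lambda_{A_2\cup A_3}$ and a mixed extension process for $\Lambda_A=\Lambda_{A_1\cup A_2\cup A_4}$, split at the interface between $A_2$ and $A_4$; Proposition~\ref{prop derived} then pulls all coproducts to the outside, reducing both elements to $\Delta$-expansions of skeletal expressions built from $\tau_L$ and $\tau_R$ acting on a single copy of $\Lambda$.

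Next, I would settle the base case in which each $A_i$ is empty or a singleton, so that the identity takes place in $\Uqsl^{\otimes m}$ for small $m$. Here the main tool is the cotensor product property~(\ref{cotensor product property}), which allows $\tau_R$ and $\tau_L$ to be interchanged around $\Delta(\Lambda)$; after finitely many such trades, both sides of the claimed relation can be rewritten in terms of a common rank-one Askey--Wilson sandwich in three adjacent $\Uqsl$-factors, at which point one of (\ref{rank one 12-23})--(\ref{rank one 13-12}) applies directly. I expect the three cases~(\ref{most general sets 1})--(\ref{most general sets 3}) to mirror the $\Z_3$-symmetry of the rank-one relations, so that they are essentially cyclic permutations of a single base identity. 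The intermediate results announced for Sections~\ref{section - proof fundamental relations} and~\ref{section - more commutation relations} are presumably exactly the $\tau$-swapping and factorisation lemmas needed to carry out these reductions in a uniform way.

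The general case would then follow by induction on $\sum_i|A_i|$. Enlarging one block by an element corresponds, via Proposition~\ref{prop derived}, to applying an additional $\Delta$ at a specific tensor position. Since $\Delta$ is an algebra morphism it preserves the $q$-commutator and every product appearing in~(\ref{standard relation}), so applying it to the relation for the smaller blocks yields the relation for the enlarged ones, provided one checks that the same $\Delta$ correctly transforms every factor $\Lambda_{A\cap B}$, $\Lambda_{A\cup B}$, $\Lambda_{A\setminus(A\cap B)}$, $\Lambda_{B\setminus(A\cap B)}$ and $\Lambda_{(A\cup B)\setminus(A\cap B)}$ into its enlarged analogue. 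This compatibility ultimately rests on~(\ref{Lemma 1.2.1 eq}) together with Proposition~\ref{prop derived}.

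The main obstacle is bookkeeping. Even though the algebraic core is a single rank-one identity, tracking tensor-product positions through the interplay of $\tau_L$, $\tau_R$ and $\Delta$ requires repeated use of~(\ref{Lemma 1.2.1 eq}) to exchange morphisms acting on disjoint positions, and of Theorem~\ref{thm - commutation general} to commute past one another any factors that incidentally correspond to nested subsets. I expect case~(\ref{most general sets 3}), in which $A_4\subseteq A\cap B$ so that $A_1$ and $A_4$ sit on opposite sides of the ``active'' blocks, to be the most delicate, since reducing it to the rank-one picture requires treating the geographically separated blocks $A_1$ and $A_4$ as a single role; this is presumably where the novel left coaction $\tau_L$ and the mixed extension processes become indispensable.
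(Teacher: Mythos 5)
Your overall framework --- transport skeletal identities through morphisms built from $\Delta$, $\tau_L$, $\tau_R$, using multiplicativity, (\ref{Lemma 1.2.1 eq}) and Proposition~\ref{prop derived} --- is indeed the mechanism the paper uses for the \emph{final} reduction. But there is a genuine gap at the heart of your plan: the claim that the base cases are the rank-one relations (\ref{rank one 12-23})--(\ref{rank one 13-12}) and that everything else follows by ``applying an additional $\Delta$'' is false. A morphism $\chi$ composed of maps $1^{\otimes a}\otimes\alpha\otimes 1^{\otimes b}$ acts \emph{uniformly} on every generator appearing in a relation: whether a given tensor position ends up occupied or empty in $\chi(\Lambda_S)$ is determined by the same sequence of $\Delta$'s and $\tau$'s for all seven generators simultaneously. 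In the rank-one relation the only available hole is position~$2$ of $\Lambda_{\{1,3\}}$. In the four-block configuration (\ref{most general sets 1}), however, $A=A_1\cup A_2\cup A_4$ must have a hole exactly where $B=A_2\cup A_3$ is occupied (namely throughout $A_3$), \emph{and} $B$ must have holes where $A$ is occupied (throughout $A_1$ and $A_4$). Two generators with complementary hole patterns cannot both arise as images under a single $\chi$ of generators living in $\Uqsl^{\otimes 3}$. Already your ``base case'' with all $A_i$ singletons, e.g.\ $A=\{1,2,4\}$, $B=\{2,3\}$, is not reachable from rank one by transport; it is case~(\ref{sets 4 without gap}) with $k=1$, $\ell=0$ and requires its own proof. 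The same obstruction appears even with $A_4=\varnothing$ as soon as the shared block $A_2$ contains more than one element: the skeleton $A=\{1,2,4,\dots,2k\}$, $B=\{2,4,\dots,2k,2k+1\}$ of case~(\ref{sets 1}) has $k-1$ holes interleaved with occupied positions of the intersection, and each increment of $k$ changes the combinatorics in a way that multiplicativity of a morphism cannot absorb.

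This is precisely why the paper devotes Sections~\ref{section - proof fundamental relations} and~\ref{section - more commutation relations} to the nine fundamental cases (\ref{sets 1})--(\ref{sets 6 with gap}) and a web of auxiliary commutation relations (Lemmas~\ref{Lemma 1.3.8}, \ref{Lemma 1.4.7}, \ref{Lemmas 1.3.9 to 1.3.12}, etc.). Their proofs are not ``$\tau$-swapping and factorisation lemmas'' as you presume, but substantive inductions on $k$ and $\ell$ that repeatedly rewrite one generator via an already-established instance of (\ref{standard relation}), reorganise nested $q$-commutators via (\ref{q-anticomm 1})--(\ref{q-anticomm 4}), and in the case of (\ref{sets 4 without gap})--(\ref{sets 4 with gap}) invoke a non-formal cancellation argument: one shows $(\Theta-\Xi)\otimes 1=\frac{1}{q+q^{-1}}\Lambda_{\{2k+2\ell+3+\delta\}}\big(1^{\otimes(2k+2\ell+1+\delta)}\otimes\Delta\big)(\Theta-\Xi)$ and concludes $\Theta=\Xi$ from the fact that $\Lambda$ has no inverse in $\Uqsl$ (a degree count in $E$ and $F$). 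None of this is recoverable from coassociativity, the comodule axioms and the cotensor property alone, so your proposal as written would stall at the point where the intersection $A\cap B$ acquires its second element or where $A_4$ becomes nonempty. Your observations about the $\mathbb{Z}_3$-symmetry of the three cases and about the role of $\tau_L$ and the mixed extension processes for handling $A_1$ and $A_4$ on opposite flanks are accurate in spirit, but they describe the organisation of the fundamental cases rather than a way to avoid proving them.
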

This will be shown in Section \ref{section - proof most general algebra relations}.

Our general strategy to prove a relation of the form (\ref{standard relation}) will be as follows. First we will construct an operator $\chi$, by combining morphisms of the form $1^{\otimes n}\otimes\alpha\otimes 1^{\otimes m}$, $\alpha\in\{\Delta,\tau_R,\tau_L\}$ and $n,m\in\N$, such that
\[
\chi(\Lambda_{A'}) = \Lambda_A, \qquad \chi(\Lambda_{B'}) = \Lambda_B, \qquad \chi(\Lambda_{(A'\cup B')\setminus(A'\cap B')}) = \Lambda_{(A\cup B)\setminus(A\cap B)}, \qquad\dots
\]
for certain (less complicated) sets $A'$, $B'$. To prove (\ref{standard relation}) it now suffices to show
\begin{gather}
[\Lambda_{A'},\Lambda_{B'}]_q = \big(q^{-2}-q^2\big)\Lambda_{(A'\cup B')\setminus(A'\cap B')}
\nonumber\\
\hphantom{[\Lambda_{A'},\Lambda_{B'}]_q =}{}
+ \big(q-q^{-1}\big)\big(\Lambda_{A'\cap B'}\Lambda_{A'\cup B'} + \Lambda_{A'\setminus(A'\cap B')}\Lambda_{B'\setminus(A'\cap B')}\big), \tag{$\ast\ast$}\label{mock A}
\end{gather}
apply the operator $\chi$ to both sides of the equation and use its linearity and multiplicativity. In this case we will write ``(\ref{standard relation}) follows from (\ref{mock A}) by $\chi$''. The same strategy applies to relations of the form~(\ref{commutation}). In this respect, the relations of Theorem~\ref{thm - most general algebra relations} can in fact be derived from the following 9 fundamental cases.

\begin{Proposition}\label{prop - fundamental relations}The standard relation \eqref{standard relation} holds for the following combinations of sets~$A$ and~$B$:{\samepage
\begin{gather}\label{sets 1}
\tag{C1}
\begin{split}
&A = \{1,2,4,6,\dots,2k\}, \\ & B = \{2,4,6,\dots,2k,2k+1\};
\end{split}\vspace{2mm}\\
\label{sets 2}
\tag{C2}
\begin{split}
& A= \{2,4,6,\dots,2k,2k+1\}, \\ & B= \{1,2k+1\};
\end{split}\vspace{2mm}\\
\label{sets 3}
\tag{C3}
\begin{split}
& A= \{1,2k+1\}, \\ & B= \{1,2,4,6,\dots,2k\};
\end{split}\vspace{2mm}\\
\label{sets 4 without gap}
\tag{C4}
\begin{split}
& A= \{1,2,4,6,\dots,2k,2k+2\ell+2\}, \\ & B= \{2,4,6,\dots,2k,2k+1,2k+3,\dots,2k+2\ell+1\};
\end{split}\vspace{2mm}\\
\label{sets 4 with gap}\tag{C4$'$}
\begin{split}
& A= \{1,2,4,6,\dots,2k,2k+2\ell+3\}, \\ & B= \{2,4,6,\dots,2k,2k+2,2k+4,\dots,2k+2\ell+2\};
\end{split}\vspace{2mm}\\
\label{sets 5 without gap}
\tag{C5}
\begin{split}
& A= \{2,4,6,\dots,2k,2k+1,2k+3,\dots,2k+2\ell+1\}, \\ & B= \{1,2k+1,2k+3,\dots,2k+2\ell+1,2k+2\ell+2\};
\end{split}\vspace{2mm}\\
\label{sets 5 with gap}\tag{C5$'$}
\begin{split}
& A= \{2,4,6,\dots,2k,2k+2,2k+4,\dots,2k+2\ell+2\}, \\ & B= \{1,2k+2,2k+4,\dots,2k+2\ell+2,2k+2\ell+3\};
\end{split}\vspace{2mm}\\
\label{sets 6 without gap}
\tag{C6}
\begin{split}
& A= \{1,2k+1,2k+3,\dots,2k+2\ell+1,2k+2\ell+2\}, \\ & B= \{1,2,4,6,\dots,2k,2k+2\ell+2\};
\end{split}\vspace{2mm}\\
\label{sets 6 with gap}\tag{C6$'$}
\begin{split}
& A= \{1,2k+2,2k+4,\dots,2k+2\ell+2,2k+2\ell+3\}, \\ & B= \{1,2,4,6,\dots,2k,2k+2\ell+3\},
\end{split}
\end{gather}
with $k,\ell\in\N$.}
\end{Proposition}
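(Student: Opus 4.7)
My plan is to treat each of the nine cases (C1)--(C6$'$) by exhibiting an operator $\chi$, built as a composition of morphisms of the form $1^{\otimes n}\otimes\Delta\otimes 1^{\otimes m}$, $1^{\otimes n}\otimes\tau_R\otimes 1^{\otimes m}$ and $1^{\otimes n}\otimes\tau_L\otimes 1^{\otimes m}$, together with a simpler ``seed'' identity to which $\chi$ will be applied. For (C1)--(C3) the natural seed is the rank one Askey--Wilson triple \eqref{rank one 12-23}--\eqref{rank one 13-12}, which coincides with the $k=1$ instances of these three cases. The operator $\chi$ will ``blow up'' the middle position of $\Uqsl^{\otimes 3}$ into the even pattern $\{2,4,\dots,2k\}$ while leaving positions $1$ and $2k+1$ untouched; concretely, $\chi$ alternates applications of $\tau_R$ and $\Delta$ at the interior positions so that, in (C1) for instance, one simultaneously has $\chi(\Lambda_{\{2\}}) = \Lambda_{\{2,4,\dots,2k\}}$, $\chi(\Lambda_{\{1,2\}}) = \Lambda_{\{1,2,4,\dots,2k\}}$, $\chi(\Lambda_{\{2,3\}}) = \Lambda_{\{2,4,\dots,2k,2k+1\}}$, $\chi(\Lambda_{\{1,2,3\}}) = \Lambda_{\{1,2,4,\dots,2k,2k+1\}}$, and so on for the remaining three generators, which are then mapped to their target counterparts essentially tautologically. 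Applying the linear, multiplicative map $\chi$ to both sides of \eqref{rank one 12-23}--\eqref{rank one 13-12} then delivers (C1)--(C3).

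For (C4)--(C6$'$) the same philosophy applies, but now $\chi$ must additionally grow a right tail corresponding to the $\ell$-block. My plan is to induct on $\ell$, with the base case $\ell=0$ reducing (after a trivial relabeling) to the already established (C1)--(C3), and the inductive step implemented by appending one further application of $\Delta$ at the far right. The primed versus unprimed distinction is handled by exploiting the cotensor product identity \eqref{cotensor product property}: inserting the extra gap between positions $2k$ and $2k+2$ can equivalently be performed from the right with $\tau_R$ or from the left with $\tau_L$, which gives exactly the flexibility needed to absorb that gap into either $\chi$ or the seed, depending on whether the gap separates $A\cap B$ from its complement on one side or the other.

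The principal technical obstacle, as in the proof of Proposition~\ref{thm - left and right extension process}, is the combinatorial bookkeeping required to verify that the \emph{same} $\chi$ acts as predicted on all seven generators $\Lambda_{A'}, \Lambda_{B'}, \Lambda_{A'\cap B'}, \Lambda_{A'\cup B'}, \Lambda_{(A'\cup B')\setminus(A'\cap B')}, \Lambda_{A'\setminus(A'\cap B')}, \Lambda_{B'\setminus(A'\cap B')}$ appearing in \eqref{standard relation}. The key tools are the elementary commutation rule \eqref{Lemma 1.2.1 eq} for morphisms acting on disjoint tensor positions, the comodule axioms \eqref{R comodule prop 1} and \eqref{L comodule prop 1}, and above all Proposition~\ref{prop derived}, which guarantees that every generator in question can be written with all gap-creating $\tau_R$'s (or $\tau_L$'s) applied first and all expansions by $\Delta$ applied last. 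This canonical factorization is what allows one to peel off $\chi$ as a common left factor uniformly across all seven generators in each of the nine cases, reducing the proof of Proposition~\ref{prop - fundamental relations} to the already known rank one identities.
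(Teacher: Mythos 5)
There is a genuine gap, and it is the central point of the whole paper: the fundamental cases \eqref{sets 1}--\eqref{sets 6 with gap} are precisely the ones that \emph{cannot} be obtained from the rank one relations \eqref{rank one 12-23}--\eqref{rank one 13-12} by applying a morphism $\chi$. Your proposed $\chi$ for \eqref{sets 1} must ``blow up'' the middle position $\{2\}$ into the gapped set $\{2,4,\dots,2k\}$, which requires creating holes in the \emph{interior} of the tensor product. But the coactions are only defined on the coideal subalgebras, $\tau_R\colon\I_R\to \Uqsl\otimes\I_R$ and $\tau_L\colon\I_L\to\I_L\otimes\Uqsl$, so a map $1^{\otimes n}\otimes\tau_R\otimes 1^{\otimes m}$ with $m\geq 1$ is simply not defined on the generators: e.g.\ $(1\otimes\Delta)\Delta(\Lambda)$ lies in $\I_L\otimes\Uqsl\otimes\I_R$, and its middle leg need not lie in $\I_R$, so you cannot apply $\tau_R$ there to turn $\Lambda_{\{2,3,4\}}$ into $\Lambda_{\{2,4,5\}}$. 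Even ignoring this, no single composition of end-applied maps acts as required on all seven generators simultaneously: to send $\Lambda_{\{2\}}\mapsto\Lambda_{\{2,4\}}$ you must apply $\Delta$ at position $2$ and then a gap-creating map at position $3$, while to send $\Lambda_{\{1,3\}}\mapsto\Lambda_{\{1,5\}}$ you must \emph{not} apply $\Delta$ at position $2$ but only $\tau_R$'s at the right end; these requirements are incompatible. This is exactly why Theorem \ref{thm - most general algebra relations} restricts to configurations $A_1\prec A_2\prec A_3\prec A_4$ and why its proof in Section \ref{section - proof most general algebra relations} only ever needs to \emph{enlarge} the blocks of the fundamental cases by $\Delta$'s (which are globally defined), never to create new interior gaps. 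The gapped intersection pattern $\{2,4,\dots,2k\}$ has to be built into the seed identities themselves, and that is what Proposition \ref{prop - fundamental relations} is for.

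Your reading of Proposition \ref{prop derived} is also backwards for this purpose: it says all gaps are created first (at the outer ends, while the set is still small) and only then is everything expanded by $\Delta$; it provides no mechanism for inserting a gap after the surrounding positions already carry content. The paper's actual proof is therefore of a completely different nature: a layered induction on $k$ (and on $\ell$), in which each new ``tooth'' of the comb $\{2,4,\dots,2k\}$ is incorporated by rewriting one generator via Lemma \ref{Corollary 1.3.4}, reshuffling nested $q$-commutators with \eqref{q-anticomm 1}--\eqref{q-anticomm 4}, invoking auxiliary commutation relations (Lemmas \ref{Lemma 1.3.7}, \ref{Lemma 1.3.8}, \ref{Lemma 1.4.7}), and, for \eqref{sets 4 without gap}--\eqref{sets 4 with gap}, a separate linear-independence and degree argument showing $\Theta=\Xi$ because $\Lambda$ is not invertible. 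Similarly, your induction on $\ell$ ``by appending $\Delta$ at the far right'' cannot work as stated, since the elements $2k+1,2k+3,\dots,2k+2\ell+1$ being appended belong to $B$ but not to $A$, whereas $2k+2\ell+2$ belongs to $A$ but not to $B$; a single right-end application of $\Delta$ grows both sets in the same way and cannot produce this asymmetric, gapped tail.
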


In the next section, we will provide an elaborate proof for each of these fundamental relations. They will serve as the building blocks to prove similar fundamental commutation relations in Section~\ref{section - more commutation relations} and consequently to prove Theorems~\ref{thm - commutation general} and~\ref{thm - most general algebra relations} in Sections~\ref{section - proof commutation general} and~\ref{section - proof most general algebra relations} respectively.

\section{Proof of Proposition \ref{prop - fundamental relations}}\label{section - proof fundamental relations}

\subsection{Some basic commutation relations}

Throughout the whole paper, it will turn out useful to switch orders in nested $q$-commutators. A straightforward calculation gives the following:

\begin{Lemma}Let $\mathcal{A}$ be any algebra and $\alpha$, $\beta$, $\gamma$, $\delta$ elements of $\mathcal{A}$, then one has
\begin{gather}\label{q-anticomm 1}
[\alpha,[\gamma,\delta]_q]_q = [[\alpha,\gamma]_q,\delta]_q \qquad \mathrm{if} \quad [\alpha,\delta] = 0, \\
\label{q-anticomm 2}
[\alpha,[\gamma,\delta]_q]_q = [\gamma,[\alpha,\delta]_q]_q \qquad \mathrm{if} \quad [\alpha,\gamma] = 0, \\
\label{q-anticomm 3}
[[\gamma,\delta]_q,\beta]_q = [[\gamma,\beta]_q,\delta]_q \qquad \mathrm{if} \quad [\beta,\delta] = 0, \\
\label{q-anticomm 4}
[[\gamma,\delta]_q,\beta]_q = [\gamma,[\delta,\beta]_q]_q \qquad \mathrm{if} \quad [\beta,\gamma] = 0.
\end{gather}
\end{Lemma}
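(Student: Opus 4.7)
The plan is a direct algebraic verification. Expanding all nested $q$-commutators via $[X,Y]_q = qXY - q^{-1}YX$ turns each side of each claimed identity into a $\K$-linear combination of at most four triple products of the three elements involved; comparing the two sides yields a small residual expression that vanishes precisely under the stated commutation hypothesis. No structural ideas are needed and the argument is essentially computational.

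For \eqref{q-anticomm 1} I would expand the left-hand side to $q^2\alpha\gamma\delta - \alpha\delta\gamma - \gamma\delta\alpha + q^{-2}\delta\gamma\alpha$ and the right-hand side to $q^2\alpha\gamma\delta - \gamma\alpha\delta - \delta\alpha\gamma + q^{-2}\delta\gamma\alpha$. The outer terms weighted by $q^{\pm 2}$ coincide automatically, and the remaining four-term discrepancy factors as $\gamma(\alpha\delta - \delta\alpha) + (\alpha\delta - \delta\alpha)\gamma$, which is zero exactly when $[\alpha,\delta] = 0$. For \eqref{q-anticomm 2} the same two expansions produce a residual equal to $q^2[\alpha,\gamma]\delta - q^{-2}\delta[\alpha,\gamma]$ after the unweighted monomials pair off, and this vanishes under $[\alpha,\gamma] = 0$.

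Identities \eqref{q-anticomm 3} and \eqref{q-anticomm 4} are the right-handed mirrors of the first two, obtained by placing the outer $q$-commutator on the right rather than on the left. They admit a completely parallel direct verification, or, more economically, can be recovered from \eqref{q-anticomm 1} and \eqref{q-anticomm 2} by transferring the statements to the opposite algebra $\mathcal{A}^{\mathrm{op}}$, under which $[X,Y]_q$ is sent to $[Y,X]_q$, and then relabeling the dummy elements; under this correspondence \eqref{q-anticomm 3} drops out of \eqref{q-anticomm 2} and \eqref{q-anticomm 4} drops out of \eqref{q-anticomm 1}. The only nontrivial step is the bookkeeping of the $q^{\pm 2}$ coefficients, and I do not expect any real obstacle: the hypotheses are calibrated precisely so that the unwanted cross terms cancel after a single application of the assumed commutation relation.
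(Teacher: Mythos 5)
Your direct expansion is precisely the ``straightforward calculation'' the paper invokes without giving details, and all four identities check out, including the opposite-algebra derivation of \eqref{q-anticomm 3} from \eqref{q-anticomm 2} and of \eqref{q-anticomm 4} from \eqref{q-anticomm 1} via $[X,Y]_q\mapsto[Y,X]_q$. One small slip: the residual for \eqref{q-anticomm 1} is $\gamma[\alpha,\delta]-[\alpha,\delta]\gamma$ rather than $\gamma(\alpha\delta-\delta\alpha)+(\alpha\delta-\delta\alpha)\gamma$, a sign typo that is harmless since either expression vanishes when $[\alpha,\delta]=0$.
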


The following commutation relations are so natural that they will often be relied on in proofs in Subsection \ref{subsection - fundamental cases} without explicit reference.

\begin{Lemma}\label{lemma Gamma of singleton}For any $i\in[1;n]$ and any subset $A\subseteq [1;n]$ one has
\[
[\Lambda_A, \Lambda_{\{i\}}] = 0.
\]
\end{Lemma}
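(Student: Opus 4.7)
The plan is to recognize that this is an immediate consequence of centrality. First I would unpack Definition~\ref{right extension process def} for the singleton set $A=\{i\}$: in that case $m=1$, so the product $\overrightarrow{\prod_{j=2}^{m}}\mu_{j}^{A}$ is empty, and hence
\[
\Lambda_{\{i\}} = 1^{\otimes (i-1)}\otimes \Lambda \otimes 1^{\otimes (n-i)}.
\]
Next I would invoke the defining property of the Casimir element: by construction in~\eqref{Lambda def}, $\Lambda$ commutes with every element of $\Uqsl$. A routine check on elementary tensors then shows that embedding a central element of $\Uqsl$ into a fixed tensor slot yields a central element of $\Uqsl^{\otimes n}$; that is, $1^{\otimes (i-1)}\otimes \Lambda\otimes 1^{\otimes (n-i)}$ lies in the center of $\Uqsl^{\otimes n}$.

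Finally, since by Definition~2.5 the algebra ${\rm AW}(n)$ is a subalgebra of $\Uqsl^{\otimes n}$, the element $\Lambda_A$ belongs to $\Uqsl^{\otimes n}$ and therefore commutes with the central element $\Lambda_{\{i\}}$. There is essentially no obstacle: the result rides entirely on centrality of the Casimir, and no information about the detailed structure of the morphisms $\mu_j^A$ or the coactions $\tau_R,\tau_L$ is required.
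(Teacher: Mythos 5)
Your proof is correct and follows exactly the paper's own argument: identify $\Lambda_{\{i\}}$ as $1^{\otimes(i-1)}\otimes\Lambda\otimes 1^{\otimes(n-i)}$ via the extension process with $m=1$ and then use the centrality of the Casimir element to conclude that this element is central in $\Uqsl^{\otimes n}$. Your version merely spells out in more detail the two facts that the paper's one-line proof invokes.
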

\begin{proof}This is immediate by the fact that $\Lambda_{\{i\}} = 1^{\otimes (i-1)}\otimes\Lambda\otimes 1^{\otimes(n-i)}$ and that $\Lambda$ is the Casimir operator of $U_q(\mathfrak{sl}_2)$.
\end{proof}

\begin{Lemma}\label{Lemma 1.3.7}Let $A, A'\subseteq [1;n]$ be such that $A \prec A'$, where we use Definition~{\rm \ref{Notation prec}}, then one has
\[
[\Lambda_A,\Lambda_{A'}] = 0, \qquad [\Lambda_A,\Lambda_{A\cup A'}] = 0,\qquad [\Lambda_{A'},\Lambda_{A\cup A'}] = 0.
\]
\end{Lemma}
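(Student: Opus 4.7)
The first commutator $[\Lambda_A,\Lambda_{A'}]=0$ is immediate: under the hypothesis $A\prec A'$, the non-trivial tensor factors of $\Lambda_A$ and $\Lambda_{A'}$ lie in the disjoint ranges of positions $[\min A,\max A]$ and $[\min A',\max A']$ of $\Uqsl^{\otimes n}$.

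For $[\Lambda_A,\Lambda_{A\cup A'}]=0$, I plan to use the mixed extension process of Definition~\ref{mixed extension process def} applied to $A\cup A'$ at the split position $j=|A|$. Writing $A=\{a_1,\dots,a_m\}$ and $A'=\{a'_1,\dots,a'_{m'}\}$ with $a_m<a'_1$ and setting $r=a'_{m'}-a_m$, the ``right part'' of this mixed extension produces an element $T\in\Uqsl^{\otimes(r+1)}$ which, up to the ambient outer identities, is precisely $\Lambda_{\{a_m\}\cup A'}$ realized in $\Uqsl^{\otimes(r+1)}$. By the equivalence of extension processes (Proposition~\ref{thm - left and right extension process}) we may equally view $T$ as built via the left extension starting from $\Lambda\in\I_L$, and so its first tensor factor lies in $\I_L$. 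The remaining ``left part'' takes the form $\Xi\otimes 1^{\otimes r}$, where $\Xi\colon\I_L\to\Uqsl^{\otimes(a_m-a_1+1)}$ is the composite of the morphisms $\widehat{\mu}_i^A$ effectuating the left extension for $A$ and satisfies $\Xi(\Lambda)=\Lambda_A$ (stripped of outer identities). Hence $\Lambda_{A\cup A'}=(\Xi\otimes 1^{\otimes r})(T)$ and simultaneously $\Lambda_A=(\Xi\otimes 1^{\otimes r})(\Lambda\otimes 1^{\otimes r})$. Since $\Xi\otimes 1^{\otimes r}$ is an algebra morphism, the commutator is the image of $[\Lambda\otimes 1^{\otimes r},T]$ computed in $\Uqsl^{\otimes(r+1)}$; but there, $\Lambda\otimes 1^{\otimes r}$ is precisely the generator $\Lambda_{\{1\}}$ of the smaller algebra, which commutes with $T$ by Lemma~\ref{lemma Gamma of singleton}.

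For $[\Lambda_{A'},\Lambda_{A\cup A'}]=0$, I apply a dual argument via the right extension process (Definition~\ref{right extension process def}). Applying the first $m$ morphisms $\mu_2^{A\cup A'},\ldots,\mu_{m+1}^{A\cup A'}$ in order to $\Lambda$ yields the element $\Lambda_{A\cup\{a'_1\}}\in\Uqsl^{\otimes(a'_1-a_1+1)}$, whose last tensor factor lies in $\I_R$. The composition of the remaining morphisms $\mu_{m+2}^{A\cup A'},\ldots,\mu_{m+m'}^{A\cup A'}$ takes the form $1^{\otimes(a'_1-a_1)}\otimes\Psi$ for an algebra morphism $\Psi\colon\I_R\to\Uqsl^{\otimes(a'_{m'}-a'_1+1)}$ --- namely the right extension for $A'$ shifted so $a'_1\mapsto 1$, satisfying $\Psi(\Lambda)=\Lambda_{A'}$. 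Therefore $\Lambda_{A\cup A'}=(1^{\otimes(a'_1-a_1)}\otimes\Psi)(\Lambda_{A\cup\{a'_1\}})$ and $\Lambda_{A'}=(1^{\otimes(a'_1-a_1)}\otimes\Psi)(1^{\otimes(a'_1-a_1)}\otimes\Lambda)$, so the commutator reduces to $[1^{\otimes(a'_1-a_1)}\otimes\Lambda,\Lambda_{A\cup\{a'_1\}}]$ in $\Uqsl^{\otimes(a'_1-a_1+1)}$, which vanishes again by Lemma~\ref{lemma Gamma of singleton}.

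The main obstacle is to verify carefully that these partial extensions factor $\Lambda_{A\cup A'}$ exactly as claimed --- in particular that the intermediate objects $T$ and $\Lambda_{A\cup\{a'_1\}}$ have their first (respectively last) tensor factor in $\I_L$ (respectively $\I_R$), so that the extension morphism $\Xi$ (respectively $\Psi$) is well-defined on them. Both facts follow from the coideal/comodule structure of $\I_L$ and $\I_R$ together with Proposition~\ref{thm - left and right extension process}.
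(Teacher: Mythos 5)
Your proposal is correct and follows essentially the same route as the paper: the paper also dispatches the first commutator by disjointness of tensor positions, and obtains the other two by applying a mixed left/right extension morphism $\chi$ (built from $\alpha_m\otimes 1^{\otimes m}$ with $\alpha_m\in\{\Delta,\tau_L\}$ and $1^{\otimes m}\otimes\beta_m$ with $\beta_m\in\{\Delta,\tau_R\}$) to the seed relation $[\Lambda_{\{1\}},\Lambda_{\{1,2\}}]=0$, which is exactly your factorization through $\Xi\otimes 1^{\otimes r}$ and $1^{\otimes(a_1'-a_1)}\otimes\Psi$ combined with the centrality of the Casimir. Your extra care about the first (resp.\ last) tensor factor of the intermediate element lying in $\I_L$ (resp.\ $\I_R$) is the right point to check and is indeed guaranteed by the coideal/comodule structure.
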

\begin{proof}The first statement is trivial, since $\Lambda_A$ and $\Lambda_{A'}$ live in disjoint tensor product positions. For the second claim, writing
\[
A = [1;j_1]\cup[i_2;j_2]\cup\cdots\cup[i_k;j_k], \qquad A' = [i_{k+1};j_{k+1}]\cup\cdots\cup[i_{k+\ell};j_{k+\ell}],
\]
the statement follows from $[\Lambda_{\{1\}},\Lambda_{\{1,2\}}] = 0$ by a suitable morphism of the form
\begin{gather*}
\chi = \left(\overrightarrow{\prod_{m=i_{k+1}-1}^{j_{k+\ell}-2}}\big(1^{\otimes m}\otimes\beta_{m}\big)\right)\left(\overrightarrow{\prod_{m=1}^{i_{k+1}-2}}\big(\alpha_m\otimes 1^{\otimes m}\big)\right),
\end{gather*}
where each $\alpha_m\in\{\Delta,\tau_L\}$ and each $\beta_{m} \in \{\Delta, \tau_R\}$. The third statement follows analogously.
\end{proof}

The next lemma provides a first generalization of the relations (\ref{rank one 12-23})--(\ref{rank one 13-12}).

\begin{Lemma}\label{Corollary 1.3.4}Let $i\in[1;n]$ and $A_1,A_2\subseteq[1;n]$ be such that $A_1\prec\{i\}\prec A_2$, then the relation~\eqref{standard relation} holds for $(A,B)$ one of the couples
\[
(A_1 \cup \{i\}, \{i\}\cup A_2), \qquad (\{i\}\cup A_2, A_1\cup A_2), \qquad (A_1\cup A_2, A_1\cup \{i\}).
\]
\end{Lemma}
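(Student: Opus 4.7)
My plan is to apply the general strategy of Section \ref{Section - Main results}: construct an algebra morphism $\chi$ that turns one of the rank one relations \eqref{rank one 12-23}, \eqref{rank one 23-13}, \eqref{rank one 13-12} into the claimed higher-rank identity. I will sketch the plan for the first case $(A,B) = (A_1\cup\{i\}, \{i\}\cup A_2)$; the other two should follow symmetrically from \eqref{rank one 23-13} and \eqref{rank one 13-12}.

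Write $A_1 = \{a_1<\cdots<a_k\}$, $A_2 = \{b_1<\cdots<b_\ell\}$, and let $j_0 = k+1$ be the position of $i$ in the ordered listing of $C := A_1\cup\{i\}\cup A_2$. For $\chi\colon \I_L\otimes\Uqsl\otimes\I_R \to \Uqsl^{\otimes n}$ I would take the composition of the left-extension morphisms $\widehat{\mu}_s^{C}$ for $s=1,\dots,k$ (which act only on tensor positions $\leq i$) and the right-extension morphisms $\mu_{s,j_0}^{C}$ for $s=k+2,\dots,k+\ell+1$ (which act only on positions $\geq i$), tensored with identities in the outer positions $[1;a_1-1]$ and $[b_\ell+1;n]$. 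These two families act on disjoint tensor positions (position $i$ is untouched by both), so they commute by \eqref{Lemma 1.2.1 eq}. Because $\Delta(\Lambda)\in\I_L\otimes\I_R$ (Corollary \ref{cor cotensor product}) and $\I_L$, $\I_R$ are coideal subalgebras, each rank one generator $\Lambda_X$ with $\varnothing\neq X\subseteq\{1,2,3\}$ lies in $\I_L\otimes\Uqsl\otimes\I_R$, so $\chi$ is well defined and acts as an algebra morphism on each.

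The core of the argument is then to verify the seven identifications $\chi(\Lambda_{\{1,2,3\}}) = \Lambda_{C}$, $\chi(\Lambda_{\{1,2\}}) = \Lambda_{A_1\cup\{i\}}$, $\chi(\Lambda_{\{2,3\}}) = \Lambda_{\{i\}\cup A_2}$, $\chi(\Lambda_{\{1,3\}}) = \Lambda_{A_1\cup A_2}$, $\chi(\Lambda_{\{1\}}) = \Lambda_{A_1}$, $\chi(\Lambda_{\{2\}}) = \Lambda_{\{i\}}$, and $\chi(\Lambda_{\{3\}}) = \Lambda_{A_2}$. The first is exactly the mixed extension process $\Lambda^{(j_0)}_{C}$, which equals $\Lambda_C$ by the remark following Definition \ref{mixed extension process def}. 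For $\chi(\Lambda_{\{1,2\}})$, the right-extension factors of $\chi$ act trivially on slot 3 (which holds $1$), so $\chi$ reduces to the left extension process for $A_1\cup\{i\}$ applied to $\Delta(\Lambda)$, yielding $\widehat{\Lambda}_{A_1\cup\{i\}} = \Lambda_{A_1\cup\{i\}}$ by Proposition \ref{thm - left and right extension process}; the case of $\Lambda_{\{2,3\}}$ is symmetric, and the three singleton cases are immediate since only the relevant portion of $\chi$ is nontrivial in each.

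The main obstacle I anticipate is the identification $\chi(\Lambda_{\{1,3\}}) = \Lambda_{A_1\cup A_2}$: I would start from the two presentations $\Lambda_{\{1,3\}} = (1\otimes\tau_R)\Delta(\Lambda) = (\tau_L\otimes 1)\Delta(\Lambda)$ afforded by Corollary \ref{cor cotensor product}, commute the outermost $\tau_L$ and $\tau_R$ past the left- and right-extension pieces of $\chi$ via \eqref{Lemma 1.2.1 eq}, and recognise the outcome as a legitimate (mixed) extension process for $A_1\cup A_2$, whose single gap now encompasses position $i$. Once all seven identifications are secured, applying the algebra morphism $\chi$ to both sides of \eqref{rank one 12-23} and using that $\chi$ preserves $q$-commutators and products delivers the claimed identity.
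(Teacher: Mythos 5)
Your proposal is correct and follows essentially the same route as the paper: the paper's proof simply invokes the mixed extension process with parameter $j$ (the position of $i$ in $A_1\cup\{i\}\cup A_2$) to produce a morphism $\chi$ sending $\{1\}\mapsto A_1$, $\{2\}\mapsto\{i\}$, $\{3\}\mapsto A_2$, and then applies $\chi$ to the three rank one relations. The identifications you single out --- in particular $\chi(\Lambda_{\{1,3\}})=\Lambda_{A_1\cup A_2}$ via the cotensor product property and the commutation rule \eqref{Lemma 1.2.1 eq} --- are exactly what the paper leaves implicit in the phrase ``the mixed extension process asserts the existence of such a $\chi$''.
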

\begin{proof}Let $A_1 \cup \{i\}\cup A_2 = \{a_1,\dots,a_m\}$, ordered such that $a_{\ell}<a_{\ell+1}$ for all $\ell$ and let $j$ be such that $a_j = i$. The mixed extension process with parameter $j$ asserts the existence of a~morphism~$\chi$ which sends
\begin{gather*}
\{1\}\mapsto A_1, \qquad \{2\}\mapsto \{i\}, \qquad \{3\}\mapsto A_2.
\end{gather*}
Hence the statements follow from (\ref{rank one 12-23})--(\ref{rank one 13-12}) by $\chi$.
\end{proof}

A final immediate commutation relation is the following.

\begin{Lemma}\label{Lemma 1.3.8}For any $k\in\N$ one has
\[
[\Lambda_{\{2,4,\dots,2k\}},\Lambda_{\{1,2k+1\}}] = 0.
\]
\end{Lemma}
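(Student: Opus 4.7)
The plan is to reduce the commutator via Proposition~\ref{prop derived} and then invoke a structural commutation fact. By Definition~\ref{right extension process def}, since $\min\{2,4,\dots,2k\} = 2$ and $\max\{2,4,\dots,2k\} = 2k$, one has $\Lambda_{\{2,4,\dots,2k\}}^{(2k+1)} = 1 \otimes \Lambda_{\{1,3,\dots,2k-1\}}^{(2k-1)} \otimes 1$, so the nontrivial content lives in the middle $2k-1$ tensor positions. Meanwhile, applying Proposition~\ref{prop derived} to $A = \{1\}\cup\{2k+1\}$ (two singleton intervals) yields $\Lambda_{\{1,2k+1\}}^{(2k+1)} = (1\otimes\Delta^{2k-2}\otimes 1)\Lambda_{\{1,3\}}^{(3)}$, where $\Delta^{2k-2}\colon \Uqsl \to \Uqsl^{\otimes(2k-1)}$ denotes the iterated coproduct acting on the middle tensor position.

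By Corollary~\ref{cor cotensor product}, one has $\Lambda_{\{1,3\}}^{(3)} \in \I_L \otimes \Uqsl \otimes \I_R$, so writing $\Lambda_{\{1,3\}}^{(3)} = \sum_\alpha a_\alpha \otimes b_\alpha \otimes c_\alpha$ the commutator of interest expands as
\[
[\Lambda_{\{2,4,\dots,2k\}},\Lambda_{\{1,2k+1\}}] = \sum_\alpha a_\alpha \otimes \bigl[\Lambda_{\{1,3,\dots,2k-1\}}^{(2k-1)},\Delta^{2k-2}(b_\alpha)\bigr] \otimes c_\alpha.
\]
Hence it suffices to establish the auxiliary claim that $\Lambda_{\{1,3,\dots,2k-1\}}^{(2k-1)}$ commutes with every element of the diagonal subalgebra $\Delta^{2k-2}(\Uqsl) \subset \Uqsl^{\otimes(2k-1)}$.

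I would prove this auxiliary claim by a secondary induction on $k$. The base case $k=1$ is the centrality of $\Lambda$ in $\Uqsl$. For the inductive step, write $\Lambda_{\{1,3,\dots,2k-1\}}^{(2k-1)} = (1^{\otimes(2k-3)}\otimes\tau_R)(1^{\otimes(2k-4)}\otimes\Delta)\Lambda_{\{1,3,\dots,2k-3\}}^{(2k-3)}$ using the right extension process. The $\Delta$-step is handled by coassociativity, namely $(1^{\otimes(2k-4)}\otimes\Delta)\Delta^{2k-4} = \Delta^{2k-3}$, together with the fact that $(1^{\otimes(2k-4)}\otimes\Delta)$ is an algebra morphism and therefore maps a vanishing commutator to a vanishing one. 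The $\tau_R$-step is treated via the comodule axiom~\eqref{R comodule prop 1} combined with the observation that the last tensor factor of $\Lambda_{\{1,3,\dots,2k-3\}}^{(2k-3)}$ always lies in $\I_R$, since this property is preserved at every step of the right extension construction.

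The main obstacle is precisely this $\tau_R$-step in the nested induction, because $\tau_R$ genuinely differs from $\Delta\rvert_{\I_R}$ and one must carefully track how the newly introduced tensor factor couples with the final slot of $\Delta^{2k-2}(y)$. Conceptually, the coaction $\tau_R$ is engineered---via the universal $R$-matrix, as noted just before~\eqref{Lambda 13 with tau_R}---so that precisely this interaction is controlled and the generators $\Lambda_A$ land in the commutant of the diagonal $\Uqsl$-action; executing this rigorously within the intrinsic framework of the paper amounts to a careful interleaving of the comodule axiom and the cotensor product property of Corollary~\ref{cor cotensor product}.
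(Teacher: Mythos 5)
Your reduction is valid and genuinely different from the paper's argument: the identities $\Lambda_{\{2,4,\dots,2k\}} = 1\otimes\Lambda_{\{1,3,\dots,2k-1\}}\otimes 1$ and $\Lambda_{\{1,2k+1\}} = \big(1\otimes\Delta^{2k-2}\otimes 1\big)\Lambda_{\{1,3\}}$ do follow from Definition~\ref{right extension process def} and Remark~\ref{Remark comodules}, and they correctly reduce the lemma to the assertion that $\Lambda_{\{1,3,\dots,2k-1\}}$ lies in the commutant of the diagonal subalgebra $\Delta^{2k-2}(\Uqsl)\subset\Uqsl^{\otimes(2k-1)}$. That assertion is true, but your proof of it has a genuine gap, and it sits exactly where you flag it. Knowing $[X,\Delta^{2k-3}(y)]=0$ for all $y$ does not yield $[(1^{\otimes(2k-3)}\otimes\tau_R)X,\ \Delta^{2k-2}(y)]=0$: the morphism $1^{\otimes(2k-3)}\otimes\tau_R$ cannot even be applied to $\Delta^{2k-3}(y)$, whose last leg is generally not in $\I_R$, and since $\tau_R\neq\Delta\vert_{\I_R}$ the ``algebra morphisms preserve vanishing commutators'' argument that disposes of the $\Delta$-step is unavailable. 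The comodule axiom~(\ref{R comodule prop 1}) only governs compositions of $\tau_R$ with itself and with $\Delta$, and the cotensor property~(\ref{cotensor product property}) only concerns the single element $\Delta(\Lambda)$; neither says anything about commutators with $\Delta^{2k-2}(y)$ for arbitrary $y$. The appeal to the universal $R$-matrix lies outside the paper's intrinsic framework. As written, the inductive step of your auxiliary claim is an unproven assertion, so the proof is incomplete.

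The auxiliary claim can be rescued with the paper's own toolkit, but by a different induction: every interval generator $\Lambda_{[i;j]} = 1^{\otimes(i-1)}\otimes\Delta^{j-i}(\Lambda)\otimes 1^{\otimes(n-j)}$ commutes with $\Delta^{n-1}(\Uqsl)$ because $\Lambda$ is central and $\Delta^{n-1}(y)$ expands into sums of pure tensors, and Lemma~\ref{Corollary 1.3.4} expresses any $\Lambda_A$ with a hole as a $q$-commutator of, plus products of, generators with strictly fewer holes, so an induction on the number of holes closes the argument. By contrast, the paper avoids the commutant of the diagonal entirely: it inducts on $k$, expands $\Lambda_{\{2,4,\dots,2k\}}$ via Lemma~\ref{Corollary 1.3.4} in terms of $\Lambda_{\{2,3\}}$, $\Lambda_{\{3,4,6,\dots,2k\}}$ and related generators, and checks that $\Lambda_{\{1,2k+1\}}$ commutes with each term by transporting the induction hypothesis and the relation $[\Lambda_{\{1,3\}},\Lambda_{\{2\}}]=0$ with suitable morphisms~$\chi$. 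Your route would be more conceptual once the commutant statement is actually proved; as submitted it does not constitute a proof.
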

\begin{proof} By induction on $k$. The case $k = 1$ is trivial by Lemma \ref{lemma Gamma of singleton}. Suppose hence the claim has been proven for $k-1$. Lemma~\ref{Corollary 1.3.4} asserts
\[
\Lambda_{\{2,4,6,\dots,2k\}} = \frac{[\Lambda_{\{2,3\}},\Lambda_{\{3,4,6,\dots,2k \}}]_q}{q^{-2}-q^2}+\frac{\Lambda_{\{3\}}\Lambda_{\{2,3,4,6,\dots,2k\}} + \Lambda_{\{2\}}\Lambda_{\{4,6,\dots,2k\}}}{q+q^{-1}},
\]
hence it suffices to show that $\Lambda_{\{1,2k+1\}} $ commutes with each term in the right-hand side. For~$\Lambda_{\{2,3\}}$ this follows from $[\Lambda_{\{1,3\}},\Lambda_{\{2\}}] = 0$ by $\chi = \big(1^{\otimes (2k-1)}\otimes\tau_R)\cdots\big(1^{\otimes 3}\otimes\tau_R\big)(1\otimes\Delta\otimes 1)$. The other nontrivial commutation relations follow from the induction hypothesis, by $\chi = \big(\tau_L\otimes 1^{\otimes (2k-1)}\big)\big(1\otimes\Delta\otimes 1^{\otimes (2k-3)}\big)$, $\chi = \big(1\otimes\Delta\otimes 1^{\otimes (2k-2)}\big)\big(1\otimes\Delta\otimes 1^{\otimes (2k-3)}\big)$ and $\chi = \big(\tau_L\otimes 1^{\otimes (2k-3)}\big)\big(\tau_L\otimes 1^{\otimes (2k-2)}\big)$ respectively.
\end{proof}

\subsection{The fundamental cases (\ref{sets 1})--(\ref{sets 6 with gap})}\label{subsection - fundamental cases}

In this section we will prove that the relation (\ref{standard relation}) is satisfied for the combinations of sets (\ref{sets 1})--(\ref{sets 6 with gap}). We will work out the proof for three of these cases in detail, and describe concisely how one can show the remaining cases.

\begin{Lemma}The relation~\eqref{standard relation} holds for the sets \eqref{sets 2} and \eqref{sets 5 without gap} with $\ell = 0$.
\end{Lemma}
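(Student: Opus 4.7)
The strategy is the one outlined at the end of Section~\ref{Section - Main results}: derive each case by applying an algebra morphism $\chi$, built from the coproduct $\Delta$ and the coactions $\tau_R$, $\tau_L$, to a simpler relation that is already known, and then invoke linearity and multiplicativity of $\chi$.

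For (C2), the plan is to lift the rank-one relation \eqref{rank one 23-13} via the morphism
\[
\chi = 1 \otimes \phi \otimes 1 \colon \Uqsl^{\otimes 3} \to \Uqsl^{\otimes(2k+1)},
\]
where $\phi \colon \Uqsl \to \Uqsl^{\otimes(2k-1)}$ is the composition of $\Delta$'s and $\tau_R$'s carrying out the right extension process for the skeleton $\{1,3,5,\ldots,2k-1\}$, so that $1 \otimes \phi(\Lambda) \otimes 1 = \Lambda_{\{2,4,\ldots,2k\}}$. One checks that $\chi$ sends each of the seven generators of \eqref{rank one 23-13} to the corresponding generator of (C2): the identifications $\Lambda_{\{1\}}\mapsto\Lambda_{\{1\}}$, $\Lambda_{\{3\}}\mapsto\Lambda_{\{2k+1\}}$, $\Lambda_{\{2\}}\mapsto\Lambda_{\{2,4,\ldots,2k\}}$, $\Lambda_{\{1,2\}}\mapsto\Lambda_{\{1,2,4,\ldots,2k\}}$ and $\Lambda_{\{2,3\}}\mapsto\Lambda_{\{2,4,\ldots,2k,2k+1\}}$ are immediate from the right extension process. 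The only delicate identifications are $\chi(\Lambda_{\{1,3\}})=\Lambda_{\{1,2k+1\}}$ and $\chi(\Lambda_{\{1,2,3\}})=\Lambda_{\{1,2,4,\ldots,2k,2k+1\}}$: they involve the $\tau_R$-defined element through \eqref{Lambda 13 with tau_R}, and to produce the correct expression one commutes $\tau_R$ past the $\phi$-expansion by repeated use of \eqref{Lemma 1.2.1 eq} and \eqref{R comodule prop 1}, together with the cotensor product property \eqref{cotensor product property}. Applying $\chi$ to \eqref{rank one 23-13} then yields (C2).

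For (C5) with $\ell=0$, an analogous lifting works, this time starting from the rank-$4$ base identity
\[
[\Lambda_{\{2,3\}}, \Lambda_{\{1,3,4\}}]_q = \big(q^{-2}-q^2\big)\Lambda_{\{1,2,4\}} + \big(q-q^{-1}\big)\big(\Lambda_{\{3\}}\Lambda_{\{1,2,3,4\}} + \Lambda_{\{2\}}\Lambda_{\{1,4\}}\big)
\]
and applying $\chi' = 1 \otimes \phi \otimes 1 \otimes 1 \colon \Uqsl^{\otimes 4} \to \Uqsl^{\otimes(2k+2)}$ with the same $\phi$ as above. The verification that $\chi'$ correctly maps the seven generators of this identity to those of (C5, $\ell=0$) is, once again, a routine consequence of the right extension process and the cotensor property. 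What remains is to establish the rank-$4$ base identity itself, which is (C5, $\ell=0$) in the case $k=1$. I would do so by multiplying both sides by the central element $\Lambda_{\{3\}}$, rewriting $\Lambda_{\{3\}}\Lambda_{\{1,3,4\}}$ as a $q$-commutator of two-element generators via the case of Lemma~\ref{Corollary 1.3.4} with $A_1=\{1\}$, $i=3$, $A_2=\{4\}$, and then invoking the switching identities \eqref{q-anticomm 1}--\eqref{q-anticomm 4}, together with the commutation of disjoint generators (Lemma~\ref{Lemma 1.3.7}), to bring both sides to a common form; the equality can then be descended back to the one without the factor $\Lambda_{\{3\}}$ using that $\Lambda$ is not a zero-divisor in $\Uqsl$.

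The main obstacle is precisely this rank-$4$ base case. Unlike in (C2), it cannot be produced from \eqref{rank one 23-13} by a single morphism of the form $1^{\otimes a}\otimes\alpha\otimes 1^{\otimes b}$ with $\alpha\in\{\Delta,\tau_R,\tau_L\}$: any such morphism would act symmetrically on all generators touching a given tensor position, whereas here the new index $4$ must appear in $\Lambda_{\{1,3\}}$, $\Lambda_{\{1,2\}}$, $\Lambda_{\{1,2,3\}}$ but be absent from $\Lambda_{\{2,3\}}$, $\Lambda_{\{1\}}$, $\Lambda_{\{2\}}$, $\Lambda_{\{3\}}$. Circumventing this asymmetry through the $q$-commutator reduction that exploits the centrality of $\Lambda_{\{3\}}$ is the technical core of the lemma.
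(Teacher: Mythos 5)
There is a genuine gap, and it sits at the very heart of your argument: the morphism $\chi = 1\otimes\phi\otimes 1$ does not exist. The map $\phi$ you describe is a composition involving $\tau_R$, but $\tau_R$ is only defined on the coideal subalgebra $\I_R\subsetneq\Uqsl$; throughout the paper it is therefore only ever applied to the \emph{last} tensor leg (and $\tau_L$ only to the first), because only there is membership in $\I_R$ (resp.~$\I_L$) guaranteed by the coideal property. Conjugating $\phi$ into the middle position destroys this: already for $\Lambda_{\{2,3\}} = 1\otimes\Delta(\Lambda)$, the first nontrivial step of $1\otimes\phi\otimes 1$ would require applying $\tau_R$ to a leg of $(1\otimes\Delta)\Delta(\Lambda)\in\I_L\otimes\Uqsl\otimes\I_R$ that lies merely in $\Uqsl$, where $\tau_R$ is undefined, and no axiom of the form $(\tau_R\otimes 1)\Delta = (1\otimes\Delta)\tau_R$ is available to repair this (the comodule axiom \eqref{R comodule prop 1} relates $\tau_R$ to itself, not to $\Delta$ applied \emph{before} it). More structurally: any composition of the admissible building blocks $1^{\otimes a}\otimes\Delta\otimes 1^{\otimes b}$, $1^{\otimes a}\otimes\tau_R$, $\tau_L\otimes 1^{\otimes b}$ can only create holes adjacent to the two extreme positions; it can never create a hole strictly inside the block of target positions assigned to a \emph{middle} source position while the last source position still maps to its right. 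Hence no admissible $\chi$ can send $\Lambda_{\{2\}}\mapsto\Lambda_{\{2,4,\dots,2k\}}$ and simultaneously $\Lambda_{\{2,3\}}\mapsto\Lambda_{\{2,4,\dots,2k,2k+1\}}$ for $k\geq 2$. The obstruction you correctly identify for your rank-$4$ base identity (the new index must appear in some generators sharing a tensor position but not in others) is exactly the obstruction that kills your lift of (C2) as well --- you just did not apply it there. Indeed, if your $\chi$ existed, then applying it to \eqref{rank one 12-23}--\eqref{rank one 13-12} would prove (C1), (C2) and (C3) in one line, and the entire inductive apparatus of Section~\ref{section - proof fundamental relations} would be superfluous; the fact that the paper needs that apparatus is a strong sanity check against such a shortcut.

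The same defect invalidates your reduction of (C5) with $\ell=0$ to the rank-$4$ identity via $\chi' = 1\otimes\phi\otimes 1\otimes 1$, and your sketch for the rank-$4$ identity itself is too thin to carry weight: after multiplying by $\Lambda_{\{3\}}$ and substituting the expansion of $\Lambda_{\{3\}}\Lambda_{\{1,3,4\}}$ from Lemma~\ref{Corollary 1.3.4}, you face $[\Lambda_{\{2,3\}},[\Lambda_{\{1,3\}},\Lambda_{\{3,4\}}]_q]_q$, and none of \eqref{q-anticomm 1}--\eqref{q-anticomm 4} applies because $\Lambda_{\{2,3\}}$ commutes with neither inner factor. (That identity is in any case just the case $k=1$, $\ell=0$ of \eqref{sets 5 without gap}, which the paper disposes of by direct computation.) What the paper actually does is a \emph{joint} induction on $k$ over \eqref{sets 2} and \eqref{sets 5 without gap} with $\ell=0$: it expands $\Lambda_{\{2,4,\dots,2k,2k+1\}}$ (resp.~$\Lambda_{\{1,2k+1\}}$) via Lemma~\ref{Corollary 1.3.4}, pushes the $q$-commutator through with \eqref{q-anticomm 1}--\eqref{q-anticomm 4} using the commutation relations of Lemma~\ref{Lemma 1.3.8}, invokes the induction hypotheses, and finally extracts the (C5)-type relation by comparing two expansions of the same bracket through a linear-independence argument resting on the non-invertibility of $\Lambda$. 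You would need to rebuild your proof along these lines.
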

\begin{proof}We will prove both claims together in one single induction on $k$. For $k = 1$ the first claim coincides with (\ref{rank one 23-13}) and the second follows by direct calculation.

Our induction hypothesis states that (\ref{standard relation}) holds for
\begin{gather}\label{IH 1}
A = \{2,4,\dots,2k-2,2k-1\}, \qquad B = \{1,2k-1\}
\end{gather}
and
\begin{gather}\label{IH 2}
A = \{2,4,\dots,2k-2,2k-1\}, \qquad B = \{1,2k-1,2k\}.
\end{gather}
By $\chi = \big(1^{\otimes (2k-3)}\otimes\Delta\otimes 1\otimes 1\big)\big(1^{\otimes (2k-3)}\otimes\Delta\otimes 1\big)$, (\ref{IH 1}) implies (\ref{standard relation}) for
\begin{gather}\label{IH 3}
A = \{2,4,\dots,2k-2,2k-1,2k,2k+1\}, \qquad B = \{1,2k+1\}
\end{gather}
and by $\chi = 1^{\otimes (2k-1)}\otimes \Delta$, (\ref{IH 2}) gives rise to (\ref{standard relation}) for
\begin{gather}\label{IH 4}
A = \{2,4,\dots,2k-2,2k-1\}, \qquad B = \{1,2k-1,2k,2k+1\}.
\end{gather}

We will first compute
\begin{gather}\label{to compute 1}
[\Lambda_{\{2,4,6,\dots,2k,2k+1\}},\Lambda_{\{1,2k+1\}}]_q.
\end{gather}

By Lemma \ref{Lemma 1.3.8} we have (\ref{commutation}) for $A = \{1,2k-1\}$, $B = \{2,4,\dots,2k-2\}$, which, upon applying $\chi = \big(1^{\otimes (2k-1)}\otimes\tau_R\big)\big(1^{\otimes(2k-3)}\otimes\Delta\otimes 1\big)$, yields (\ref{commutation}) for
\begin{gather}\label{comm rel xx}
A = \{1,2k+1\}, \qquad B = \{2,4,\dots,2k-2,2k-1\},
\end{gather}
whereas by $\chi = \big(1^{\otimes (2k-1)}\otimes\tau_R\big)\big(1^{\otimes(2k-2)}\otimes\tau_R\big)$ we have (\ref{commutation}) for
\begin{gather}\label{comm rel x}
A = \{1,2k+1\}, \qquad B = \{2,4,\dots,2k-2\}.
\end{gather}
By Lemma \ref{Corollary 1.3.4} we may write
\begin{gather*}
\Lambda_{\{2,4,\dots,2k,2k+1\}} = \frac{[\Lambda_{\{2,4,\dots,2k-2,2k-1\}},\Lambda_{\{2k-1,2k,2k+1\}}]_q}{q^{-2}-q^2} \\
\hphantom{\Lambda_{\{2,4,\dots,2k,2k+1\}} =}{} +\frac{\Lambda_{\{2k-1\}}\Lambda_{\{2,4,\dots,2k-2,2k-1,2k,2k+1\}} + \Lambda_{\{2,4,\dots,2k-2\}}\Lambda_{\{2k,2k+1\}}}{q+q^{-1}}.
\end{gather*}
Substituting this in (\ref{to compute 1}) and using (\ref{q-anticomm 4}) by (\ref{comm rel xx}), (\ref{to compute 1}) becomes
\begin{gather*} \frac{[\Lambda_{\{2,4,\dots,2k-2,2k-1\}},[\Lambda_{\{2k-1,2k,2k+1\}},\Lambda_{\{1,2k+1\}}]_q]_q}{q^{-2}-q^2} \\
\quad {}+ \frac{\Lambda_{\{2k-1\}}[\Lambda_{\{2,4,\dots,2k-2,2k-1,2k,2k+1\}},\Lambda_{\{1,2k+1\}}]_q + \Lambda_{\{2,4,\dots,2k-2\}}[\Lambda_{\{2k,2k+1\}},\Lambda_{\{1,2k+1\}}]_q}{q+q^{-1}},
\end{gather*}
where we have used (\ref{comm rel x}) and Lemma \ref{Lemma 1.3.7}.

The relation (\ref{standard relation}) holds for $A = \{2k-1,2k,2k+1\}$, $B = \{1,2k+1\}$ and for $A = \{2k,2k+1\}$, $B = \{1,2k+1\}$, as one sees by applying $\chi = \big(\tau_L\otimes 1^{\otimes (2k-1)}\big)\cdots\big(\tau_L\otimes 1^{\otimes 3}\big)(1\otimes\Delta\otimes 1) $ resp.\ $\chi = \big(\tau_L\otimes 1^{\otimes (2k-1)}\big)\cdots\big(\tau_L\otimes 1^{\otimes 2}\big) $ to~(\ref{rank one 23-13}). With this and (\ref{IH 3}), (\ref{to compute 1}) becomes
\begin{gather*}
 [\Lambda_{\{2,4,\dots,2k-2,2k-1\}}, \Lambda_{\{1,2k-1,2k\}}]_q \\
{}- \frac{\Lambda_{\{2k+1\}}[\Lambda_{\{2,4,\dots,2k-2,2k-1\}},\Lambda_{\{1,2k-1,2k,2k+1\}}]_q+\Lambda_{\{1\}}[\Lambda_{\{2,4,\dots,2k-2,2k-1\}},\Lambda_{\{2k-1,2k\}}]_q}{q+q^{-1}}
\\ {}- \big(q-q^{-1}\big)\Lambda_{\{2k-1\}}\Lambda_{\{1,2,4,\dots,2k-2,2k-1,2k\}}
\\ {} +\frac{q-q^{-1}}{q+q^{-1}}\Lambda_{\{2k-1\}}\big(\Lambda_{\{2k+1\}}\Lambda_{\{1,2,4,\dots,2k-2,2k-1,2k,2k+1\}}+\Lambda_{\{1\}}\Lambda_{\{2,4,\dots,2k-2,2k-1,2k\}}\big)
\\ {} - \big(q-q^{-1}\big) \Lambda_{\{2,4,\dots,2k-2\}}\Lambda_{\{1,2k\}} + \frac{q-q^{-1}}{q+q^{-1}}\Lambda_{\{2,4,\dots,2k-2\}}\big(\Lambda_{\{2k+1\}}\Lambda_{\{1,2k,2k+1\}} + \Lambda_{\{1\}}\Lambda_{\{2k\}}\big).
\end{gather*}
The first $q$-commutator can be expanded by (\ref{IH 2}), the second by~(\ref{IH 4}), the third by Lemma~\ref{Corollary 1.3.4}. Writing everything down, a lot of common terms will cancel, eventually leading to
\begin{gather}
 [\Lambda_{\{2,4,6,\dots,2k,2k+1\}},\Lambda_{\{1,2k+1\}}]_q \nonumber\\
\quad = \big(q^{-2}-q^2\big)\Lambda_{\{1,2,4,\dots,2k\}} + \big(q-q^{-1}\big)\big(\Lambda_{\{2k+1\}}\Lambda_{\{1,2,4,\dots,2k,2k+1\}} + \Lambda_{\{1\}}\Lambda_{\{2,4,\dots,2k\}}\big).\!\!\!\label{first part of claim}
\end{gather}
This proves the first part of the claim.

By $\chi = 1^{\otimes (2k-1)}\otimes\Delta\otimes 1$ respectively $\chi = 1^{\otimes 2k}\otimes \tau_R$, (\ref{first part of claim}) implies that (\ref{standard relation}) holds for
\begin{gather}\label{IH 5}
A = \{2,4,\dots,2k,2k+1,2k+2\}, \qquad B = \{1,2k+2\}
\end{gather}
and
\begin{gather}\label{IH 6}
A = \{2,4,\dots,2k,2k+2\}, \qquad B = \{1,2k+2\}.
\end{gather}
Applying $\chi = \big(\tau_L\otimes 1^{\otimes 2k}\big)\cdots\big(\tau_L\otimes 1^{\otimes 2}\big)$ to (\ref{rank one 23-13}), we may write
\begin{gather}\label{expanded x}
\Lambda_{\{1,2k+1\}} = \frac{[\Lambda_{\{2k+1,2k+2\}},\Lambda_{\{1,2k+2\}}]_q}{q^{-2}-q^2} +\frac{\Lambda_{\{2k+2\}}\Lambda_{\{1,2k+1,2k+2\}} + \Lambda_{\{2k+1\}}\Lambda_{\{1\}}}{q+q^{-1}}.
\end{gather}
We have (\ref{commutation}) for $A = \{1,2k+2\}$ and $B = \{2,4,\dots,2k,2k+1\}$, as follows from Lemma \ref{Lemma 1.3.8} by $\chi = 1^{\otimes (2k-1)}\otimes\Delta\otimes 1$.
Substituting (\ref{expanded x}) in (\ref{to compute 1}) and using (\ref{q-anticomm 1}), (\ref{to compute 1}) becomes
\begin{gather*}
 \frac{[[\Lambda_{\{2,4,\dots,2k,2k+1\}},\Lambda_{\{2k+1,2k+2\}} ]_q,\Lambda_{\{1,2k+2\}} ]_q}{q^{-2}-q^2} \\
 \quad{}+ \frac{ \Lambda_{\{2k+2\}}[\Lambda_{\{2,4,\dots,2k,2k+1\}},\Lambda_{\{1,2k+1,2k+2\}}]_q +\big(q-q^{-1}\big)\Lambda_{\{1\}}\Lambda_{\{2k+1\}}\Lambda_{\{2,4,\dots,2k,2k+1\}}}{q+q^{-1}}.
\end{gather*}
The relation (\ref{standard relation}) holds for $A = \{2,4,\dots,2k,2k+1\}$ and $B = \{2k+1,2k+2\}$ by Lemma \ref{Corollary 1.3.4}. Hence (\ref{to compute 1}) becomes
\begin{gather}
 [\Lambda_{\{2,4,\dots,2k,2k+2\}},\Lambda_{\{1,2k+2\}}]_q \nonumber\\
{} - \frac{\Lambda_{\{2k+1\}} [\Lambda_{\{2,4,\dots,2k,2k+1,2k+2\}},\Lambda_{\{1,2k+2\}} ]_q+\big(q-q^{-1}\big)\Lambda_{\{2k+2\}}\Lambda_{\{2,4,\dots,2k\}}\Lambda_{\{1,2k+2\}}}{q+q^{-1}}\nonumber\\
{} +\frac{ \Lambda_{\{2k+2\}}[\Lambda_{\{2,4,\dots,2k,2k+1\}},\Lambda_{\{1,2k+1,2k+2\}}]_q +\big(q-q^{-1}\big)\Lambda_{\{1\}}\Lambda_{\{2k+1\}}\Lambda_{\{2,4,\dots,2k,2k+1\}}}{q+q^{-1}},\label{remaining commutator}
\end{gather}
where we have used (\ref{commutation}) for $A = \{2,4,\dots,2k\}$, $B = \{1,2k+2\}$, which follows from Lemma \ref{Lemma 1.3.8} by $\chi = 1^{\otimes 2k}\otimes\tau_R$. The first $q$-commutator can be expanded by (\ref{IH 6}), the second by (\ref{IH 5}). On the other hand, we already know an expression for (\ref{to compute 1}), namely (\ref{first part of claim}). Comparing these, the only remaining $q$-commutator in (\ref{remaining commutator}) can be expanded as
\begin{gather*}
 [\Lambda_{\{2,4,\dots,2k,2k+1\}},\Lambda_{\{1,2k+1,2k+2\}}]_q = \big(q^{-2}-q^2\big)\Lambda_{\{1,2,4,\dots,2k,2k+2\}}\\
 \quad{} + \big(q-q^{-1}\big)\big(\Lambda_{\{2k+1\}}\Lambda_{\{1,2,4,\dots,2k,2k+1,2k+2\}} + \Lambda_{\{1,2k+2\}}\Lambda_{\{2,4,\dots,2k\}}\big).
\end{gather*}
This concludes the induction.
\end{proof}

By a completely analogous inductive proof, one can show the following.

\begin{Lemma}The relation \eqref{standard relation} holds for the sets \eqref{sets 3} and \eqref{sets 4 without gap} with $k = 1$.
\end{Lemma}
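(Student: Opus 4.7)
The plan is to run a simultaneous induction mirroring the preceding lemma, taking \eqref{sets 3} (to be proven for all $k$) and \eqref{sets 4 without gap} at $k = 1$ (to be proven for all $\ell$) as the coupled claims. For the base cases, \eqref{sets 3} at $k = 1$ coincides with \eqref{rank one 13-12}, while \eqref{sets 4 without gap} at $k = 1,\ \ell = 0$ (the assertion $[\Lambda_{\{1,2,4\}},\Lambda_{\{2,3\}}]_q = (q^{-2}-q^2)\Lambda_{\{1,3,4\}} + (q-q^{-1})(\Lambda_{\{2\}}\Lambda_{\{1,2,3,4\}} + \Lambda_{\{1,4\}}\Lambda_{\{3\}})$) should follow by lifting the rank-one identity \eqref{rank one 12-23} through a suitably chosen morphism $\chi$ built from $\tau_R$, $\tau_L$ and $\Delta$ that sends $\Lambda_{\{1,2\}} \mapsto \Lambda_{\{1,2,4\}}$ and the other rank-one generators to their natural images in the $4$-fold tensor product, and then reorganising the residual terms via Lemma~\ref{Corollary 1.3.4}.

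For the inductive step, the computational heart is again the evaluation of one target commutator in two different ways. I would first lift the induction hypotheses through morphisms of the form $1^{\otimes n}\otimes\Delta\otimes 1^{\otimes m}$, $1^{\otimes n}\otimes\tau_R$ and $\tau_L\otimes 1^{\otimes m}$ to obtain several auxiliary identities on enlarged index sets, playing the role of the derived hypotheses in the preceding proof. The target commutator is $[\Lambda_{\{1,2k+1\}},\Lambda_{\{1,2,4,\dots,2k\}}]_q$. In the first evaluation I expand $\Lambda_{\{1,2,4,\dots,2k\}}$ through a carefully chosen instance of Lemma~\ref{Corollary 1.3.4} (for example, with $A_1=\{1,2,4,\dots,2k-2\}$, $i=2k-1$, $A_2=\{2k\}$), then apply the switching identities \eqref{q-anticomm 1}--\eqref{q-anticomm 4} to rearrange the resulting nested $q$-commutators, identifying each of them through the induction hypothesis (or its lift) and through Lemmas~\ref{Corollary 1.3.4}, \ref{Lemma 1.3.7} and \ref{Lemma 1.3.8}. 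After collecting and cancelling, the identity \eqref{sets 3} at step $k$ will emerge. In the second evaluation I instead expand $\Lambda_{\{1,2k+1\}}$ through a different Lemma~\ref{Corollary 1.3.4}-decomposition (say with $A_1=\{1\}$, $i=2k$, $A_2=\{2k+1\}$), so that the yet-unknown commutator corresponding to \eqref{sets 4 without gap} at the current value of $\ell$ appears; comparison with the first evaluation isolates it.

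The main obstacle, exactly as in the previous lemma, is the combinatorial bookkeeping: each of the two double expansions produces a constellation of nested $q$-commutators, each of which must be recognised—either as a consequence of the current induction hypothesis (possibly after lifting through an auxiliary morphism), of a commutation statement from Section~\ref{section - proof fundamental relations}, or of Lemma~\ref{Corollary 1.3.4} itself—and the numerous correction terms must ultimately cancel against each other. Choosing the decomposing triples $(A_1,i,A_2)$ and the lifting morphisms with care is the delicate point; but no conceptually new technique beyond those introduced for \eqref{sets 2} and \eqref{sets 5 without gap} with $\ell=0$ is required.
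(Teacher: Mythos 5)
Your high-level plan---a coupled induction mirroring the proof given for \eqref{sets 2} and \eqref{sets 5 without gap} with $\ell=0$, organised around evaluating one target commutator in two ways---is exactly what the paper intends (its entire proof of this lemma is ``by a completely analogous inductive proof''). However, two of the concrete steps you propose would fail. The first is the base case \eqref{sets 4 without gap} with $k=1$, $\ell=0$: this cannot be obtained by lifting \eqref{rank one 12-23} through a morphism $\chi$. Every admissible $\chi$ is a composition of maps $1^{\otimes n}\otimes\alpha\otimes 1^{\otimes m}$ with $\alpha\in\{\Delta,\tau_R,\tau_L\}$, each of which replaces one tensor position by two adjacent ones; consequently $\chi$ sends the source positions $1,2,3$ to consecutive disjoint intervals of target positions. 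A $\chi$ with $\chi(\Lambda_{\{1,2\}})=\Lambda_{\{1,2,4\}}$ and $\chi(\Lambda_{\{2,3\}})=\Lambda_{\{2,3\}}$ would have to send position $1$ onto the whole block $[1;4]$ (so that $\Lambda_{\{1\}}\mapsto\Lambda_{\{1,4\}}$, which has nontrivial content in positions $2$ and $3$) while simultaneously sending position $2$ to position $2$---impossible. This is precisely why, in the preceding lemma, the paper proves the analogous base case ($A=\{2,3\}$, $B=\{1,3,4\}$) by direct calculation rather than by a lift; the same is needed here.

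The second problem is structural: your second evaluation cannot isolate the \eqref{sets 4 without gap} commutator at all. With the decomposition $A_1=\{1\}$, $i=2k$, $A_2=\{2k+1\}$ of Lemma~\ref{Corollary 1.3.4}, every generator occurring in the resulting expansion of $[\Lambda_{\{1,2k+1\}},\Lambda_{\{1,2,4,\dots,2k\}}]_q$ is indexed by a subset of $[1;2k+1]$, whereas the second coupled claim at the relevant step is a relation between $\Lambda_{\{1,2,2k+2\}}$ and $\Lambda_{\{2,3,5,\dots,2k+1\}}$, which requires the position $2k+2$. You must first pass to the $(2k+2)$-fold tensor product and decompose the relevant generator using the \emph{new outside} position---the mirror image of equation \eqref{expanded x} in the paper's proof, which deliberately introduces the extra tensor factor before substituting---so that the unknown commutator can appear and be compared against the first evaluation. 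Moreover, your interior decomposition does not come equipped with the vanishing commutators needed to invoke \eqref{q-anticomm 1}--\eqref{q-anticomm 4}: for instance $[\Lambda_{\{1,2k\}},\Lambda_{\{1,2,4,\dots,2k\}}]=0$ is Corollary~\ref{Corollary 1.4.8}, which is established only later and itself depends on the cases \eqref{sets 5 without gap}--\eqref{sets 6 with gap}. So while the architecture of your argument is the right one, the base case needs an explicit computation and the second expansion needs to be anchored at a newly created boundary position rather than at the interior point $2k$.
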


Somewhat different is our strategy to prove the relation (\ref{standard relation}) for the sets (\ref{sets 4 without gap}) and (\ref{sets 4 with gap}).

\begin{Lemma}The standard relation \eqref{standard relation} holds for the sets \eqref{sets 4 without gap} and \eqref{sets 4 with gap}, i.e., for
\begin{gather*}
A = \{1,2,4,6,\dots,2k,2k+2\ell+2+\delta\}, \\ B = \{2,4,6,\dots,2k,2k+1+\delta,2k+3+\delta,\dots,2k+2\ell+1+\delta\},
\end{gather*}
with $k, \ell\in\N$ and $\delta\in\{0,1\}$.
\end{Lemma}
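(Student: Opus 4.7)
The plan is to avoid an inductive argument and instead exploit the ``by $\chi$'' mechanism described in Section~\ref{Section - Main results}: we shall construct morphisms that carry the already-established relations to the general target. Combining with the previous lemma, which provides (\ref{sets 4 without gap}) for $k=1$ and every $\ell$, only two reductions are needed.

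First, I reduce (\ref{sets 4 with gap}) at $(k,\ell)$ to (\ref{sets 4 without gap}) at $(k,\ell)$. The two pairs of sets differ precisely by the uniform shift $p \mapsto p+1$ applied to every index $p \geq 2k+1$, and this shift commutes with $\cap$, $\cup$ and $\setminus$, so it simultaneously sends every one of the seven generators appearing in (\ref{standard relation}) for (\ref{sets 4 without gap}) to the corresponding generator for (\ref{sets 4 with gap}). I will realize this shift by inserting a single $\tau_R$ (or, dually, a $\tau_L$) at the boundary between the left block $\{1,2,4,\dots,2k\}$ and the middle block beginning at position $2k+1$. Using Proposition~\ref{prop derived} and Remark~\ref{Remark comodules}, the right extension processes of both $\Lambda_A$ and $\Lambda_B$ can be rearranged so that this boundary-$\tau_R$ acts on content lying in $\mathcal{I}_R$, and the morphism is therefore well-defined.

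Second, I reduce (\ref{sets 4 without gap}) at $(k,\ell)$ to (\ref{sets 4 without gap}) at $(1,\ell)$. The target sets differ from the source by inserting $k-1$ new common elements at what will become positions $4,6,\dots,2k$ and $k-1$ new common gaps at positions $5,7,\dots,2k-1$. I take $\chi$ to be an alternating composition of $\Delta$'s (for the new elements) and $\tau_R$'s (for the new gaps) acting on the left block in the appropriate order. Because every inserted element lies in $A \cap B$ and every inserted gap lies outside $A \cup B$, $\chi$ respects all Boolean combinations, so $\chi(\Lambda_{X_{(1,\ell)}}) = \Lambda_{X_{(k,\ell)}}$ for every set $X$ appearing on either side of the standard relation. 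Composing the two morphisms yields (\ref{sets 4 without gap}) and (\ref{sets 4 with gap}) for every $(k,\ell)$.

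The principal obstacle is the bookkeeping required to rewrite the extension processes for $\Lambda_A$ and $\Lambda_B$ into forms on which each $\tau_R$ in the morphism actually acts at a position whose current content lies in $\mathcal{I}_R$. Proposition~\ref{prop derived} is the key tool here, as it allows us to push all of the $\Delta$'s to the final stage of the construction, after which both morphisms reduce to simple boundary operations on the skeleton $\Lambda_{\{1,3,5,\dots\}}$ and the required identities become routine to verify.
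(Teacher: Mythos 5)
There is a genuine gap: neither of your two reductions can be realized by a morphism $\chi$ of the kind the transfer mechanism requires, and this is not a bookkeeping issue but a structural obstruction. The mechanism needs a \emph{single} $\chi$, built from maps $1^{\otimes n}\otimes\alpha\otimes 1^{\otimes m}$ with $\alpha\in\{\Delta,\tau_R,\tau_L\}$, that simultaneously sends all seven generators of \eqref{standard relation} for $(A',B')$ to those for $(A,B)$. For your first reduction, the shift $p\mapsto p+1$ on $p\geq 2k+1$ acts as a no-op on $\Lambda_{A\cap B}=\Lambda_{\{2,4,\dots,2k\}}$, as a pure translation on $\Lambda_{B\setminus(A\cap B)}=\Lambda_{\{2k+1,\dots,2k+2\ell+1\}}$, and as a genuine gap extension on $\Lambda_{A\setminus(A\cap B)}=\Lambda_{\{1,2k+2\ell+2\}}$; no single insertion at one tensor position does all three. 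Worse, the tensor legs sitting in the gap of $\Lambda_{\{1,2k+2\ell+2\}}$ at the boundary position are first legs of $\tau_R$-outputs such as $K$, $FK$, $F^2K$, which do not lie in $\mathcal{I}_R$ (nor in $\mathcal{I}_L$), so a ``boundary $\tau_R$'' is not even defined there. Proposition~\ref{prop derived} and Remark~\ref{Remark comodules} only let you commute $\Delta$'s past the hole-creating coactions and trade a terminal $\tau_R$ for an initial $\tau_L$ via the cotensor property; they never let you apply a coaction at an \emph{interior} position of an already-built generator. The same obstruction kills your second reduction: there the single middle slot $\{2\}=A'\cap B'$ must grow into the holed set $\{2,4,\dots,2k\}$, but in every extension process holes can only be created at the current extreme left or right (where the coideal properties guarantee the leg lies in $\mathcal{I}_L$ resp.\ $\mathcal{I}_R$); a non-extremal slot can only be fattened into a solid interval by $\Delta$'s. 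This is precisely why $k$ and $\ell$ cannot be transferred away and why \eqref{sets 4 without gap}, \eqref{sets 4 with gap} appear as an infinite \emph{family} of fundamental cases rather than a single one.

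The paper's actual proof is therefore necessarily non-reductive: it expands $\Lambda_A$ via the already-proved case \eqref{sets 2}, reorders nested $q$-commutators with \eqref{q-anticomm 3}, invokes the commutation Lemmas~\ref{Lemma 1.3.7} and~\ref{Lemma 1.3.8} and Lemma~\ref{Corollary 1.3.4}, and arrives not at the desired identity $\Theta=\Xi$ directly but at the relation $(\Theta-\Xi)\otimes 1 = \tfrac{1}{q+q^{-1}}\Lambda_{\{2k+2\ell+3+\delta\}}\big(1^{\otimes(2k+2\ell+1+\delta)}\otimes\Delta\big)(\Theta-\Xi)$; it then concludes $\Theta=\Xi$ by a linear-independence argument in the last tensor leg together with the observation that $\Lambda$ is not invertible in $U_q(\mathfrak{sl}_2)$. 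Some argument of this genuinely algebraic kind (or an induction on $k$ and $\ell$ as in the neighbouring lemmas) is unavoidable here; if you want to salvage your approach you would at minimum have to prove a new structural lemma allowing hole creation at interior slots, which the coideal/comodule framework of the paper does not provide.
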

\begin{proof}We need to rewrite
\begin{gather}\label{to compute 2}
[\Lambda_{\{1,2,4,6,\dots,2k,2k+2\ell+2+\delta\}}, \Lambda_{\{2,4,6,\dots,2k,2k+1+\delta,2k+3+\delta,\dots,2k+2\ell+1+\delta\}}]_q.
\end{gather}
First observe that
\begin{gather}\label{comm rel yy}
[\Lambda_{\{1,2k+2\ell+3+\delta\}},\Lambda_{\{2,4,6,\dots,2k,2k+1+\delta,2k+3+\delta,\dots,2k+2\ell+1+\delta\}}] = 0.
\end{gather}
This follows from Lemma \ref{Lemma 1.3.8} with $k+\ell+\delta$ instead of $k$, by
\[\chi = \big(1^{\otimes (2k-1)}\otimes\Delta\otimes 1^{\otimes (2\ell+2)}\big)^{1-\delta}\big(1^{\otimes (2k+2\ell+2\delta)}\otimes\tau_R\big).\]
The relation (\ref{standard relation}) for (\ref{sets 2}) with $k+1$ instead of $k$, acted upon with
\[
\chi = \big(1^{\otimes 2k}\otimes\Delta\otimes 1^{\otimes (2\ell+1+\delta)}\big)\cdots\big(1^{\otimes 2k}\otimes\Delta\otimes 1^{\otimes 2}\big),
\]
gives rise to the identity
\begin{gather*}
 \Lambda_{\{1,2,4,6,\dots,2k,2k+2\ell+2+\delta\}} = \frac{[\Lambda_{\{2,4,6,\dots,2k,2k+2\ell+2+\delta,2k+2\ell+3+\delta\}},\Lambda_{\{1,2k+2\ell+3+\delta\}}]_q}{q^{-2}-q^2} \\ \qquad{} + \frac{\Lambda_{\{2k+2\ell+3+\delta\}}\Lambda_{\{1,2,4,\dots,2k,2k+2\ell+2+\delta,2k+2\ell+3+\delta\}} + \Lambda_{\{1\}}\Lambda_{\{2,4,\dots,2k,2k+2\ell+2+\delta\}}}{q+q^{-1}}.
\end{gather*}
Using (\ref{q-anticomm 3}) by (\ref{comm rel yy}), we may write (\ref{to compute 2}) as
\begin{gather}
 \frac{[[\Lambda_{\{2,4,\dots,2k,2k+2\ell+2+\delta,2k+2\ell+3+\delta\}},\Lambda_{\{2,4,\dots,2k,2k+1+\delta,2k+3+\delta,\dots,2k+2\ell+1+\delta\}} ]_q,\Lambda_{\{1,2k+2\ell+3+\delta\}}]_q}{q^{-2}-q^2} \nonumber\\
{} + \frac{ \Lambda_{\{2k+2\ell+3+\delta\}}[\Lambda_{\{1,2,4,\dots,2k,2k+2\ell+2+\delta,2k+2\ell+3+\delta\}}, \Lambda_{\{2,4,\dots,2k,2k+1+\delta,2k+3+\delta,\dots,2k+2\ell+1+\delta\}}]_q}{q+q^{-1}} \nonumber\\
{}+ \frac{\Lambda_{\{1\}}[\Lambda_{\{2,4,\dots,2k,2k+2\ell+2+\delta\}},\Lambda_{\{2,4,\dots,2k,2k+1+\delta,2k+3+\delta,\dots,2k+2\ell+1+\delta\}}]_q}{q+q^{-1}}.\label{long rel 1}
\end{gather}
The relation (\ref{standard relation}) is satisfied for
\begin{gather*}
A = \{2,4,6,\dots,2k,2k+2\ell+2+\delta\},\\ B = \{2,4,6,\dots,2k,2k+1+\delta,2k+3+\delta,\dots,2k+2\ell+1+\delta\},
\end{gather*}
as follows from (\ref{sets 3}) with $\ell+1$ instead of $k$, by
\[
\chi = \left(\overrightarrow{\prod_{n = 2\ell+2+\delta}^{2k+2\ell-2+\delta}}\big(\tau_L\otimes 1^{\otimes (n+1)}\big)\big(\Delta\otimes 1^{\otimes n}\big)\right)\big(\tau_L\otimes 1^{\otimes (2\ell+2)}\big)^{\delta}
\]
and putting a factor $1\otimes$ in front. By $\chi = 1^{\otimes (2k+2\ell+1+\delta)}\otimes\Delta$, we also have (\ref{standard relation}) for
\begin{gather*}
A = \{2,4,6,\dots,2k,2k+2\ell+2+\delta,2k+2\ell+3+\delta\}, \\ B = \{2,4,6,\dots,2k,2k+1+\delta,2k+3+\delta,\dots,2k+2\ell+1+\delta\}.
\end{gather*}
This helps us expand the first and third line of (\ref{long rel 1}), such that (\ref{to compute 2}) becomes
\begin{gather}
 [\Lambda_{\{2k+1+\delta,2k+3+\delta,\dots,2k+2\ell+1+\delta,2k+2\ell+2+\delta,2k+2\ell+3+\delta\}},\Lambda_{\{1,2k+2\ell+3+\delta\}}]_q \nonumber\\
{}-\frac{\Lambda_{\{2,4\dots,2k\}}}{q+q^{-1}} [\Lambda_{\{2,4,\dots,2k,2k+1+\delta,2k+3+\delta,\dots,2k+2\ell+1+\delta,2k+2\ell+2+\delta,2k+2\ell+3+\delta\}},\Lambda_{\{1,2k+2\ell+3+\delta\}}]_q \nonumber\\
{} -\frac{\Lambda_{\{2k+1+\delta,2k+3+\delta,\dots,2k+2\ell+1+\delta\}}}{q+q^{-1}} [\Lambda_{\{2k+2\ell+2+\delta,2k+2\ell+3+\delta\}},\Lambda_{\{1,2k+2\ell+3+\delta\}}]_q \nonumber\\
 {} + \frac{\Lambda_{\{2k+2\ell+3+\delta\}}}{q+q^{-1}} [\Lambda_{\{1,2,4,\dots,2k,2k+2\ell+2+\delta,2k+2\ell+3+\delta\}}, \Lambda_{\{2,4,\dots,2k,2k+1+\delta,2k+3+\delta,\dots,2k+2\ell+1+\delta\}}]_q \nonumber\\
{} - \big(q-q^{-1}\big)\Lambda_{\{1\}}\Lambda_{\{2k+1+\delta,2k+3+\delta,\dots,2k+2\ell+1+\delta,2k+2\ell+2+\delta\}} \nonumber\\
{} + \frac{q-q^{-1}}{q+q^{-1}}\Lambda_{\{1\}}\Lambda_{\{2,4,\dots,2k\}}\Lambda_{\{2,4,\dots,2k,2k+1+\delta,2k+3+\delta,\dots,2k+2\ell+1+\delta,2k+2\ell+2+\delta\}} \nonumber\\
{} + \frac{q-q^{-1}}{q+q^{-1}} \Lambda_{\{1\}}\Lambda_{\{2k+2\ell+2+\delta\}}\Lambda_{\{2k+1+\delta,2k+3+\delta,\dots,2k+2\ell+1+\delta\}}.\label{long expr 1}
\end{gather}
Here we have used the commutation relations
\begin{gather*}
[\Lambda_{\{2,4,\dots,2k\}},\Lambda_{\{1,2k+2\ell+3+\delta\}}] = 0, \\
[\Lambda_{\{2,4,\dots,2k\}},\Lambda_{\{2,4,\dots,2k,2k+1+\delta,2k+3+\delta,\dots,2k+2\ell+1+\delta,2k+2\ell+2+\delta,2k+2\ell+3+\delta\}}] = 0, \\
[\Lambda_{\{2k+1+\delta,2k+3+\delta,\dots,2k+2\ell+1+\delta\}},\Lambda_{\{2k+2\ell+2+\delta,2k+2\ell+3+\delta\}}] = 0, \\
[\Lambda_{\{2k+1+\delta,2k+3+\delta,\dots,2k+2\ell+1+\delta\}},\Lambda_{\{1,2k+2\ell+3+\delta\}}] = 0,
\end{gather*}
which follow from Lemmas \ref{Lemma 1.3.7} and \ref{Lemma 1.3.8}.

The relation (\ref{standard relation}) holds for the sets
\begin{gather}
\begin{split}&
A = \{2k+1+\delta,2k+3+\delta,\dots,2k+2\ell+1+\delta,2k+2\ell+2+\delta,2k+2\ell+3+\delta\},  \\
& B = \{1,2k+2\ell+3+\delta\};\end{split} \label{int sets 1}\\
\label{int sets 2}
\begin{split}&
A = \{2k+2\ell+2+\delta,2k+2\ell+3+\delta\},\\
& B = \{1,2k+2\ell+3+\delta\};\end{split} \vspace{2mm} \\
\begin{split}&
A = \{2,4,\dots,2k,2k+1+\delta,2k+3+\delta,\dots, \\
&\hphantom{A = \{}{} 2k+2\ell+1+\delta,2k+2\ell+2+\delta,2k+2\ell+3+\delta\}, \\
& B = \{1,2k+2\ell+3+\delta\}.\end{split}\label{int sets 3}
\end{gather}
For (\ref{int sets 1}), this follows from (\ref{sets 2}) with $\ell+1$ instead of $k$, upon applying
\[
\chi = \left(\overrightarrow{\prod_{n = 2\ell+3}^{2k+2\ell+1+\delta}}\big(\tau_L\otimes 1^{\otimes n}\big)\right)\big(1^{\otimes (2\ell+1)}\otimes\Delta\otimes1\big).
\]
The claim for (\ref{int sets 2}) follows from (\ref{rank one 23-13}), by $\chi = \overrightarrow{\prod\limits_{n = 2}^{2k+2\ell+1+\delta}}\big(\tau_L\otimes 1^{\otimes n}\big)$,
and finally (\ref{int sets 3}) follows from (\ref{sets 2}) with $k+\ell+\delta$ instead of $k$, by
$\chi = \big(1^{\otimes (2k-1)}\otimes \Delta\otimes 1^{\otimes (2\ell+2)}\big)^{1-\delta}\big(1^{\otimes (2k+2\ell-1+2\delta)}\otimes\Delta\otimes 1\big)$.
Bringing the fourth line of (\ref{long expr 1}) to the left-hand side and expanding everything as explained, we find
\begin{gather*}
[\Lambda_{\{1,2,4,6,\dots,2k,2k+2\ell+2+\delta\}}, \Lambda_{\{2,4,6,\dots,2k,2k+1+\delta,2k+3+\delta,\dots,2k+2\ell+1+\delta\}}]_q\\
- \frac{\Lambda_{\{2k+2\ell+3+\delta\}}}{q+q^{-1}}[\Lambda_{\{1,2,4,\dots,2k,2k+2\ell+2+\delta,2k+2\ell+3+\delta\}}, \Lambda_{\{2,4,\dots,2k,2k+1+\delta,2k+3+\delta,\dots,2k+2\ell+1+\delta\}}]_q \\
= \big(q^{-2}-q^2\big)\Lambda_{\{1,2k+1+\delta,2k+3+\delta,\dots,2k+2\ell+1+\delta,2k+2\ell+2+\delta\}}\\
+ \big(q-q^{-1}\big)\Lambda_{\{2,4,\dots,2k\}}\Lambda_{\{1,2,4,\dots,2k,2k+1+\delta,2k+3+\delta,\dots,2k+2\ell+1+\delta,2k+2\ell+2+\delta\}}\\
 + \big(q-q^{-1}\big) \Lambda_{\{1,2k+2\ell+2+\delta\}}\Lambda_{\{2k+1+\delta,2k+3+\delta,\dots,2k+2\ell+1+\delta\}} \\
 + \big(q-q^{-1}\big)\Lambda_{\{2k+2\ell+3+\delta\}}\Lambda_{\{1,2k+1+\delta,2k+3+\delta,\dots,2k+2\ell+1+\delta,2k+2\ell+2+\delta,2k+2\ell+3+\delta\}} \\ -\frac{q-q^{-1}}{q+q^{-1}}\Lambda_{\{2k+2\ell+3+\delta\}}\Lambda_{\{2,4,\dots,2k\}}\Lambda_{\{1,2,4,\dots,2k,2k+1+\delta,2k+3+\delta,\dots,2k+2\ell+1 +\delta,2k+2\ell+2+\delta,2k+2\ell+3+\delta\}} \\
 -\frac{q-q^{-1}}{q+q^{-1}}\Lambda_{\{2k+2\ell+3+\delta\}} \Lambda_{\{1,2k+2\ell+2+\delta,2k+2\ell+3+\delta\}}\Lambda_{\{2k+1+\delta,2k+3+\delta,\dots,2k+2\ell+1+\delta\}}.
\end{gather*}
Now let us denote by $\Theta$ and $\Xi$ respectively the expressions
\begin{gather*}
\Theta = [\Lambda_{\{1,2,4,6,\dots,2k,2k+2\ell+2+\delta\}}, \Lambda_{\{2,4,6,\dots,2k,2k+1+\delta,2k+3+\delta,\dots,2k+2\ell+1+\delta\}}]_q, \\
\Xi =\big(q^{-2}-q^2\big)\Lambda_{\{1,2k+1+\delta,2k+3+\delta,\dots,2k+2\ell+1+\delta,2k+2\ell+2+\delta\}}\\
\hphantom{\Xi =}{} + \big(q-q^{-1}\big)\Lambda_{\{2,4,\dots,2k\}}\Lambda_{\{1,2,4,\dots,2k,2k+1+\delta,2k+3+\delta,\dots,2k+2\ell+1+\delta,2k+2\ell+2+\delta\}}\\
\hphantom{\Xi =}{} + \big(q-q^{-1}\big) \Lambda_{\{1,2k+2\ell+2+\delta\}}\Lambda_{\{2k+1+\delta,2k+3+\delta,\dots,2k+2\ell+1+\delta\}},
\end{gather*}
both in the $(2k+2\ell+2+\delta)$-fold tensor product. Then the relation above becomes
\begin{gather*}
 \Theta \otimes 1 -\frac{1}{q+q^{-1}}\Lambda_{\{2k+2\ell+3+\delta\}} \big(1^{\otimes (2k+2\ell+1+\delta)}\otimes\Delta\big) \Theta\\
 \qquad{}= \Xi\otimes 1 -\frac{1}{q+q^{-1}}\Lambda_{\{2k+2\ell+3+\delta\}} \big(1^{\otimes (2k+2\ell+1+\delta)}\otimes\Delta\big) \Xi,
\end{gather*}
or equivalently
\begin{gather}\label{eq A-B}
(\Theta-\Xi)\otimes 1 = \frac{1}{q+q^{-1}}\Lambda_{\{2k+2\ell+3+\delta\}} \big(1^{\otimes (2k+2\ell+1+\delta)}\otimes\Delta\big)(\Theta-\Xi).
\end{gather}
We want to show that $\Theta = \Xi$.

Let us start by writing $\Theta-\Xi$ explicitly as a tensor product, i.e.,
\[
\Theta-\Xi = \sum_{i\in I}a_1^{(i)}\otimes\cdots\otimes a_{2k+2\ell+2+\delta}^{(i)},
\]
for certain $a_j^{(i)}\in U_q(\mathfrak{sl}_2)$ and some finite index set $I$, and where we have grouped the elements in such a way that the set
\begin{gather}\label{convention on indices}
\big\{a_1^{(i)}\otimes\dots\otimes a_{2k+2\ell+1+\delta}^{(i)}\colon i\in I\big\}
\end{gather}
is linearly independent. The equality (\ref{eq A-B}) then asserts
\[
\sum_{i\in I}a_1^{(i)}\otimes\dots\otimes a_{2k+2\ell+1+\delta}^{(i)}\otimes\left(a_{2k+2\ell+2+\delta}^{(i)}\otimes 1 - \frac{1}{q+q^{-1}} (1\otimes\Lambda )\Delta\big(a_{2k+2\ell+2+\delta}^{(i)}\big) \right)=0,
\]
which by our convention (\ref{convention on indices}) implies
\[
a_{2k+2\ell+2+\delta}^{(i)}\otimes 1 = \frac{1}{q+q^{-1}} (1\otimes\Lambda )\Delta\big(a_{2k+2\ell+2+\delta}^{(i)}\big)
\]
for every $i\in I$. But this now implies that every $a_{2k+2\ell+2+\delta}^{(i)}$ vanishes, since if not, then $\Delta\big(a_{2k+2\ell+2+\delta}^{(i)}\big) = \sum\limits_{j\in J}b_1^{(j)}\otimes b_2^{(j)}$ is nontrivial and must be such that
\[
a_{2k+2\ell+2+\delta}^{(i)}\otimes 1 = \frac{1}{q+q^{-1}}\sum_{j\in J}b_1^{(j)}\otimes \Lambda b_2^{(j)}.
\]
But by comparing degrees in the generators $E$ and $F$, it is clear that $\Lambda b_2^{(j)}\neq 1$ for every possible $b_2^{(j)}\in U_q(\mathfrak{sl}_2)$, i.e., $\Lambda$ has no inverse. So we have $a_{2k+2\ell+2+\delta}^{(i)} = 0$ for every $i$. Hence we have shown that indeed $\Theta = \Xi$. This concludes the proof.
\end{proof}

The two preceding proofs manifest the general strategy that can be used to show Proposition~\ref{prop - fundamental relations}. Therefore, and for the sake of brevity, we will only concisely mention how one can show the remaining relations.

\begin{Lemma}The standard relation \eqref{standard relation} holds for the sets~\eqref{sets 1}.
\end{Lemma}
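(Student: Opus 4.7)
The plan is to proceed by induction on $k$. The base case $k = 1$ reduces to the standard relation for $A = \{1,2\}$, $B = \{2,3\}$, which is exactly the rank-one identity \eqref{rank one 12-23}.

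For the inductive step, I would decompose $\Lambda_B = \Lambda_{\{2,4,\dots,2k,2k+1\}}$ via Lemma~\ref{Corollary 1.3.4} with $A_1 = \{2,4,\dots,2k-2\}$, $i = 2k-1$, and $A_2 = \{2k,2k+1\}$, namely
\[
\Lambda_B = \frac{[\Lambda_{\{2,4,\dots,2k-2,2k-1\}}, \Lambda_{\{2k-1,2k,2k+1\}}]_q}{q^{-2}-q^2} - \frac{\Lambda_{\{2k-1\}}\Lambda_{\{2,4,\dots,2k-2,2k-1,2k,2k+1\}} + \Lambda_{\{2,4,\dots,2k-2\}}\Lambda_{\{2k,2k+1\}}}{q+q^{-1}}.
\]
Substituting this into $[\Lambda_A,\Lambda_B]_q$ and applying the rearrangement identities \eqref{q-anticomm 1}--\eqref{q-anticomm 4} reduces the question to evaluating a collection of simpler $q$-commutators, each of which should be expressible via the induction hypothesis, via the already-established fundamental cases \eqref{sets 2}, \eqref{sets 3} with $k=1$, \eqref{sets 4 without gap}, and \eqref{sets 4 with gap}, or via a further direct application of Lemma~\ref{Corollary 1.3.4}. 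This mirrors the approach used earlier in the section for the pairs \eqref{sets 2}/\eqref{sets 5 without gap} and \eqref{sets 4 without gap}/\eqref{sets 4 with gap}.

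If the direct manipulation only yields an identity of the form $\Lambda_{\{m\}}\cdot[\Lambda_A,\Lambda_B]_q = \Lambda_{\{m\}}\cdot(\text{RHS of }\eqref{standard relation})$ in an enlarged tensor product $\Uqsl^{\otimes(2k+2)}$, I would conclude as at the end of the preceding lemma: writing the discrepancy as $\Theta\otimes 1$, expanding $\Theta$ in a linearly independent basis of the first factors, and using that the Casimir $\Lambda$ has no inverse in $\Uqsl$ to force $\Theta = 0$.

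The main obstacle I expect is the bookkeeping: each invocation of \eqref{q-anticomm 1}--\eqref{q-anticomm 4} demands an ordinary commutation that is not automatic when the two sets overlap without being nested. Lemmas~\ref{lemma Gamma of singleton}, \ref{Lemma 1.3.7}, and \ref{Lemma 1.3.8} cover several such cases, but others will likely need to be derived on the fly by applying a morphism $\chi$ built from $\Delta$, $\tau_R$, and $\tau_L$ to a trivial base commutation, in the spirit of the proof of Lemma~\ref{Lemma 1.3.8}. A secondary difficulty is choosing the inductive decomposition so that every intermediate $q$-commutator falls within the scope of Proposition~\ref{prop - fundamental relations} rather than generating new, unproven fundamental relations.
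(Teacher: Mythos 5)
Your base case is right, but the inductive step breaks down at the very first move. After substituting your decomposition of $\Lambda_B=\Lambda_{\{2,4,\dots,2k,2k+1\}}$ into $[\Lambda_A,\Lambda_B]_q$, the problematic term is the nested $q$-commutator $[\Lambda_A,[\Lambda_{\{2,4,\dots,2k-2,2k-1\}},\Lambda_{\{2k-1,2k,2k+1\}}]_q]_q$, and to rearrange it you must invoke \eqref{q-anticomm 1} or \eqref{q-anticomm 2}, which require respectively $[\Lambda_A,\Lambda_{\{2k-1,2k,2k+1\}}]=0$ or $[\Lambda_A,\Lambda_{\{2,4,\dots,2k-2,2k-1\}}]=0$. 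Neither holds: the pair $A=\{1,2,4,\dots,2k\}$, $\{2,4,\dots,2k-2,2k-1\}$ is exactly of the form \eqref{most general sets 1} (take $A_1=\{1\}$, $A_2=\{2,4,\dots,2k-2\}$, $A_3=\{2k-1\}$, $A_4=\{2k\}$), so it satisfies the nontrivial relation \eqref{standard relation} rather than commuting; and $\{2k-1,2k,2k+1\}$ interleaves with $A$ in a crossing configuration ($2k-1$ between $2k-2$ and $2k$, $2k+1$ beyond $2k$) that no commutation lemma in the paper covers. So the rearrangement identities simply do not apply to your choice of splitting point, and the induction stalls. (A minor additional slip: solving \eqref{standard relation} for the middle generator gives $+\tfrac{1}{q+q^{-1}}(\cdots)$, not $-$, since $(q-q^{-1})/(q^{-2}-q^2)=-1/(q+q^{-1})$.)

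The paper's proof is not an induction on $k$ at all and it decomposes the \emph{other} argument: it rewrites $\Lambda_A=\Lambda_{\{1,2,4,\dots,2k\}}$ via \eqref{sets 2} pushed to the sets $\{2,4,\dots,2k,2k+2\}$ and $\{1,2k+2\}$, i.e., it introduces a fresh tensor leg $2k+2$ lying strictly to the right of everything. The payoff is that the inner factor $\Lambda_{\{1,2k+2\}}$ straddles $B=\{2,4,\dots,2k,2k+1\}$ entirely, so $[\Lambda_{\{1,2k+2\}},\Lambda_B]=0$ by Lemma~\ref{Lemma 1.3.8} (transported by $1^{\otimes(2k-1)}\otimes\Delta\otimes 1$), and \eqref{q-anticomm 3} becomes applicable. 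The remaining $q$-commutators are then expanded using \eqref{sets 4 without gap} with $\ell=0$, the relation \eqref{rank one 13-12} acted upon by a suitable $\chi$, together with \eqref{rank one 23-13} and \eqref{sets 2}, all of which are available at that point. If you want to keep an inductive flavour, you would at minimum need to redesign the splitting so that one of the two pieces provably commutes with $\Lambda_A$ — the natural candidates are sets of the shape $\{1,\,\text{last}\}$ handled by Lemma~\ref{Lemma 1.3.8} — which is precisely what the paper's choice achieves.
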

\begin{proof}[Sketch of the proof] Use (\ref{standard relation}) for $A = \{2,4,\dots,2k,2k+2\}$, $B = \{1,2k+2\}$, as follows from~(\ref{sets 2}), to rewrite $\Lambda_{\{1,2,4,\dots,2k\}}$ in
\[
[\Lambda_{\{1,2,4,\dots,2k\}},\Lambda_{\{2,4,\dots,2k,2k+1\}}]_q.
\]
Use (\ref{q-anticomm 3}) to switch the order of the nested $q$-commutators. Use (\ref{sets 4 without gap}) with $\ell = 0$ and (\ref{rank one 13-12}) acted upon with $\chi = \overrightarrow{\prod\limits_{\ell = 1}^{k-1}}\big(\tau_L\otimes 1^{\otimes (2\ell+1)}\big)\big(\Delta\otimes 1^{\otimes 2\ell}\big)$ to expand further. The remaining $q$-commutators can be worked out from~(\ref{rank one 23-13}) and~(\ref{sets 2}).
\end{proof}

\begin{Lemma}The standard relation \eqref{standard relation} holds for the sets \eqref{sets 5 without gap} and \eqref{sets 5 with gap}.
\end{Lemma}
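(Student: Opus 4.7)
The plan is to mirror closely the strategy that succeeded for (C4)/(C4$'$): treat (C5) and (C5$'$) simultaneously by parameterising
\[
A=\{2,4,\dots,2k,2k+1+\delta,2k+3+\delta,\dots,2k+2\ell+1+\delta\},
\]
\[
B=\{1,2k+1+\delta,2k+3+\delta,\dots,2k+2\ell+1+\delta,2k+2\ell+2+\delta\},
\]
with $\delta\in\{0,1\}$, and proceed by induction on $\ell$. The base case $\ell=0$, $\delta=0$ is the first lemma of Section~\ref{subsection - fundamental cases}, and $\ell=0$, $\delta=1$ should follow from it either by applying a single $\tau_L$ in the appropriate tensor slot or by rerunning the joint induction that proved (C2) alongside (C5) with $\ell=0$.

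For the inductive step I would use (C2) (with $k+\ell$ in place of $k$), transported through an appropriate morphism $\chi$ built from factors of the form $1^{\otimes n}\otimes\Delta\otimes 1^{\otimes r}$ and $\tau_L\otimes 1^{\otimes r}$, to rewrite $\Lambda_B$ (or equivalently the simpler component $\Lambda_{\{1,2k+2\ell+2+\delta\}}$ on the right-hand side of (C5)) as $\tfrac{1}{q^{-2}-q^2}[\Lambda_{B_1},\Lambda_{B_2}]_q$ plus products of simpler generators, chosen so that the $q$-commutators of $\Lambda_A$ with $\Lambda_{B_1}$ and $\Lambda_{B_2}$ are already accessible. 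Substituting the expression into $[\Lambda_A,\Lambda_B]_q$ and flattening the resulting nested $q$-commutator with (\ref{q-anticomm 3}) or (\ref{q-anticomm 4})—whose commutation hypotheses are supplied by Lemmas~\ref{lemma Gamma of singleton}, \ref{Lemma 1.3.7} and~\ref{Lemma 1.3.8}—produces two inner $q$-commutators. The first should be expandable via (C4) or (C4$'$) (the parity being selected by $\delta$), since the set $A$ in (C5)/(C5$'$) is precisely the $B$-side of (C4)/(C4$'$); the second should reduce either to Lemma~\ref{Corollary 1.3.4} or to the inductive hypothesis on $\ell$. Collecting common terms is expected to yield either the target relation outright or an equation of the form
\[
(\Theta-\Xi)\otimes 1 \;=\; \frac{1}{q+q^{-1}}\,\Lambda_{\{m\}}\bigl(1^{\otimes(m-1)}\otimes\Delta\bigr)(\Theta-\Xi)
\]
in the $m$-fold tensor product with $m=2k+2\ell+2+\delta$, exactly as closed the previous lemma.

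The main obstacle is twofold. First, and as for (C4)/(C4$'$), one must finish the argument from the displayed identity above: decompose $\Theta-\Xi$ as a sum of simple tensors whose first $m-1$ slots are linearly independent, extract the slot-wise equation $a\otimes 1=\tfrac{1}{q+q^{-1}}(1\otimes\Lambda)\Delta(a)$, and rule it out for $a\neq 0$ by comparing $E$-$F$-degrees, so that $\Lambda$ has no inverse in $\Uqsl$ and $a$ must vanish. Second and more painful is the combinatorial bookkeeping: for every intermediate commutator appearing in the expansion, one has to exhibit a morphism $\chi$ that transports a previously established instance of (C1)--(C4$'$) (or Lemma~\ref{Corollary 1.3.4} or Lemma~\ref{Lemma 1.3.8}) to the precise pair of sets at hand, and to check that the admissibility conditions of the discrete-interval conventions of Section~\ref{Section - Defining the higher rank generators} and of the parities $\delta\in\{0,1\}$ are preserved at every step. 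I expect this matching—rather than any single algebraic manipulation—to be where the bulk of the proof's length will come from, which is presumably why the authors opt to present only the detailed proof for (C4)/(C4$'$) and to sketch (C5)/(C5$'$) as an ``analogous'' argument.
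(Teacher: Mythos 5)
Your toolbox is the right one, but both the architecture of your argument and the one concrete step you commit to diverge from the paper's actual proof, and the divergences matter. The paper does \emph{not} prove \eqref{sets 5 without gap}/\eqref{sets 5 with gap} by induction on $\ell$, it never invokes \eqref{sets 4 without gap} or \eqref{sets 4 with gap}, and it does not need the $(\Theta-\Xi)$ cancellation argument. Instead it works with the auxiliary pair $A'=A$ and $B'=B\setminus\{1\}=\{2k+1+\delta,\dots,2k+2\ell+2+\delta\}$: the commutator $[\Lambda_{A'},\Lambda_{B'}]_q$ is already known, being the image of \eqref{sets 1} (with $\ell+1$ in place of $k$) under a transporting morphism built from factors $\tau_L\otimes 1^{\otimes r}$ and $\Delta\otimes 1^{\otimes r}$ with $1\otimes{}$ prepended. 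It is then expanded a \emph{second} time by rewriting $\Lambda_{B'}$ through \eqref{sets 3} (with $\ell+1$ for $k$), flattening with \eqref{q-anticomm 2}, and expanding the inner commutators by transported instances of \eqref{sets 2} and \eqref{sets 3}; the only surviving unknown is exactly $\frac{1}{q+q^{-1}}\Lambda_{\{1\}}[\Lambda_A,\Lambda_B]_q$, and comparing the two expansions determines it (one then strips off $\Lambda_{\{1\}}$, which is not a zero divisor). This ``expand an already-known commutator twice so as to isolate the unknown one'' move is what your plan lacks: if you split $\Lambda_B$ directly inside $[\Lambda_A,\Lambda_B]_q$ via Lemma~\ref{Corollary 1.3.4}, the resulting pieces of $B$ overlap $A$ without being nested in or disjoint from it, so none of the commutation hypotheses of \eqref{q-anticomm 1}--\eqref{q-anticomm 4} is available and the nested $q$-commutator cannot be flattened; nor is there any reason the leftover should take the $(\Theta-\Xi)$ form you anticipate.

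There is also a concrete gap in your base case $\ell=0$, $\delta=1$. ``Applying a single $\tau_L$ in the appropriate tensor slot'' is not a legitimate operation: $\tau_L$ and $\tau_R$ are only defined on the coideal subalgebras $\I_L$ and $\I_R$, and the paper applies them only at stages of the construction where the relevant tensor leg is guaranteed to lie in the coideal subalgebra (in practice, at the outer ends of the chain of morphisms). Passing from \eqref{sets 5 without gap} with $\ell=0$ to \eqref{sets 5 with gap} with $\ell=0$ would require inserting a hole \emph{between} positions $2k$ and $2k+1$ of finished elements supported up to $2k+2$, which the machinery does not license. In the paper the two parities are handled uniformly by inserting $\delta$-dependent factors such as $\big(\tau_L\otimes 1^{\otimes(2\ell+2)}\big)^{\delta}$ and $\big(1^{\otimes 2k}\otimes\tau_R\big)^{\delta}$ at the correct places inside the transporting morphisms; that bookkeeping is precisely what your sketch defers, and it is where the proof lives.
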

\begin{proof}[Sketch of the proof] Use (\ref{standard relation}) for (\ref{sets 3}) with $\ell+1$ instead of $k$, acted upon with \[\chi = \big(\tau_L\otimes 1^{\otimes (2k+2\ell+\delta)}\big)\cdots\big(\tau_L\otimes 1^{\otimes (2\ell+2)}\big),\] to rewrite $\Lambda_{B'}$ in $[\Lambda_{A'},\Lambda_{B'}]_q$ with
\begin{gather*}
A' = \{2,4,\dots,2k,2k+1+\delta,2k+3+\delta,\dots,2k+2\ell+1+\delta\}, \\ B' = \{2k+1+\delta,2k+3+\delta,\dots,2k+2\ell+1+\delta,2k+2\ell+2+\delta\},
\end{gather*}
where $\delta\in\{0,1\}$. Use (\ref{q-anticomm 2}) and expand the inner $q$-commutator by (\ref{sets 2}) acted upon with
\begin{gather*}
\chi = \big(1^{\otimes (2k+2\ell-1+\delta)}\otimes\tau_R\big)\big(1^{\otimes (2k+2\ell-2+\delta)}\otimes\Delta\big)\cdots\\
\hphantom{\chi =}{}\times \big(1^{\otimes(2k+1+\delta)}\otimes\tau_R\big) \big(1^{\otimes (2k+\delta)}\otimes\Delta\big)\big(1^{\otimes 2k}\otimes\tau_R\big)^{\delta}.
\end{gather*}
Apply the relation (\ref{standard relation}) for the sets
\begin{gather*}
\begin{split}
& A = \{1,2k+2\ell+2+\delta\}, \\
&  B = \{1,2,4,\dots,2k\}; \end{split}\vspace{2mm} \\
\begin{split}
& A = \{1,2k+2\ell+2+\delta\},\\
& B = \{1,2,4,\dots,2k,2k+1+\delta,2k+3+\delta,\dots,2k+2\ell+1+\delta \},\end{split}
\end{gather*}
as follows from (\ref{sets 3}) by $\chi = \big(1^{\otimes (2k+2\ell+\delta)}\otimes\tau_R\big)\cdots\big(1^{\otimes 2k}\otimes\tau_R\big)$ respectively from~(\ref{sets 3}) with $k+\ell+\delta$ instead of~$k$, acted upon with $\chi = \big(1^{\otimes (2k-1)}\otimes\Delta\otimes 1^{\otimes (2\ell+1)}\big)^{1-\delta}$. In our expression for $[\Lambda_{A'},\Lambda_{B'}]_q$, there is now one term left containing a $q$-commutator, namely
\begin{gather}
\frac{1}{q+q^{-1}}\Lambda_{\{1\}}[\Lambda_{\{2,4,\dots,2k,2k+1+\delta,2k+3+\delta,\dots,2k+2\ell+1+\delta\}},\nonumber\\
\hphantom{\frac{1}{q+q^{-1}}\Lambda_{\{1\}}[}{} \Lambda_{\{1,2k+1+\delta,2k+3+\delta,\dots,2k+2\ell+1+\delta,2k+2\ell+2+\delta\}}]_q.\label{left to work out}
\end{gather}
On the other hand, we already know how to expand $[\Lambda_{A'},\Lambda_{B'}]_q$ from (\ref{sets 1}) with $\ell+1$ instead of~$k$, acted upon with
\begin{gather*}
\chi = \big(\tau_L\otimes 1^{\otimes (2k+2\ell-1+\delta)}\big)\big(\Delta\otimes 1^{\otimes (2k+2\ell-2+\delta)}\big)\cdots\\
\hphantom{\chi =}{}\times \big(\tau_L\otimes 1^{\otimes (2\ell+3+\delta)}\big)\big(\Delta\otimes 1^{\otimes (2\ell+2+\delta)}\big)\big(\tau_L\otimes 1^{\otimes (2\ell+2)}\big)^{\delta},
\end{gather*}
and adding a factor $1\otimes$ in front. This leads to the sought expression for the remaining $q$-commutator in~(\ref{left to work out}).
\end{proof}

To show the final cases (\ref{sets 6 without gap}) and (\ref{sets 6 with gap}), we will need another commutation relation.

\begin{Lemma}\label{Lemma 1.4.7} For any $k\in\N$ one has
\[
[\Lambda_{\{1,2k+1\}},\Lambda_{\{1,2,4,\dots,2k,2k+1\}}] = 0.
\]
\end{Lemma}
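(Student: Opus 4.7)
The proof can be carried out without induction by combining the basic commutation lemmas with two applications of the fundamental relations. For brevity, set
\[
W=\Lambda_{\{1,2k+1\}}, \quad Z=\Lambda_{\{1,2,4,\ldots,2k,2k+1\}}, \quad U=\Lambda_{\{1,2,4,\ldots,2k\}}, \quad V=\Lambda_{\{2,4,\ldots,2k,2k+1\}}, \quad M=\Lambda_{\{2,4,\ldots,2k\}}.
\]

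The key observation is that $Z$ admits two useful decompositions: $Z = U \cup \{2k+1\}$ with $U \prec \{2k+1\}$, and $Z = \{1\} \cup V$ with $\{1\} \prec V$. Lemma~\ref{Lemma 1.3.7} therefore immediately gives $[U,Z] = [V,Z] = 0$, whence by a direct Leibniz-type expansion $[Z,[U,V]_q] = 0$. On the other hand, expanding $[U,V]_q$ via relation~\eqref{sets 1} and using centrality of $\Lambda_{\{1\}}$ and $\Lambda_{\{2k+1\}}$ yields
\[
[Z,[U,V]_q] = (q^{-2}-q^2)[Z,W] + (q-q^{-1})[Z,M]\,Z.
\]
Equating these two expressions for $[Z,[U,V]_q]$ reduces the lemma to the auxiliary identity $[Z,M] = 0$.

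To establish this auxiliary identity, I plan to apply exactly the same trick, this time with $M$ replacing $Z$ and relation~\eqref{sets 2} replacing~\eqref{sets 1}. By Lemma~\ref{Lemma 1.3.7} applied to the decompositions $U = \{1\} \cup M$ and $V = M \cup \{2k+1\}$ we have $[M,U] = [M,V] = 0$, and by Lemma~\ref{Lemma 1.3.8} we have $[M,W] = 0$. Hence a direct expansion gives $[M,[V,W]_q] = 0$, while expanding $[V,W]_q$ through~\eqref{sets 2} produces $[M,[V,W]_q] = (q-q^{-1})\Lambda_{\{2k+1\}}[M,Z]$, so that $\Lambda_{\{2k+1\}}[M,Z] = 0$.

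Since $\Lambda_{\{2k+1\}} = 1^{\otimes 2k} \otimes \Lambda$ and the Casimir $\Lambda$ is not a zero divisor in $\Uqsl$ (which is a Noetherian domain by the PBW theorem, $q$ not being a root of unity), a tensor-coordinate argument in the spirit of the end of the proof of the~\eqref{sets 4 with gap} case forces $[M,Z] = 0$, and consequently $[W,Z] = 0$. The only genuinely delicate point is this final non-zero-divisor step; everything else is bookkeeping with Lemmas~\ref{Lemma 1.3.7} and~\ref{Lemma 1.3.8} together with the two fundamental relations~\eqref{sets 1} and~\eqref{sets 2}.
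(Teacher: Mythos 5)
Your argument is correct, and it is genuinely different from the paper's. The paper proves this lemma by induction on $k$: it rewrites $\Lambda_{\{1,2,4,\dots,2k,2k+1\}}$ through the relation \eqref{sets 5 without gap} with $\ell=0$ and then verifies, term by term, that every generator appearing in that expansion commutes with $\Lambda_{\{1,2k+1\}}$, using Lemma~\ref{Lemma 1.3.8}, the induction hypothesis, and rank-one relations transported by coactions. You avoid induction altogether: since \eqref{sets 1}, \eqref{sets 2} and Lemmas~\ref{Lemma 1.3.7} and~\ref{Lemma 1.3.8} are all established before this point, your double-commutator cancellations are legitimate and non-circular, the set-theoretic bookkeeping in both expansions ($A\cap B$, $A\cup B$, etc.) checks out, and the reduction to $[\Lambda_{\{2,4,\dots,2k\}},\Lambda_{\{1,2,4,\dots,2k,2k+1\}}]=0$ is clean. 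A pleasant bonus is that this auxiliary identity is precisely Lemma~\ref{Corollary 1.4.10}, which the paper only obtains later with additional effort, so your route delivers two results at once. The one place where you go beyond what the paper itself establishes is the final cancellation of $\Lambda_{\{2k+1\}}$: you need $\Lambda$ to be a regular element of $\Uqsl$, i.e., that $\Uqsl$ is a domain. That is a true and standard consequence of the PBW basis (it holds for any $q\neq 0$, in fact), and the tensor-coordinate reduction to a single factor works exactly as you describe; but note that the paper's analogous step at the end of the \eqref{sets 4 without gap} proof only uses the weaker fact that $\Lambda$ is not invertible. This matters for the paper's claim that every proof transfers verbatim to the $q$-Bannai--Ito setting: $\ospq$ is \emph{not} a domain (the relation $P^2=1$ produces the zero divisors $1\pm P$), so your argument requires the separate, though easy, observation that $\Gamma^q$ is nonetheless a regular element there (it has the form $xP$ with $x$ a nonzero element of the $P$-free PBW subalgebra, which is a domain, and $P$ is invertible). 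With that caveat recorded, the proof stands.
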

\begin{proof}By induction on $k$, the case $k = 1$ being obvious from a direct calculation. Suppose the claim has been proven for $k-1$. Since~(\ref{standard relation}) holds for~(\ref{sets 5 without gap}), with $k-1$ instead of $k$ and with $\ell = 0$, we have by $\chi = 1^{\otimes (2k-1)}\otimes\Delta$
\begin{gather*}
\Lambda_{\{1,2,4,\dots,2k,2k+1\}} = \frac{[\Lambda_{\{2,4,\dots,2k-2,2k-1\}},\Lambda_{\{1,2k-1,2k,2k+1\}}]_q}{q^{-2}-q^2} \\
\hphantom{\Lambda_{\{1,2,4,\dots,2k,2k+1\}} =}{} +\frac{\Lambda_{\{2k-1\}}\Lambda_{\{1,2,4,\dots,2k-2,2k-1,2k,2k+1\}} + \Lambda_{\{2,4,\dots,2k-2\}}\Lambda_{\{1,2k,2k+1\}}}{q+q^{-1}}.
\end{gather*}
It suffices now to show that each of the generators in the right-hand side commutes with $\Lambda_{\{1,2k+1\}}$. For $\Lambda_{\{2,4,\dots,2k-2,2k-1\}}$ and $\Lambda_{\{2,4,\dots,2k-2\}}$ this follows from Lemma~\ref{Lemma 1.3.8} with $k-1$ instead of $k$, upon applying $\chi = \big(1^{\otimes(2k-1)}\otimes \tau_R\big)\big(1^{\otimes(2k-3)}\otimes\Delta\otimes 1\big)$ and $\chi = \big(1^{\otimes(2k-1)}\otimes \tau_R\big)\big(1^{\otimes(2k-2)}\otimes \tau_R\big)$ respectively. For $\Lambda_{\{1,2,4,\dots,2k-2,2k-1,2k,2k+1\}}$ we apply $\chi = \big(1^{\otimes (2k-3)}\otimes\Delta\otimes 1\otimes 1\big)\big(1^{\otimes (2k-3)}\otimes\Delta\otimes 1\big)$ to the induction hypothesis. The remaining nontrivial commutation relations follow from $[\Lambda_{\{1,3\}},\Lambda_{\{1,2,3\}}] = 0$ upon applying respectively
\[
\chi = \big(\tau_L\otimes 1^{\otimes (2k-1)}\big)\cdots\big(\tau_L\otimes 1^{\otimes 4}\big)\big(\tau_L\otimes 1^{\otimes 3}\big)(1\otimes\Delta\otimes 1)
\]
and
\begin{gather*}
\chi = \big(\tau_L\otimes 1^{\otimes (2k-1)}\big)\cdots\big(\tau_L\otimes 1^{\otimes 3}\big)\big(\tau_L\otimes 1^{\otimes 2}\big).\tag*{\qed}
\end{gather*}\renewcommand{\qed}{}
\end{proof}

\begin{Corollary}\label{Corollary 1.4.8}$\Lambda_{\{1,2k\}}$ commutes with $\Lambda_{\{1,2,4,\dots,2k\}}$ and $\Lambda_{\{1,3,5,\dots,2k-1,2k\}}$.
\end{Corollary}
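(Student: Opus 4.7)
The plan is to deduce both commutations directly from Lemma~\ref{Lemma 1.4.7} by pushing the identity $[\Lambda_{\{1,2k+1\}},\Lambda_{\{1,2,4,\dots,2k,2k+1\}}]=0$ forward along two algebra morphisms built from the counit $\epsilon$. For any $a,b\in\N$ the map $1^{\otimes a}\otimes\epsilon\otimes 1^{\otimes b}\colon \Uqsl^{\otimes(a+b+1)}\to \Uqsl^{\otimes(a+b)}$ is an algebra homomorphism, so it sends a vanishing commutator to a vanishing commutator; the task is to pick the two morphisms and verify that the images of $\Lambda_{\{1,2k+1\}}$ and $\Lambda_{\{1,2,4,\dots,2k,2k+1\}}$ are exactly the generators appearing in the corollary.

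For the first statement I apply $\chi=1^{\otimes(2k-1)}\otimes\epsilon\otimes 1$. Using Definition~\ref{right extension process def}, the outermost morphism in $\Lambda_{\{1,2k+1\}}$ is $1^{\otimes(2k-1)}\otimes\tau_R$ and the outermost morphism in $\Lambda_{\{1,2,4,\dots,2k,2k+1\}}$ is $1^{\otimes(2k-1)}\otimes\Delta$. Applying $\chi$ peels off this last morphism in each case, via the counit identities $(\epsilon\otimes 1)\tau_R=1$ and $(\epsilon\otimes 1)\Delta=1$, and what remains is exactly $\Lambda_{\{1,2k\}}$ respectively $\Lambda_{\{1,2,4,\dots,2k\}}$.

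For the second statement I first use Proposition~\ref{thm - left and right extension process} to rewrite both generators via the left extension process, and then apply $\chi'=1\otimes\epsilon\otimes 1^{\otimes(2k-1)}$. Now the outermost morphisms are $\tau_L\otimes 1^{\otimes(2k-1)}$ and $\Delta\otimes 1^{\otimes(2k-1)}$, which $\chi'$ strips off via $(1\otimes\epsilon)\tau_L=1$ and $(1\otimes\epsilon)\Delta=1$. The first image is immediately $\widehat{\Lambda}_{\{1,2k\}}=\Lambda_{\{1,2k\}}$. For the second image one is left with the composition $\widehat{\mu}_2^A\widehat{\mu}_3^A\cdots\widehat{\mu}_{k+1}^A(\Lambda)$ for $A=\{1,2,4,\dots,2k,2k+1\}$, and a direct substitution in~(\ref{mu hat def}) shows that $\widehat{\mu}_i^A=\widehat{\mu}_{i-1}^B$ for all $i\in\{2,\dots,k+1\}$, where $B=\{1,3,5,\dots,2k-1,2k\}$; both carry the same gap pattern and both have their distinguished maximum (respectively $a_m=2k+1$ for $A$ after the $\Delta$ has been absorbed and $b_m=2k$ for $B$) producing the same tensor-leg counts. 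Hence this composition equals $\widehat{\Lambda}_B=\Lambda_B$.

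The main, and really only, technical point is this last index comparison; everything else is a single application of a counit identity. Multiplicativity of $\chi$ and $\chi'$ then transports the vanishing commutator of Lemma~\ref{Lemma 1.4.7} to the two vanishing commutators asserted by the corollary.
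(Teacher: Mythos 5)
Your proof is correct, but it takes a genuinely different route from the paper's one-line argument. The paper starts from Lemma~\ref{Lemma 1.4.7} with $k-1$ in place of $k$, i.e., from $[\Lambda_{\{1,2k-1\}},\Lambda_{\{1,2,4,\dots,2k-2,2k-1\}}]=0$ in $\Uqsl^{\otimes(2k-1)}$, and \emph{grows} it by one tensor leg: applying $\chi=1^{\otimes(2k-2)}\otimes\tau_R$ sends the indexing sets to $\{1,2k\}$ and $\{1,2,4,\dots,2k-2,2k\}=\{1,2,4,\dots,2k\}$, while $\chi=\tau_L\otimes 1^{\otimes(2k-2)}$ sends them to $\{1,2k\}$ and $\{1,3,5,\dots,2k-1,2k\}$. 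You instead start from the lemma at the same value of $k$ and \emph{shrink} it by one leg with the counit, exploiting that $1^{\otimes a}\otimes\epsilon\otimes 1^{\otimes b}$ is an algebra morphism together with the identities $(\epsilon\otimes 1)\tau_R=1$, $(1\otimes\epsilon)\tau_L=1$ and $(\epsilon\otimes 1)\Delta=(1\otimes\epsilon)\Delta=1$. Both arguments are valid; I checked your index comparison $\widehat{\mu}_i^A=\widehat{\mu}_{i-1}^B$ for $i\in\{2,\dots,k+1\}$, which indeed reduces to the equality $a_{k+2}-a_i=b_{k+1}-b_{i-1}$, a consequence of $a_i=b_{i-1}+1$ for $i\geq 2$. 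The counit device appears nowhere in the paper (which only ever pushes relations forward along $\Delta$, $\tau_R$, $\tau_L$), but it is a legitimate and somewhat flexible alternative: it lets you strip off whichever endpoint leg was created last without having to identify the correct lower-rank instance of the lemma. The trade-off is that your route requires the left-extension bookkeeping for the second commutator, which the paper's coaction argument avoids entirely.
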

\begin{proof}Act on the relation of Lemma \ref{Lemma 1.4.7}, with $k-1$ instead of $k$, with $\chi = 1^{\otimes (2k-2)}\otimes\tau_R$ and $\chi = \tau_L\otimes 1^{\otimes (2k-2)}$ respectively.
\end{proof}

\begin{Lemma}\label{lemma sets 6}The standard relation \eqref{standard relation} holds for the sets \eqref{sets 6 without gap} and \eqref{sets 6 with gap}, i.e., for
\begin{gather}
A = \{1,2k+1+\delta,2k+3+\delta,\dots,2k+2\ell+1+\delta,2k+2\ell+2+\delta\}, \nonumber\\
 B = \{1,2,4,\dots,2k,2k+2\ell+2+\delta\}\label{to compute 5}
\end{gather}
with $k,\ell\in\N$ and $\delta \in\{0,1\}$.
\end{Lemma}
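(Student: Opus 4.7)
The plan is to derive (\ref{sets 6 without gap})/(\ref{sets 6 with gap}) from the preceding fundamental cases by exploiting the triangular relationship between the three generators
\[
X = \Lambda_{B}, \quad Y = \Lambda_{\{2,4,\dots,2k,2k+1+\delta,\dots,2k+2\ell+1+\delta\}}, \quad Z = \Lambda_{A},
\]
whose pairwise $q$-commutators $[X,Y]_q$, $[Y,Z]_q$ and $[Z,X]_q$ are the contents of (\ref{sets 4 without gap})/(\ref{sets 4 with gap}), (\ref{sets 5 without gap})/(\ref{sets 5 with gap}) and the present Lemma respectively. Following the template set by the proofs of the preceding Lemmas, the idea is to rewrite one of the generators $X$ or $Z$ as a $q$-commutator using an earlier relation of a different shape from (\ref{sets 4 without gap})/(\ref{sets 4 with gap}) and (\ref{sets 5 without gap})/(\ref{sets 5 with gap}), in order to avoid circularity.

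Concretely, I would first invoke (\ref{sets 2}) with $\ell+1$ in place of $k$, acted upon with a suitable morphism $\chi$ built from $\Delta$, $\tau_L$ and $\tau_R$ on disjoint tensor positions, so as to rewrite $X$ as $(q^{-2}-q^2)^{-1}[\Lambda_U,\Lambda_V]_q$ plus products of simpler generators; here $U$ and $V$ will involve an auxiliary index $2k+2\ell+3+\delta$ lying outside the support of $A\cup B$, effectively lifting the identity to $\Uqsl^{\otimes(2k+2\ell+3+\delta)}$. Substituting this expression into $[Z,X]_q$ produces a nested $q$-commutator $[Z,[\Lambda_U,\Lambda_V]_q]_q$, which by one of the identities (\ref{q-anticomm 1})--(\ref{q-anticomm 4}) (chosen according to which of $[Z,\Lambda_U]$ or $[Z,\Lambda_V]$ vanishes) transforms into a nested commutator controllable either by the rank-one relations (\ref{rank one 12-23})--(\ref{rank one 13-12}) acted upon with a suitable $\chi$, by Lemma~\ref{Corollary 1.3.4}, or by the already-proven case (\ref{sets 4 without gap})/(\ref{sets 4 with gap}).

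The remaining contributions after substitution are $q$-commutators of $Z$ with products $\Lambda_P\Lambda_Q$ of already-constructed generators. For each such product at least one factor commutes with $Z$, either by Lemmas~\ref{Lemma 1.3.7} and \ref{Lemma 1.3.8} or by Lemma~\ref{Lemma 1.4.7} and Corollary~\ref{Corollary 1.4.8}, which were prepared precisely for this configuration of indices. The surviving $q$-commutator of $Z$ with a single $\Lambda$ is then handled as above, and after collecting terms the identity (\ref{standard relation}) for the sets (\ref{sets 6 without gap})/(\ref{sets 6 with gap}) should emerge. The main obstacle will be the bookkeeping of the large number of terms that arise and matching them against the lengthy right-hand side of (\ref{standard relation}). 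Should an irreducible discrepancy remain, I would mirror the tensor-position argument used at the end of the proof for (\ref{sets 4 without gap})/(\ref{sets 4 with gap}): writing the difference in the ambient space $\Uqsl^{\otimes(2k+2\ell+3+\delta)}$ as $(\Theta - \Xi)\otimes 1 = (q+q^{-1})^{-1}\Lambda_{\{2k+2\ell+3+\delta\}}\big(1^{\otimes(2k+2\ell+1+\delta)}\otimes\Delta\big)(\Theta - \Xi)$ and exploiting the absence of a $\Lambda$-inverse in $\Uqsl$ to force $\Theta = \Xi$.
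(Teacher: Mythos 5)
Your triangle observation---that $[X,Y]_q$, $[Y,Z]_q$ and $[Z,X]_q$ are precisely (\ref{sets 4 without gap})/(\ref{sets 4 with gap}), (\ref{sets 5 without gap})/(\ref{sets 5 with gap}) and the present case---is correct, but the concrete derivation you propose breaks down at its pivotal step. Rewriting $X=\Lambda_B$ via (\ref{sets 2}) with the auxiliary index $2k+2\ell+3+\delta$ produces $U=\{2,4,\dots,2k,2k+2\ell+2+\delta,2k+2\ell+3+\delta\}$ and $V=\{1,2k+2\ell+3+\delta\}$, and to reorder the nested bracket $[\Lambda_A,[\Lambda_U,\Lambda_V]_q]_q$ by one of (\ref{q-anticomm 1})--(\ref{q-anticomm 4}) you need $[\Lambda_A,\Lambda_U]=0$ or $[\Lambda_A,\Lambda_V]=0$. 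Neither holds: $A$ and $V$ both contain $1$, and already for $k=1$, $\ell=0$, $\delta=0$ one has $[\Lambda_{\{1,5\}},\Lambda_{\{1,3,4\}}]_q=\big(q^{-2}-q^2\big)\Lambda_{\{3,4,5\}}+\cdots$ by the pattern (\ref{most general sets 3}), so $[\Lambda_A,\Lambda_V]\neq 0$; and $A$ and $U$ interlock around their single common element $2k+2\ell+2+\delta$ (compare $\{1,3,4\}$ with $\{2,4,5\}$), so $[\Lambda_A,\Lambda_U]\neq 0$ either. Rewriting $Z=\Lambda_A$ instead meets the same obstruction with the roles of the two sets exchanged. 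The analogous step in the proof of (\ref{sets 4 without gap}) works only because there the second operand $\{2,4,\dots\}$ does \emph{not} contain $1$, so that the commutation (\ref{comm rel yy}) is available from Lemma~\ref{Lemma 1.3.8}; in the present case both $A$ and $B$ contain $1$ and $2k+2\ell+2+\delta$, which is exactly what makes this case harder.

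A second gap is that you never set up an induction, while some of the ``remaining contributions'' cannot be expanded without one: for instance the term $\Lambda_{\{2k+2\ell+3+\delta\}}\,[\Lambda_A,\Lambda_{B\cup\{2k+2\ell+3+\delta\}}]_q$ appears, and the pair $(A,B\cup\{2k+2\ell+3+\delta\})$ is not among the cases established so far; the $\Theta=\Xi$ device from the proof of (\ref{sets 4 without gap}) was designed to absorb one such leftover term, not to compensate for the failed reordering as well. The paper's actual proof is a double induction: for the basis $k=1$ one inducts on $\ell$, rewriting $\Lambda_A$ via (\ref{sets 4 without gap}) with $k=1$ and $\ell=0$ lifted by a morphism $\chi$ (so that the set playing the role of $V$ is the image of $\{2,3\}$, positioned so that (\ref{q-anticomm 3}) applies against $\Lambda_{\{1,2,2\ell+4+\delta\}}$); for the inductive step in $k$ one rewrites $\Lambda_B$ via (\ref{sets 5 without gap}) with $k=1$ and $\ell=0$ and uses (\ref{q-anticomm 2}), with Lemma~\ref{Lemma 1.4.7} supplying the needed vanishing commutators and the induction hypotheses expanding the residual $q$-commutators. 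You would need to restructure your argument along these lines rather than imitating the proof of (\ref{sets 4 without gap}) directly.
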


\begin{proof}[Sketch of the proof] By induction on $k$. Already the induction basis $k = 1$, i.e., to prove (\ref{standard relation}) for
\begin{gather}\label{induction basis on k}
A = \{1,3+\delta,5+\delta,\dots,2\ell+3+\delta,2\ell+4+\delta\}, \qquad B = \{1,2,2\ell+4+\delta\},
\end{gather}
is difficult and needs to be shown by induction on $\ell$, the case $\ell= 0$ being trivial. Suppose thus that the claim holds for $\ell-1$, i.e., the relation~(\ref{standard relation}) holds for
\begin{gather}\label{IIH 1}
A = \{1,3+\delta,5+\delta,\dots,2\ell+1+\delta,2\ell+2+\delta\}, \qquad B = \{1,2,2\ell+2+\delta\}.
\end{gather}
We will find an expression for
\begin{gather}\label{to compute 4}
[\Lambda_{\{1,3+\delta,5+\delta,\dots,2\ell+3+\delta,2\ell+4+\delta\}},\Lambda_{\{1,2,2\ell+4+\delta\}}]_q.
\end{gather}
Rewrite $\Lambda_{\{1,3+\delta,5+\delta,\dots,2\ell+3+\delta,2\ell+4+\delta\}}$ in (\ref{to compute 4}) by (\ref{sets 4 without gap}) with $k = 1$ and $\ell = 0$, acted upon by \begin{gather*}\begin{split}&
\chi = \big(\tau_L\otimes 1^{\otimes (2\ell+3)}\big)^{\delta}\big(\tau_L\otimes 1^{\otimes (2\ell+2)}\big)\big(\Delta\otimes 1^{\otimes (2\ell+1)}\big)\cdots\\
& \hphantom{\chi =}{}\times \big(\tau_L\otimes 1^{\otimes 6}\big)\big(\Delta\otimes 1^{\otimes 5}\big)\big(\tau_L\otimes 1^{\otimes 4}\big)\big(\Delta\otimes 1^{\otimes 3}\big).
\end{split}
\end{gather*}
Use (\ref{q-anticomm 3}) and expand further by (\ref{standard relation}) for
\begin{gather*}\begin{split}&
A = \{1,3+\delta,5+\delta,\dots,2\ell+1+\delta,2\ell+2+\delta,2\ell+4+\delta\}, \\
& B = \{1,2,2\ell+4+\delta\}; \end{split}\vspace{2mm}\\
\begin{split}&
A = \{1,3+\delta,5+\delta,\dots,2\ell+1+\delta,2\ell+2+\delta,2\ell+3+\delta,2\ell+4+\delta\}, \\
& B = \{1,2,2\ell+4+\delta\}; \end{split}\vspace{2mm}\\
\begin{split}&
A = \{1,3+\delta,5+\delta,\dots,2\ell+1+\delta,2\ell+4+\delta\}, \\
& B= \{1,2,2\ell+4+\delta\},\end{split}
\end{gather*}
as follows from the induction hypothesis (\ref{IIH 1}) by respectively
\begin{gather*}
\chi = \big(1^{\otimes (2\ell+\delta)}\otimes\Delta\otimes 1^{\otimes 2}\big)\big(1^{\otimes (2\ell+1+\delta)}\otimes\tau_R\big), \\
\chi = \big(1^{\otimes (2\ell+\delta)}\otimes\Delta\otimes 1^{\otimes 2}\big)\big(1^{\otimes (2\ell+\delta)}\otimes\Delta\otimes 1\big), \\
\chi = \big(1^{\otimes (2\ell+2+\delta)}\otimes\tau_R\big)\big(1^{\otimes (2\ell+1+\delta)}\otimes\tau_R\big).
\end{gather*}
Use the relation $[\Lambda_{\{1,2\ell+4+\delta\}},\Lambda_{\{1,2,3+\delta,5+\delta,\dots,2\ell+1+\delta,2\ell+2+\delta,2\ell+4+\delta\}}] = 0$, as follows from Lem\-ma~\ref{Lemma 1.4.7} with $\ell+\delta$ instead of $k$, after applying
\[
\chi =\big(1\otimes\Delta\otimes 1^{\otimes (2\ell+1)}\big)^{1-\delta}\big(1^{\otimes (2\ell-1+2\delta)}\otimes\Delta\otimes 1^{\otimes 2}\big)\big(1^{\otimes (2\ell+2\delta)}\otimes\tau_R\big).
\]
The remaining $q$-commutators can now be expanded using Lemma \ref{Corollary 1.3.4} and (\ref{sets 4 without gap}) with $k = 1$ and $\ell = 0$, acted upon by
\begin{gather*}
\chi = \big(\Delta\otimes 1^{\otimes (2\ell+2+\delta)}\big)\big(\tau_L\otimes 1^{\otimes (2\ell+2)}\big)^{\delta}\big(\Delta\otimes 1^{\otimes (2\ell+1)}\big)\big(\tau_L\otimes 1^{\otimes 2\ell}\big)\big(\Delta\otimes 1^{\otimes (2\ell-1)}\big)\cdots\\
\hphantom{\chi =}{}\times \big(\tau_L\otimes 1^{\otimes 4}\big)\big(\Delta\otimes 1^{\otimes 3}\big).
\end{gather*}
This leads to (\ref{standard relation}) for (\ref{induction basis on k}), which will serve as the basis for the induction on $k$ we perform in our global proof, i.e., to show (\ref{standard relation}) for (\ref{to compute 5}).

Suppose now the claim holds for $k-1$, i.e., the relation (\ref{standard relation}) holds for
\begin{gather}
A = \{1,2k-1+\delta,2k+1+\delta,\dots,2k+2\ell-1+\delta,2k+2\ell+\delta\}, \nonumber\\
 B = \{1,2,4, \dots,2k-2,2k+2\ell+\delta\},\label{iih 1}
\end{gather}
for arbitrary $\ell\in\N$. We will find an expression for
\begin{gather}
\label{to compute 6}
[\Lambda_{\{1,2k+1+\delta,2k+3+\delta,\dots,2k+2\ell+1+\delta,2k+2\ell+2+\delta\}},\Lambda_{\{1,2,4,\dots,2k,2k+2\ell+2+\delta\}}]_q.
\end{gather}
Use (\ref{sets 5 without gap}) with $k = 1$ and $\ell = 0$, acted upon with
\[
\chi = \left(\overrightarrow{\prod_{m = 2k+1}^{2k+2\ell+\delta}}\big(1^{\otimes m}\otimes\tau_R\big)\right)\left(\overrightarrow{\prod_{m = 2}^k}\big(1^{\otimes 2m}\otimes\tau_R\big)\big(1^{\otimes (2m-1)}\otimes\Delta\big)\right),
\]
to rewrite $\Lambda_{\{1,2,4,\dots,2k,2k+2\ell+2+\delta\}}$ in (\ref{to compute 6}) and apply~(\ref{q-anticomm 2}). Expand further by~(\ref{standard relation}) for $A = \{1,2k+1+\delta, 2k+3+\delta,\dots,2k+2\ell+1+\delta,2k+2\ell+2+\delta\}$ and
\begin{gather*}
B = \{1,3,4,6,\dots,2k-2,2k, 2k+2\ell+2+\delta\}, \\
B = \{1,2,3,4,6,\dots,2k-2,2k,2k+2\ell+2+\delta\}, \\
B = \{1,4,6,\dots,2k-2,2k,2k+2\ell+2+\delta\},
\end{gather*}
as follows from the induction hypothesis (\ref{iih 1}) by applying respectively
\begin{gather*}
\chi = \big(\tau_L\otimes 1^{\otimes (2k+2\ell+\delta)}\big)\big(1\otimes\Delta\otimes 1^{\otimes (2k+2\ell-2+\delta)}\big), \\
\chi = \big(1\otimes\Delta\otimes 1^{\otimes (2k+2\ell-1+\delta)}\big)\big(1\otimes\Delta\otimes 1^{\otimes (2k+2\ell-2+\delta)}\big), \\
\chi = \big(\tau_L\otimes 1^{\otimes (2k+2\ell+\delta)}\big)\big(\tau_L\otimes 1^{\otimes (2k+2\ell-1+\delta)}\big).
\end{gather*}
Apply the commutation relation \[[\Lambda_{\{1,2k+2\ell+2+\delta\}},\Lambda_{\{1,3,4,6,\dots,2k,2k+1+\delta,2k+3+\delta,\dots,2k+2\ell+1+\delta,2k+2\ell+2+\delta\}}] = 0,\] which follows from Lem\-ma~\ref{Lemma 1.4.7}. The remaining $q$-commutators can now be expanded using Lem\-ma~\ref{Corollary 1.3.4} and~(\ref{sets 5 without gap}) with $k = 1$ and $\ell = 0$. This leads to the anticipated expression for~(\ref{to compute 6}).
\end{proof}

\section{More commutation relations}\label{section - more commutation relations}
In this section we derive several criteria on the sets $A$ and $B$ for the elements $\Lambda_A$ and $\Lambda_B$ to commute. Eventually, it will be our aim to show that it suffices that $B\subseteq A$ in order to have $[\Lambda_A,\Lambda_B] = 0$. This result, stated in Theorem \ref{thm - commutation general}, will be proven in Section \ref{section - proof commutation general} and its proof will rely on the results of the present section.

\begin{Lemma}\label{Lemmas 1.3.9 to 1.3.12} The element $\Lambda_{\{1,3,5,\dots,2k-1\}}$ commutes with $\Lambda_{[1;2k]}$ and $\Lambda_{[1;2k-1]}$, and $\Lambda_{\{2,4,6,\dots,2k\}}$ commutes with $\Lambda_{[1;2k]}$ and $\Lambda_{[1;2k+1]}$.
\end{Lemma}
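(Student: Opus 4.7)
I propose to prove the four commutation relations simultaneously by induction on $k$. The base case $k=1$ is immediate from Lemma~\ref{lemma Gamma of singleton}: in each of the four statements, one of the generators then reduces to a singleton $\Lambda_{\{i\}}$, which is central in $U_q(\mathfrak{sl}_2)^{\otimes n}$.

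For the inductive step, assume the four relations hold for $k-1$. The key technique is to express one of the ``alternating-set'' generators as a $q$-commutator plus products of simpler generators by means of Lemma~\ref{Corollary 1.3.4}, and then verify that each simpler generator commutes with the corresponding interval generator. For instance, to handle $[\Lambda_{\{2,4,\dots,2k\}},\Lambda_{[1;2k]}]=0$ I would start from
\[
\Lambda_{\{2,4,\dots,2k\}} = \frac{[\Lambda_{\{2,3\}},\Lambda_{\{3,4,6,\dots,2k\}}]_q}{q^{-2}-q^2} +\frac{\Lambda_{\{3\}}\Lambda_{\{2,3,4,6,\dots,2k\}} + \Lambda_{\{2\}}\Lambda_{\{4,6,\dots,2k\}}}{q+q^{-1}},
\]
obtained from Lemma~\ref{Corollary 1.3.4} exactly as in the proof of Lemma~\ref{Lemma 1.3.8}. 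It then suffices to check that each of $\Lambda_{\{2,3\}}$, $\Lambda_{\{3,4,6,\dots,2k\}}$, $\Lambda_{\{2,3,4,6,\dots,2k\}}$ and $\Lambda_{\{4,6,\dots,2k\}}$ commutes with $\Lambda_{[1;2k]}$. These are treated as sub-claims: the central factors $\Lambda_{\{2\}}$, $\Lambda_{\{3\}}$ are dispatched by Lemma~\ref{lemma Gamma of singleton}, and the remaining commutations are obtained either from the induction hypothesis after applying a morphism $\chi = 1^{\otimes n}\otimes \alpha\otimes 1^{\otimes m}$ with $\alpha\in\{\Delta,\tau_R,\tau_L\}$ that trivially extends a pre-existing commutation, or by invoking the already-established Lemmas~\ref{Lemma 1.3.8}, \ref{Lemma 1.4.7} and Corollary~\ref{Corollary 1.4.8}. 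The three remaining statements of the lemma admit completely analogous decompositions, and the four sub-inductions interleave so that one can always reduce to a strictly smaller situation.

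\textbf{Main obstacle.} The pair $(\{1,3,\dots,2k-1\},[1;2k])$ has identical total support $[1;2k]$, so there is no tensor position on which both generators act trivially. Consequently Lemma~\ref{Lemma 1.3.7} (the $A\prec A'$ criterion) and any ``disjoint-support'' shortcut are unavailable, and no single morphism $\chi$ applied to a single lower-rank instance lands on the desired pair of generators. The proof must therefore genuinely bootstrap through $q$-commutator decompositions, and the delicate part is the combinatorial bookkeeping needed to verify that each ``simpler'' generator appearing in such a decomposition does commute with the full interval generator $\Lambda_{[1;2k]}$ or $\Lambda_{[1;2k\pm1]}$ --- this is where the cross-induction among the four statements, combined with careful choice of $\chi$ and repeated invocation of Lemmas~\ref{Lemma 1.3.8}, \ref{Lemma 1.4.7} and Corollary~\ref{Corollary 1.4.8}, does all the work.
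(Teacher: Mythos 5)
Your proposal is correct and follows essentially the same route as the paper: induction on $k$, rewriting the alternating-set generator via Lemma~\ref{Corollary 1.3.4} as a $q$-commutator plus products of simpler generators, and then checking each term against the interval generator by pushing the induction hypothesis forward with coproduct morphisms (plus the elementary relation $[\Lambda_{\{2\}},\Lambda_{\{1,2,3\}}]=0$ for the short factor). The only difference is cosmetic — you split the alternating set at its first gap where the paper splits at its last — and the heavier machinery you mention (Lemmas~\ref{Lemma 1.3.8}, \ref{Lemma 1.4.7}, Corollary~\ref{Corollary 1.4.8}) turns out not to be needed.
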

\begin{proof}We will only show the first relation, the others follow in complete analogy. We proceed by induction on $k$, the case $k = 1$ being trivial. Suppose hence that the claim holds for $k-1$, i.e., $\Lambda_A$ commutes with $\Lambda_B$ for
\begin{gather}\label{ih case 1}
A = \{1,3,5,\dots,2k-3\}, \qquad B = [1;2k-2].
\end{gather}
By Lemma \ref{Corollary 1.3.4} we may write
\begin{gather*}%\label{basis relation 2}
\Lambda_{\{1,3,5,\dots,2k-1\}} = \frac{[\Lambda_{\{1,3,5,\dots,2k-3,2k-2\}},\Lambda_{\{2k-2,2k-1\}}]_q}{q^{-2}-q^2} \\
\hphantom{\Lambda_{\{1,3,5,\dots,2k-1\}} =}{} +\frac{\Lambda_{\{2k-2\}}\Lambda_{\{1,3,5,\dots,2k-3,2k-2,2k-1\}} + \Lambda_{\{2k-1\}}\Lambda_{\{1,3,5,\dots,2k-3\}}}{q+q^{-1}}.
\end{gather*}
Hence it suffices to show that $\Lambda_{[1;2k]}$ commutes with each of the terms in the right-hand side. For $\Lambda_{\{1,3,5,\dots,2k-3,2k-2\}} $, $\Lambda_{\{1,3,5,\dots,2k-3,2k-2,2k-1\}}$ and $\Lambda_{\{1,3,5,\dots,2k-3\}} $ this follows from the induction hypothesis~(\ref{ih case 1}) by $\chi = \big(1^{\otimes (2k-2)}\otimes\Delta\big)\big(1^{\otimes (2k-4)}\otimes\Delta\otimes 1\big)$, $\chi = \big(1^{\otimes (2k-3)}\otimes\Delta\otimes 1\big)\big(1^{\otimes (2k-4)}\otimes\Delta\otimes 1)$ and $\chi = \big(1^{\otimes (2k-2)}\otimes\Delta\big)\big(1^{\otimes (2k-3)}\otimes\Delta\big)$ respectively. For $\Lambda_{\{2k-2,2k-1\}}$ this follows from $[\Lambda_{\{2\}},\Lambda_{\{1,2,3\}}] = 0$ acted upon with $1\otimes\Delta\otimes 1$ and iterations of $\Delta\otimes 1^{\otimes n}$.
\end{proof}

Another useful commutation relation relies on Lemma \ref{lemma sets 6}. The proof is similar to the one above, we will hence just sketch it.

\begin{Lemma}\label{Corollary 1.4.10}The element $\Lambda_{\{2,4,\dots,2k\}}$ commutes with $\Lambda_{\{1,2,4,\dots,2k,2k+1\}}$.
\end{Lemma}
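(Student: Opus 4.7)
The plan is to proceed by induction on $k$. The base case $k=1$ reduces to the relation $[\Lambda_{\{2\}},\Lambda_{\{1,2,3\}}]=0$, which is immediate from Lemma~\ref{lemma Gamma of singleton} since $\Lambda_{\{2\}}$ acts as the Casimir in the second tensor factor.

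For the induction step, the idea is to mimic the argument of Lemma~\ref{Lemmas 1.3.9 to 1.3.12}: one decomposes $\Lambda_{\{2,4,\dots,2k\}}$ via Lemma~\ref{Corollary 1.3.4} (applied with $A_1=\{2,4,\dots,2k-2\}$, $i=2k-1$, $A_2=\{2k\}$) as
\[
\Lambda_{\{2,4,\dots,2k\}} = \frac{[\Lambda_{\{2,4,\dots,2k-2,2k-1\}},\Lambda_{\{2k-1,2k\}}]_q}{q^{-2}-q^2} + \frac{\Lambda_{\{2k-1\}}\Lambda_{\{2,4,\dots,2k-2,2k-1,2k\}} + \Lambda_{\{2,4,\dots,2k-2\}}\Lambda_{\{2k\}}}{q+q^{-1}},
\]
and reduces the claim to checking that $\Lambda_{\{1,2,4,\dots,2k,2k+1\}}$ commutes with each of the generators $\Lambda_{\{2,4,\dots,2k-2,2k-1\}}$, $\Lambda_{\{2k-1,2k\}}$, $\Lambda_{\{2,4,\dots,2k-2,2k-1,2k\}}$ and $\Lambda_{\{2,4,\dots,2k-2\}}$ appearing on the right-hand side (the scalar factors $\Lambda_{\{2k-1\}}$, $\Lambda_{\{2k\}}$ are central by Lemma~\ref{lemma Gamma of singleton}).

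The commutation involving $\Lambda_{\{2,4,\dots,2k-2\}}$ will follow directly from the induction hypothesis by applying the morphism $\chi = (1^{\otimes(2k-1)}\otimes\Delta)(1^{\otimes(2k-2)}\otimes\tau_R)$, which leaves $\Lambda_{\{2,4,\dots,2k-2\}}$ unchanged (up to embedding in a larger tensor product) while sending $\Lambda_{\{1,2,4,\dots,2k-2,2k-1\}}$ to $\Lambda_{\{1,2,4,\dots,2k,2k+1\}}$. The remaining three commutations will each be derived from Lemma~\ref{lemma sets 6}, in particular from the commutation $[\Lambda_{\{1,2k+1\}},\Lambda_{\{1,2,4,\dots,2k,2k+1\}}]=0$ supplied by Lemma~\ref{Lemma 1.4.7} and its analogues (together with Lemmas~\ref{Lemma 1.3.7} and~\ref{Lemma 1.3.8}), by applying suitable compositions of $\Delta$, $\tau_R$ and $\tau_L$ in various tensor slots.

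The main obstacle is the careful choice of the morphism $\chi$ in the last three commutations, since the sets involved (most notably $\{2k-1,2k\}$ and $\{1,2,4,\dots,2k,2k+1\}$, which share only $\{2k\}$ and satisfy no obvious containment) are not of the standard fundamental shape. For these one has to track, slot by slot, how each $\chi$ transforms the constituent sets so that the image of the known commutation matches the desired one; this is precisely the style of bookkeeping used throughout Sections~\ref{section - proof fundamental relations} and~\ref{section - more commutation relations}, and once performed it completes the induction.
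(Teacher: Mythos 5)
Your base case and the treatment of $\Lambda_{\{2,4,\dots,2k-2\}}$ via the induction hypothesis are fine, but the induction step contains a genuine gap: the three commutations you defer to the end are actually \emph{false} for $k\geq 2$. Your decomposition of $\Lambda_{\{2,4,\dots,2k\}}$ via Lemma~\ref{Corollary 1.3.4} introduces the auxiliary index $2k-1$, which sits in a hole of the set $\{1,2,4,\dots,2k,2k+1\}$. Already for $k=2$ you would need $[\Lambda_{\{1,2,4,5\}},\Lambda_{\{2,3\}}]=0$, $[\Lambda_{\{1,2,4,5\}},\Lambda_{\{3,4\}}]=0$ and $[\Lambda_{\{1,2,4,5\}},\Lambda_{\{2,3,4\}}]=0$. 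But $(A,B)=(\{1,2,4,5\},\{2,3\})$ is an instance of~\eqref{most general sets 1} with $A_1=\{1\}$, $A_2=\{2\}$, $A_3=\{3\}$, $A_4=\{4,5\}$, so these generators satisfy the nontrivial relation~\eqref{standard relation}; a pair obeying~\eqref{standard relation} commutes only if $\Lambda_A\Lambda_B$ were expressible as a linear combination of the other generators, which fails already in ${\rm AW}(3)$ (it would contradict the known basis of the universal Askey--Wilson algebra containing the products $\Lambda_{\{1,2\}}^a\Lambda_{\{2,3\}}^b\Lambda_{\{1,3\}}^c$). The same applies to $\{3,4\}$ and $\{2,3,4\}$. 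So no choice of $\chi$ will produce these commutations from known ones --- they are not commutations at all --- and Lemma~\ref{lemma sets 6}, which yields relations of type~\eqref{standard relation} rather than type~\eqref{commutation}, cannot supply them.

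The fix, which is what the paper does, is to choose a decomposition of $\Lambda_{\{2,4,\dots,2k\}}$ whose constituent index sets are all \emph{subsets} of $\{1,2,4,\dots,2k,2k+1\}$: apply the standard relation for the sets~\eqref{sets 6 with gap} with $k-1$ in place of $k$ and $\ell=0$, which gives
\begin{gather*}
\Lambda_{\{2,4,\dots,2k\}} = \frac{[\Lambda_{\{1,2k,2k+1\}},\Lambda_{\{1,2,4,\dots,2k-2,2k+1\}}]_q}{q^{-2}-q^2}
+ \frac{\Lambda_{\{1,2k+1\}}\Lambda_{\{1,2,4,\dots,2k,2k+1\}} + \Lambda_{\{2k\}}\Lambda_{\{2,4,\dots,2k-2\}}}{q+q^{-1}}.
\end{gather*}
Here every indexing set is contained in $\{1,2,4,\dots,2k,2k+1\}$, and the needed commutations are available: $\Lambda_{\{1,2k+1\}}$ by Lemma~\ref{Lemma 1.4.7}, $\Lambda_{\{1,2k,2k+1\}}$ from the first statement of Corollary~\ref{Corollary 1.4.8} acted upon with $1^{\otimes(2k-1)}\otimes\Delta$, $\Lambda_{\{2,4,\dots,2k-2\}}$ from the induction hypothesis exactly as you propose, and $\Lambda_{\{1,2,4,\dots,2k-2,2k+1\}}$ from $[\Lambda_{\{1,3\}},\Lambda_{\{1,2,3\}}]=0$ acted upon with iterations of $\big(\Delta\otimes 1^{\otimes(n+1)}\big)\big(\tau_L\otimes 1^{\otimes n}\big)$. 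The lesson is that in this "decompose and check each factor" strategy the splitting point must not be taken inside a gap of the other indexing set.
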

\begin{proof}[Sketch of the proof] By induction on $k$. Rewrite $\Lambda_{\{2,4,\dots,2k\}}$ by~(\ref{standard relation}) for~(\ref{sets 6 with gap}), with $k-1$ instead of $k$ and $\ell = 0$. Check that $\Lambda_{\{1,2,4,\dots,2k,2k+1\}}$ commutes with all generators in the new expression. This requires us to use Lemma~\ref{Lemma 1.4.7}, the first statement of Corollary~\ref{Corollary 1.4.8} acted upon with $1^{\otimes (2k-1)}\otimes\Delta$, the induction hypothesis acted upon with $\big(1^{\otimes (2k-1)}\otimes\Delta\big)\big(1^{\otimes (2k-2)}\otimes\tau_R\big)$ and the relation $[\Lambda_{\{1,3\}},\Lambda_{\{1,2,3\}}] = 0$ acted upon with iterations of $ \big(\Delta\otimes 1^{\otimes (n+1)}\big)\big(\tau_L\otimes 1^{\otimes n}\big)$.
\end{proof}

In the following commutation relations, the indexing subsets will no longer be defined by integers $k$ and $\ell$, but rather by more general conditions.

\begin{Lemma}\label{Proposition 1.5.2}Let $A$ be a set of consecutive integers and $B\subseteq A$. Then $\Lambda_A$ and $\Lambda_B$ commute.
\end{Lemma}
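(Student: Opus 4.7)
The plan is to exhibit a common algebra homomorphism $\chi$ that simultaneously sends a known commuting pair of elements in a smaller tensor product to $\Lambda_A$ and $\Lambda_B$, and then apply $\chi$ to transport commutation. Without loss of generality I take $A=[1;n]$. Decompose $B$ into its maximal consecutive runs, $B=[i_1;j_1]\cup\cdots\cup[i_k;j_k]$.

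In the spanning case $i_1=1$ and $j_k=n$, Proposition~\ref{prop derived} applies directly and writes $\Lambda_B=\chi(\Lambda_{\{1,3,\dots,2k-1\}})$, where $\chi\colon\Uqsl^{\otimes(2k-1)}\to\Uqsl^{\otimes n}$ is a composition of exactly $n-2k+1$ coproduct insertions. Since $\chi$ consists only of $\Delta$'s, coassociativity gives $\chi(\Lambda_{[1;2k-1]})=\chi(\Delta^{(2k-2)}(\Lambda))=\Delta^{(n-1)}(\Lambda)=\Lambda_A$. Lemma~\ref{Lemmas 1.3.9 to 1.3.12} supplies $[\Lambda_{\{1,3,\dots,2k-1\}},\Lambda_{[1;2k-1]}]=0$ inside $\Uqsl^{\otimes(2k-1)}$, and applying the algebra morphism $\chi$ to this identity yields $[\Lambda_A,\Lambda_B]=0$.

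For the general case, where $i_1>1$ or $j_k<n$, I would enlarge the source tensor product so as to accommodate the outer chunks $[1;i_1-1]$ and $[j_k+1;n]$. When both outer chunks are non-empty this amounts to working in $\Uqsl^{\otimes(2k+1)}$; the appropriate reference commutation becomes $[\Lambda_{\{2,4,\dots,2k\}},\Lambda_{[1;2k+1]}]=0$, again from Lemma~\ref{Lemmas 1.3.9 to 1.3.12}. A morphism $\chi$ of the same type (coproducts expanding each of the $2k+1$ chunks of $[1;n]$ relative to $B$) then sends this reference pair to $(\Lambda_B,\Lambda_A)$. The variants where exactly one outer chunk is empty use the companion statements $[\Lambda_{\{1,3,\dots,2k-1\}},\Lambda_{[1;2k]}]=0$ or $[\Lambda_{\{2,4,\dots,2k\}},\Lambda_{[1;2k]}]=0$ from the same Lemma.

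The main technical obstacle I anticipate is verifying the identity $\chi(\Lambda_{\{2,4,\dots,2k\}})=\Lambda_B$, i.e.\ the analogue of Proposition~\ref{prop derived} for a reference set that incorporates outer positions. I would establish it by retracing the argument of Proposition~\ref{prop derived}, repeatedly applying~\eqref{Lemma 1.2.1 eq} to commute the coproducts of $\chi$ past the coactions $\tau_R$ used by the right extension process that defines $\Lambda_B$, and invoking the comodule identity~\eqref{R comodule prop 1} together with the cotensor product property~\eqref{cotensor product property} at the tensor positions where coproducts and coactions interact.
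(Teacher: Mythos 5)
Your proposal is correct and follows essentially the same route as the paper: the paper's proof applies a single morphism built purely from coproduct insertions (with Kronecker-delta exponents handling exactly your four cases of empty/non-empty outer chunks) to the reference commutation relations of Lemma~\ref{Lemmas 1.3.9 to 1.3.12}, relying on Proposition~\ref{prop derived} to identify the image of the reference generator with $\Lambda_B$ and on coassociativity for $\Lambda_A$. The "technical obstacle" you flag is in fact already disposed of by Proposition~\ref{prop derived}, since expanding the outer holes only applies $\Delta$ to identity factors of the reference element.
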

\begin{proof}Without loss of generality, we may assume $A = [1;n]$ and write $B$ as
\[
B = [i_1;j_1]\cup\cdots\cup[i_k;j_k].
\]
The claim follows upon applying
\[
\left(\overrightarrow{\prod_{\ell=n-i_1+1}^{n-2}}\big(\Delta\otimes 1^{\otimes \ell}\big)\right)^{1-\delta_{i_1,1}}\left(\overrightarrow{\prod_{\ell=j_k-i_1+2-\delta_{i_1,1}}^{n-i_1-\delta_{i_1,1}}}\big(1^{\otimes \ell}\otimes\Delta\big)\right)^{1-\delta_{j_k,n}}\chi',
\]
with
\[
\chi' = \overleftarrow{\prod_{m=1-\delta_{i_1,1}}^{2k-1-\delta_{i_1,1}}}\overrightarrow{\prod_{\ell=\alpha_{n,\mathbf{i},\mathbf{j}} +1-\delta_{j_k,n}}^{\beta_{n,\mathbf{i},\mathbf{j}}+1-\delta_{j_k,n}}}\big(1^{\otimes m}\otimes\Delta\otimes 1^{\otimes \ell}\big)
\]
to the commutation relation (\ref{commutation}) for
\[
A = \{2-\delta_{i_1,1}, 4-\delta_{i_1,1},\dots,2k-\delta_{i_1,1}\}, \qquad B = [1;2k+1-\delta_{i_1,1}-\delta_{j_k,n}],
\]
which follows from Lemma~\ref{Lemmas 1.3.9 to 1.3.12}.
\end{proof}

It can easily be checked explicitly that one has
\begin{alignat}{4}
&[\Lambda_{\{1,3,4\}},\Lambda_{\{1,3\}}]= 0, \qquad &&[\Lambda_{\{1,3,4\}},\Lambda_{\{1,4\}}]= 0,\qquad &&[\Lambda_{\{1,3,4,5\}},\Lambda_{\{1,4\}}]= 0,&\nonumber\\ &[\Lambda_{\{1,2,4\}},\Lambda_{\{1,4\}}]= 0, \qquad && [\Lambda_{\{1,2,4,5\}},\Lambda_{\{1,4\}}]= 0, \qquad &&[\Lambda_{\{1,2,4,5\}},\Lambda_{\{1,5\}}]= 0,&\nonumber\\ &[\Lambda_{\{1,2,4,5,6\}},\Lambda_{\{1,5\}}]= 0, \qquad && [\Lambda_{\{1,2,4\}},\Lambda_{\{2,4\}}]= 0, \qquad& &[\Lambda_{\{1,2,4,5\}},\Lambda_{\{2,4\}}]= 0,& \nonumber\\
&[\Lambda_{\{1,2,4,5\}},\Lambda_{\{2,5\}}]= 0, \qquad &&[\Lambda_{\{1,2,4,5,6\}},\Lambda_{\{2,5\}}]= 0, \qquad &&[\Lambda_{\{1,2,3,5\}},\Lambda_{\{2,5\}}]= 0,&\nonumber \\ &[\Lambda_{\{1,2,3,5,6\}},\Lambda_{\{2,5\}}]= 0, \qquad &&[\Lambda_{\{1,2,3,5,6\}},\Lambda_{\{2,6\}}]= 0, \qquad &&[\Lambda_{\{1,2,3,5,6,7\}},\Lambda_{\{2,6\}}]= 0.& \label{explicit relations}
\end{alignat}
These relations are needed in order to show the following result.

\begin{Lemma}\label{lemma two intervals two-subset}Let $A = [1;j_1]\cup[i_2;j_2]$ and $B$ be such that $B\subseteq A$ and $\vert B\vert = 2$, then $\Lambda_A$ and $\Lambda_B$ commute.
\end{Lemma}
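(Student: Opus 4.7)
The proof proceeds by case analysis on how $B = \{b_1, b_2\}$ (with $b_1 < b_2$) sits with respect to the intervals $A_1 = [1;j_1]$ and $A_2 = [i_2;j_2]$ of $A$, following the strategy of Section~\ref{Section - Main results}: find a simpler pair $(A', B')$ for which the commutation is known, together with an algebra morphism $\chi$ (assembled from $\Delta$, $\tau_R$, and $\tau_L$) mapping $(A', B')$ onto $(A, B)$.

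Case 1: $B$ lies in a single interval, say $B \subseteq A_1$ (the case $B \subseteq A_2$ is symmetric, using the left extension process). If $b_2 < j_1$, Lemma~\ref{Proposition 1.5.2} gives $[\Lambda_{A_1}, \Lambda_B] = 0$ inside $\Uqsl^{\otimes j_1}$ and the morphism $\chi = 1^{\otimes(j_1-1)} \otimes \psi$ from the right extension process sends $\Lambda_{A_1}$ to $\Lambda_A$ while fixing $\Lambda_B$, whose support lies in $[1;j_1-1]$. If instead $b_2 = j_1$, the same $\chi$ no longer fixes $\Lambda_B$ but sends $\Lambda_{\{b_1, j_1\}}$ to $\Lambda_{\{b_1, j_1\}\cup A_2}$. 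I will treat first the base sub-case $|A_2|=1$, say $A_2=\{c\}$: applying $\chi$ to $[\Lambda_{A_1}, \Lambda_{\{b_1,j_1\}}] = 0$ yields $[\Lambda_A, \Lambda_{\{b_1,j_1,c\}}]=0$, while Lemma~\ref{Corollary 1.3.4} applied to $(\{b_1\}, j_1, \{c\})$ in the variant $(\{i\}\cup A_2', A_1'\cup A_2') = (\{j_1,c\}, \{b_1,c\})$ expresses $(q^{-2}-q^2)\Lambda_B$ as $[\Lambda_{\{j_1,c\}}, \Lambda_{\{b_1,c\}}]_q$ minus a combination of $\Lambda_{\{c\}}\Lambda_{\{b_1,j_1,c\}}$ and $\Lambda_{\{j_1\}}\Lambda_{\{b_1\}}$. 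Taking the commutator with $\Lambda_A$ on both sides, the $q$-commutator vanishes by the Case 2 inputs $[\Lambda_A, \Lambda_{\{j_1,c\}}] = [\Lambda_A, \Lambda_{\{b_1,c\}}] = 0$, the three-element term vanishes by the $\chi$-argument, and the remaining term vanishes by centrality of singletons; hence $[\Lambda_A, \Lambda_B] = 0$. For $|A_2|>1$, pick $c \in A_2$, apply the $|A_2'|=1$ result to $A' = A_1 \cup \{c\}$, and then extend along a further morphism that enlarges $\{c\}$ to the full second interval $A_2$ while leaving the support of $\Lambda_B$ untouched.

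Case 2: $b_1 \in A_1$ and $b_2 \in A_2$. The nine possible relative positions of $b_1$ inside $A_1$ and of $b_2$ inside $A_2$ (left end, strict interior, right end) are realized, up to enlarging each interval on its outer side by $\Delta$-applications, by one of the fifteen base relations listed in~\eqref{explicit relations}; the commutation $[\Lambda_{A'}, \Lambda_{B'}] = 0$ at the base case transfers to $[\Lambda_A, \Lambda_B] = 0$ along the associated morphism $\chi$.

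The principal obstacle is the boundary sub-case $b_2 = j_1$ of Case 1: the naive extension fails because $\chi$ acts on the tensor position of $\max B$, and resolving it requires the interlocking of Lemma~\ref{Corollary 1.3.4}, Lemma~\ref{Proposition 1.5.2}, and the Case 2 inputs supplied by the fifteen explicit relations~\eqref{explicit relations}, which is precisely why those fifteen relations are listed immediately before the statement.
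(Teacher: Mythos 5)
Your proof is correct, and for the straddling case ($b_1\in[1;j_1]$, $b_2\in[i_2;j_2]$) it coincides with the paper's: reduce to the explicit relations \eqref{explicit relations} by applying $\Delta$ at suitable tensor positions. (One small imprecision there: the relevant count is not $3\times 3=9$ but $4\times4=16$ configurations --- for each of $b_1$, $b_2$ one must distinguish the case where the containing interval is a singleton from the cases left end / right end / strict interior --- of which $15$ are nontrivial, matching the fifteen listed relations; since you invoke all fifteen, nothing is lost.) Where you genuinely diverge is the case $B\subseteq[1;j_1]$. The paper's treatment of that case is a one-liner: it starts not from $[\Lambda_{[1;j_1]},\Lambda_B]=0$ but from $[\Lambda_{[1;j_1+1]},\Lambda_B]=0$, i.e., Lemma~\ref{Proposition 1.5.2} applied to the interval enlarged by one; the morphisms $1^{\otimes\ell}\otimes\tau_R$ and $1^{\otimes m}\otimes\Delta$ carrying $\Lambda_{[1;j_1+1]}$ to $\Lambda_{[1;j_1]\cup[i_2;j_2]}$ then all act on tensor positions strictly greater than $j_1\geq\max B$ and hence fix $\Lambda_B$, including when $b_2=j_1$. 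So the ``principal obstacle'' you identify does not arise in the paper's route, and the fifteen relations are listed solely for the straddling case. Your workaround for $b_2=j_1$ --- expressing $\big(q^{-2}-q^2\big)\Lambda_{\{b_1,j_1\}}$ via Lemma~\ref{Corollary 1.3.4} as $[\Lambda_{\{j_1,c\}},\Lambda_{\{b_1,c\}}]_q$ minus central multiples of $\Lambda_{\{b_1,j_1,c\}}$ and $\Lambda_{\{j_1\}}\Lambda_{\{b_1\}}$, and commuting with $\Lambda_A$ term by term --- does check out, but it is considerably longer and makes the one-interval case depend on the straddling case, a dependency the paper's argument avoids entirely.
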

\begin{proof}[Sketch of the proof] If $B\subseteq [1;j_1]$, then by acting with morphisms $1^{\otimes \ell}\otimes\tau_R$ and $1^{\otimes m}\otimes \Delta$, the claim follows from $[\Lambda_{[1;j_1+1]},\Lambda_{B}] = 0$, as asserted by Lemma~\ref{Proposition 1.5.2}. Similarly if $B\subseteq [i_2;j_2]$.

Now suppose $B\nsubseteq [1;j_1]$ and $B\nsubseteq [i_2;j_2]$, then $B\cap [1;j_1]$ consists of a single element $n_1$ and $B\cap [i_2;j_2]$ of a single element $n_2$. One can distinguish 16 cases: each of the 4 cases $n_1 = 1 = j_1$, $n_1 = 1 \neq j_1$, $n_1 = j_1 \neq 1$, $j_1\neq n_1\neq 1$ can be combined with each of the 4 cases determined similarly by the mutual equality of $n_2$, $j_2$ and $i_2$. Each of those cases follows from one of the relations in~(\ref{explicit relations}) upon applying $\Delta$ on suitable tensor product positions.
\end{proof}

The following lemma aims to remove part of the restrictions on the set $B$.

\begin{Lemma}\label{Lemma 1.5.5}Let $A = [1;j_1]\cup[i_2;j_2]$, and $B$ be such that $B\subseteq A$ and $\vert B\cap [i_2;j_2]\vert = 1$, then $\Lambda_A$ and $\Lambda_B$ commute.
\end{Lemma}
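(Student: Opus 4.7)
The plan is to proceed by induction on $m = \vert B\cap[1;j_1]\vert$. Writing $B = B_1\cup\{n_2\}$ with $B_1\subseteq[1;j_1]$ and $n_2\in[i_2;j_2]$, the base cases $m=0$ and $m=1$ reduce respectively to Lemma~\ref{lemma Gamma of singleton} (since $B$ is then a singleton) and to Lemma~\ref{lemma two intervals two-subset} (since then $\vert B\vert=2$).

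For the inductive step with $m\geq 2$, order $B_1=\{b_1<\dots<b_m\}$. Since $b_{m-1}<b_m\leq j_1<i_2\leq n_2$, Lemma~\ref{Corollary 1.3.4} applies with $A_1=\{b_1,\dots,b_{m-1}\}$, $i=b_m$ and $A_2=\{n_2\}$, and its first standard relation reads
\[
[\Lambda_{B_1},\Lambda_{\{b_m,n_2\}}]_q = \big(q^{-2}-q^2\big)\Lambda_{\{b_1,\dots,b_{m-1},n_2\}} + \big(q-q^{-1}\big)\big(\Lambda_{\{b_m\}}\Lambda_B + \Lambda_{\{b_1,\dots,b_{m-1}\}}\Lambda_{\{n_2\}}\big).
\]
I would then commute $\Lambda_A$ across both sides. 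The left-hand side vanishes because $\Lambda_A$ commutes with $\Lambda_{\{b_m,n_2\}}$ by Lemma~\ref{lemma two intervals two-subset} and with $\Lambda_{B_1}$ by an auxiliary commutation relation $[\Lambda_A,\Lambda_{B'}]=0$ valid for any $B'\subseteq[1;j_1]$. The latter I derive from $[\Lambda_{[1;j_1]},\Lambda_{B'}]=0$ (Lemma~\ref{Proposition 1.5.2}) by applying a chain of morphisms of the form $1^{\otimes\ell}\otimes\tau_R$ and $1^{\otimes\ell}\otimes\Delta$ acting only on tensor positions strictly greater than $j_1$; these extend $\Lambda_{[1;j_1]}$ to $\Lambda_A$ while leaving $\Lambda_{B'}$ untouched, since both $\tau_R$ and $\Delta$ are algebra morphisms and hence send $1$ to $1\otimes 1$. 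On the right-hand side, the first term commutes with $\Lambda_A$ by the inductive hypothesis, the third by combining the auxiliary relation with the centrality of $\Lambda_{\{n_2\}}$ (Lemma~\ref{lemma Gamma of singleton}), and by centrality of $\Lambda_{\{b_m\}}$ the second term contributes precisely $\Lambda_{\{b_m\}}[\Lambda_A,\Lambda_B]$.

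Collecting everything yields the identity $\big(q-q^{-1}\big)\Lambda_{\{b_m\}}[\Lambda_A,\Lambda_B]=0$. To extract the sought commutation, I would mimic the concluding argument of Lemma~\ref{lemma sets 6}: since $\Lambda_{\{b_m\}}=1^{\otimes(b_m-1)}\otimes\Lambda\otimes 1^{\otimes(n-b_m)}$, expanding $[\Lambda_A,\Lambda_B]=\sum_\beta y_\beta\otimes e_\beta$ in a $\K$-basis $\{e_\beta\}$ of the tensor factors other than position $b_m$ reduces the identity to $\Lambda y_\beta=0$ for every $\beta$, and since $\Lambda$ is not a zero divisor in $U_q(\mathfrak{sl}_2)$ (which is a domain for $q$ not a root of unity) every $y_\beta$, and hence $[\Lambda_A,\Lambda_B]$ itself, must vanish. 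The main obstacle I anticipate is precisely this zero-divisor step, together with the careful bookkeeping needed to promote $[\Lambda_{[1;j_1]},\Lambda_{B'}]=0$ to $[\Lambda_A,\Lambda_{B'}]=0$ via a morphism chain that is genuinely consistent with the right extension process producing $\Lambda_A$.
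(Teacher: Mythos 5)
Your induction is on the same quantity as the paper's, with the same base cases, but the inductive step takes a genuinely different route. The paper first disposes of the case where $B\cap[1;j_1]$ is an interval (via Lemma~\ref{lemma two intervals two-subset} and Proposition~\ref{prop derived}), and otherwise splits $B$ at a \emph{hole} of $B\cap[1;j_1]$, between consecutive elements $x_1<x_2$: the resulting instance of the standard relation has $\Lambda_B$ as the term with scalar coefficient $q^{-2}-q^2$, so $\Lambda_B$ is expressed directly as a combination of six generators, each of which commutes with $\Lambda_A$ (three because they are subsets of $[1;j_1]$, three by the induction hypothesis), and no cancellation is ever required. You instead peel off the maximum $b_m$ of $B\cap[1;j_1]$, which places $\Lambda_B$ in the term $\big(q-q^{-1}\big)\Lambda_{\{b_m\}}\Lambda_B$ with an \emph{operator} coefficient; this is why you end up with $\Lambda_{\{b_m\}}[\Lambda_A,\Lambda_B]=0$ and must cancel $\Lambda_{\{b_m\}}$. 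Your cancellation step is valid: $U_q(\mathfrak{sl}_2)$ is an iterated Ore extension of $\K\big[K^{\pm1}\big]$, hence a domain, and expanding in a basis of the remaining tensor factors reduces the claim to $\Lambda y=0\Rightarrow y=0$. This is a strictly stronger input than the paper ever invokes (the $\Theta=\Xi$ argument, which occurs in the proof for the cases (\ref{sets 4 without gap}) and (\ref{sets 4 with gap}) rather than in Lemma~\ref{lemma sets 6}, only needs that $\Lambda$ is not invertible), but it is correct, and in exchange your route avoids the paper's case distinction on whether $B\cap[1;j_1]$ has holes.

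One step is wrong as written, though easily repaired: the auxiliary relation $[\Lambda_A,\Lambda_{B'}]=0$ for $B'\subseteq[1;j_1]$ cannot be obtained from $[\Lambda_{[1;j_1]},\Lambda_{B'}]=0$ by morphisms acting only on tensor positions strictly greater than $j_1$. The element $\Lambda_{[1;j_1]}$ lives in $U_q(\mathfrak{sl}_2)^{\otimes j_1}$, so there are no such positions to act on; the right extension process necessarily begins with $1^{\otimes(j_1-1)}\otimes\Delta$, which acts \emph{at} position $j_1$, and when $j_1\in B'$ (for instance when $b_m=j_1$) the full chain sends $\Lambda_{B'}$ to $\Lambda_{B'\cup\{i_2\}}$ rather than fixing it. The repair is the one the paper uses: start instead from $[\Lambda_{[1;j_1+1]},\Lambda_{B'}]=0$, which Lemma~\ref{Proposition 1.5.2} also supplies; the subsequent applications of $1^{\otimes\ell}\otimes\tau_R$ and $1^{\otimes\ell}\otimes\Delta$ then act only at positions $\geq j_1+1$, where $\Lambda_{B'}$ is trivial. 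With that substitution your argument goes through.
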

\begin{proof}By induction on $k = \vert B\cap[1;j_1]\vert$, the case $k = 0$ being trivial by Lemma \ref{lemma Gamma of singleton} and $k = 1$ following from Lemma \ref{lemma two intervals two-subset}. So let $k\geq 2$ and suppose the claim holds for all sets $B'\subset A$ with $\vert B'\cap [1;j_1]\vert$ strictly less than $k$.

If $B\cap [1;j_1]$ is a set of consecutive integers, the statement follows from Lemma \ref{lemma two intervals two-subset} upon applying $\Delta$ on suitable positions as in Proposition \ref{prop derived}. If not, then $B\cap [1;j_1]$ contains at least one hole, say between the elements $x_1$ and $x_2$. Let us write $B_1 = \{b\in B\colon b < x_1\}$ and $B_2 = \{b\in B\colon b > x_2\}$. By Lemma~\ref{Corollary 1.3.4} we may write
\begin{gather}\label{Gamma_B^q def}
\Lambda_B = \frac{[\Lambda_{B_1\cup [x_1;x_2-1]}, \Lambda_{[x_1+1;x_2]\cup B_2}]_q}{q^{-2}-q^2} + \frac{\Lambda_{[x_1+1;x_2-1]}\Lambda_{B_1\cup[x_1;x_2]\cup B_2} + \Lambda_{B_1\cup\{x_1\}}\Lambda_{\{x_2\}\cup B_2}}{q+q^{-1}}\!\!\!
\end{gather}
and hence it suffices to show that $\Lambda_A$ commutes with each of the terms in the right-hand side. For $\Lambda_{B_1\cup[x_1;x_2-1]}$, $\Lambda_{[x_1+1;x_2-1]}$ and $\Lambda_{B_1\cup\{x_1\}}$ this follows from Lemma~\ref{Proposition 1.5.2} with $A = [1;j_1+1]$, upon applying repeatedly $1^{\otimes m}\otimes\tau_R$ and $1^{\otimes\ell}\otimes\Delta$. For $\Lambda_{[x_1+1;x_2]\cup B_2}$, $\Lambda_{B_1\cup [x_1;x_2]\cup B_2}$ and $\Lambda_{\{x_2\}\cup B_2}$ this follows from the induction hypothesis. This concludes the proof.
\end{proof}

We can now also remove the constraint on $\vert B\cap [i_2;j_2]\vert$. The proof is completely similar to the previous one, we will hence only sketch it.

\begin{Proposition}\label{Proposition 1.5.6}Let $A = [1;j_1]\cup[i_2;j_2]$ and $B$ be such that $B\subseteq A$, then $\Lambda_A$ commutes with~$\Lambda_B$.
\end{Proposition}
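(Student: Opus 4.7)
The plan is to mirror the proof of Lemma~\ref{Lemma 1.5.5} almost verbatim, but now inducting on $k = \vert B\cap [i_2;j_2]\vert$ rather than on $\vert B\cap [1;j_1]\vert$. The base cases $k = 0$ and $k = 1$ are already settled: when $k = 0$ the set $B$ is contained in $[1;j_1]$ and commutation follows from Lemma~\ref{Proposition 1.5.2} after enlarging the first interval by repeated applications of $1^{\otimes m}\otimes\tau_R$ and $1^{\otimes\ell}\otimes\Delta$, while $k = 1$ is precisely Lemma~\ref{Lemma 1.5.5}.

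For the inductive step, assume the statement for all $B'\subseteq A$ with $\vert B'\cap [i_2;j_2]\vert < k$, where $k\geq 2$. First dispose of the case where $B\cap [i_2;j_2]$ is a block of consecutive integers: in this situation the pair $(A,B)$ arises from a pair with $\vert B\cap [i_2;j_2]\vert = 1$ by applying suitable morphisms $1^{\otimes n}\otimes\Delta\otimes 1^{\otimes\ell}$ acting inside the second interval, exactly in the spirit of Proposition~\ref{prop derived}, so the result follows from Lemma~\ref{Lemma 1.5.5}.

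Otherwise $B\cap [i_2;j_2]$ contains a gap between some $x_1$ and $x_2$. Writing $B_1 = \{b\in B\colon b<x_1\}$ and $B_2 = \{b\in B\colon b>x_2\}$, Lemma~\ref{Corollary 1.3.4} yields the decomposition
\[
\Lambda_B = \frac{[\Lambda_{B_1\cup [x_1;x_2-1]}, \Lambda_{[x_1+1;x_2]\cup B_2}]_q}{q^{-2}-q^2} + \frac{\Lambda_{[x_1+1;x_2-1]}\Lambda_{B_1\cup[x_1;x_2]\cup B_2} + \Lambda_{B_1\cup\{x_1\}}\Lambda_{\{x_2\}\cup B_2}}{q+q^{-1}},
\]
so it suffices to show that $\Lambda_A$ commutes with each factor on the right. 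For $\Lambda_{[x_1+1;x_2]\cup B_2}$, $\Lambda_{B_1\cup\{x_1\}}$ and $\Lambda_{[x_1+1;x_2-1]}$ the intersection with $[i_2;j_2]$ has size strictly less than $k$ (having removed at least one of the points of $B\cap[i_2;j_2]$, or having no intersection at all), so commutation is granted by the induction hypothesis. For $\Lambda_{B_1\cup[x_1;x_2-1]}$, $\Lambda_{B_1\cup[x_1;x_2]\cup B_2}$ and $\Lambda_{\{x_2\}\cup B_2}$, the indexing set is either contained in a single interval, in which case Lemma~\ref{Proposition 1.5.2} applies after enlarging the relevant interval via $\tau_R$ and $\Delta$, or again has a smaller intersection with $[i_2;j_2]$ and the hypothesis applies directly.

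The only mild subtlety will be keeping track of which of the six pieces is handled by Lemma~\ref{Proposition 1.5.2} (via a consecutive-integer argument) versus by the induction hypothesis; this is the mirror image of the bookkeeping in Lemma~\ref{Lemma 1.5.5} and should present no conceptual difficulty, since filling the hole $[x_1+1;x_2-1]$ turns the obstructing piece into a subset of a single interval of $A$.
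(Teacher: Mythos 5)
Your overall architecture matches the paper's (sketched) proof: dispose of the empty and consecutive cases via Lemma~\ref{Proposition 1.5.2} and Lemma~\ref{Lemma 1.5.5}, and otherwise decompose $\Lambda_B$ by Lemma~\ref{Corollary 1.3.4} around a hole of $B\cap[i_2;j_2]$. But the bookkeeping you defer as presenting ``no conceptual difficulty'' is exactly where the argument breaks if the induction runs on $k=\vert B\cap[i_2;j_2]\vert$. Filling a hole \emph{adds} elements to the intersection with $[i_2;j_2]$ rather than removing them: writing $B\cap[i_2;j_2]=\{c_1<\dots<c_k\}$ with $x_1=c_t$, $x_2=c_{t+1}$, the set $B_1\cup[x_1;x_2]\cup B_2=B\cup[x_1+1;x_2-1]$ meets $[i_2;j_2]$ in $k+(x_2-x_1-1)>k$ points and, since $B_1\supseteq B\cap[1;j_1]\neq\varnothing$ in the only nontrivial case, is not contained in a single interval of $A$; the same failure occurs for $B_1\cup[x_1;x_2-1]$ (take $B\cap[i_2;j_2]=\{c,c+2,c+8\}$ and the second hole: its $[i_2;j_2]$-part is $\{c\}\cup[c+2;c+7]$, of size $7>k=3$ and not consecutive). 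For these two terms neither of your alternatives --- induction hypothesis or Lemma~\ref{Proposition 1.5.2} --- applies. Your claim that $\Lambda_{[x_1+1;x_2]\cup B_2}$ and $\Lambda_{[x_1+1;x_2-1]}$ have intersection of size $<k$ is false for the same reason; those two terms are nonetheless safe, because their index sets lie entirely inside $[i_2;j_2]$ and are handled by Lemma~\ref{Proposition 1.5.2} after extending with morphisms of the type $\chi_m$ in~(\ref{chi extension left}), which is how the paper treats such pieces in Lemma~\ref{Lemma 1.5.9, Corollary 1.5.10 en 1.5.11}.

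The repair is to change the induction parameter: induct on the number of holes of $B\cap[i_2;j_2]$ rather than on its cardinality, with the hole-free case as base (settled, as you note, by Lemma~\ref{Lemma 1.5.5} together with coassociativity in the spirit of Proposition~\ref{prop derived}). Each of the six index sets in the decomposition then either lies inside a single interval of $A$ (Lemma~\ref{Proposition 1.5.2}) or has strictly fewer holes in its $[i_2;j_2]$-part, since the hole between $x_1$ and $x_2$ is either filled or truncated away and no new hole is created. Alternatively one can use the single-point decomposition~(\ref{Gamma_B^q def again}) and the ``fill up the holes one at a time'' iteration that the paper employs in Lemma~\ref{Lemma 1.5.9, Corollary 1.5.10 en 1.5.11} and Proposition~\ref{Lemma 1.5.12}. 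As written, your inductive step does not close.
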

\begin{proof}[Sketch of the proof] If $B\cap [1;j_1]$ is empty, the result follows from Lemma \ref{Proposition 1.5.2}. If it is not, then we will proceed by induction on $k = \vert B\cap[i_2;j_2]\vert$. The case $k = 0$ follows from Lemma~\ref{Proposition 1.5.2} and $k = 1$ from Lemma~\ref{Lemma 1.5.5}. Let hence $k\geq 2$ and suppose the claim holds for all values of $\vert B\cap [i_2;j_2]\vert$ strictly less than $k$. We again distinguish between $B\cap [i_2;j_2]$ without holes, in which case Lemma~\ref{Lemma 1.5.5} suffices, and $B\cap [i_2;j_2]$ containing holes, in which case we can rewrite~$\Lambda_B$ using~(\ref{Gamma_B^q def}) and apply Lemma~\ref{Proposition 1.5.2} and the induction hypothesis.
\end{proof}

Let us now take a look at the case where the set $A$ consists of more than just 2 discrete intervals.

\begin{Lemma}\label{Lemma 1.5.7 and Corollary 1.5.8}
Let $A = [1;j_1]\cup[i_2;j_2]\cup\dots\cup[i_{k-1};j_{k-1}]\cup[i_k;j_k]$, with $k\geq 3$. Then $\Lambda_A$ commutes with $\Lambda_B$, where $B$ is any of the sets
\begin{alignat*}{3}
& B= [i_1';j_1]\cup[i_2;j_2]\cup\dots\cup[i_{k-1};j_{k-1}]\cup[i_k;j_k'], \quad \ && B = [i_1';j_1]\cup[i_2;j_2]\cup\dots\cup[i_{k-1};j_{k-1}], &\\
& B = [i_2;j_2]\cup\dots\cup[i_{k-1};j_{k-1}]\cup[i_k;j_k'], \quad && B= [i_2;j_2]\cup\dots\cup[i_{k-1};j_{k-1}],&
\end{alignat*}
where $1\leq i_1'\leq j_1$ and $i_k\leq j_k'\leq j_k$.
\end{Lemma}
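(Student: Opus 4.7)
The plan is to proceed by induction on $k \geq 3$, the number of intervals in $A$, treating the four configurations of $B$ together. The base case $k = 3$ will follow the same template as the inductive step, with Proposition~\ref{Proposition 1.5.6} playing the role of the inductive hypothesis.

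My key tool will be the rank-one identity of Lemma~\ref{Corollary 1.3.4}, applied to $\Lambda_A$ at a singleton $\{i\}$ chosen in a gap of $A$. Taking $\{i\} = \{j_{k-1}+1\}$ and writing $A = A_1 \cup A_2$ with $A_1 = [1;j_1]\cup\cdots\cup[i_{k-1};j_{k-1}]$ and $A_2 = [i_k;j_k]$, Lemma~\ref{Corollary 1.3.4} yields
\[
\Lambda_A = \frac{1}{q^{-2}-q^2}[\Lambda_{A_1\cup\{i\}},\Lambda_{\{i\}\cup A_2}]_q + \frac{1}{q+q^{-1}}\bigl(\Lambda_{\{i\}}\Lambda_{A_1\cup\{i\}\cup A_2} + \Lambda_{A_1}\Lambda_{A_2}\bigr).
\]
Proving $[\Lambda_A,\Lambda_B]=0$ will then reduce to showing that $\Lambda_B$ commutes with each term on the right-hand side. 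The central factor $\Lambda_{\{i\}}$ is automatic; the sets $\{i\}\cup A_2$ and $A_2$ have at most two intervals and their commutation with $\Lambda_B$ will follow from Proposition~\ref{Proposition 1.5.6} and Lemma~\ref{Proposition 1.5.2}; and $A_1\cup\{i\}$ and $A_1$ each have $k-1$ intervals, with $B$ still fitting one of the four forms of the lemma relative to them, so that the inductive hypothesis applies to both.

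The delicate term is $\Lambda_{A_1\cup\{i\}\cup A_2}$. When the original gap $i_k-j_{k-1}-1$ equals one, this set has $k-1$ intervals and is handled directly by the inductive hypothesis. Otherwise it still has $k$ intervals, but with the last gap of $A$ now shortened by one, so I plan to iterate the rank-one decomposition, at each step shrinking the gap by one, until the inductive hypothesis applies. Case~1, where $B$ retains truncated copies of both outer intervals of $A$, does not immediately fit into the scheme since $A_1\cup\{i\}$ does not contain the last part of $B$; for this configuration I will begin by a preliminary rewriting of $\Lambda_B$ via Lemma~\ref{Corollary 1.3.4} applied in a gap of $B$, which expresses $\Lambda_B$ in terms of $\Lambda$'s attached to sets of case~2 and case~3 type, already accessible to the inductive argument. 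A symmetric split at $\{i\}=\{i_2-1\}$ handles the analogous situation when $B$ omits the first interval of $A$.

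The main obstacle, I expect, is the bookkeeping: at every stage of the nested gap-filling induction one must verify that the auxiliary sets $A_1\cup\{i\}$, $A_1\cup\{i\}\cup A_2$ and their iterated analogues continue to satisfy the hypotheses of the lemma relative to $B$, so that the inductive hypothesis may be invoked. This is a routine but tedious case analysis, with separate checks for each of the four configurations of $B$.
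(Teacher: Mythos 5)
Your strategy is genuinely different from the paper's, which disposes of the main case ($1<i_1'$ and $j_k'<j_k$) in one stroke by applying an explicit morphism, built via Proposition~\ref{prop derived}, to the seed relation $[\Lambda_{\{2,4,\dots,2k\}},\Lambda_{\{1,2,4,\dots,2k,2k+1\}}]=0$ of Lemma~\ref{Corollary 1.4.10}, and settles $i_1'=1$ or $j_k'=j_k$ by Lemma~\ref{Lemma 1.3.7}. Unfortunately your inductive scheme has a gap precisely in that main case, the first form of $B$ with both outer intervals properly truncated. In your decomposition of $\Lambda_A$ at $\{i\}=\{j_{k-1}+1\}$, already the term $\Lambda_{A_1}\Lambda_{A_2}$ causes trouble: $B$ meets $A_2=[i_k;j_k]$ in the proper nonempty subinterval $[i_k;j_k']$, so neither set contains the other and neither Proposition~\ref{Proposition 1.5.6} nor Lemma~\ref{Proposition 1.5.2} applies. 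Such partially overlapping generators do not commute in general --- for instance the pair $(\{1,3\},\{3,4\})$ satisfies the nontrivial standard relation of Lemma~\ref{Corollary 1.3.4} rather than a commutation relation --- and the same objection applies to $\Lambda_{\{i\}\cup A_2}$, and symmetrically to $\Lambda_{A_1}$ when $i_1'>1$.

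Your proposed repair for this configuration --- rewriting $\Lambda_B$ by Lemma~\ref{Corollary 1.3.4} at a point $i$ in a gap of $B$ --- cannot work either, because in the first form every gap of $B$ is also a gap of $A$, so the adjoined point $i$ lies outside $A$. The resulting sets $B\cup\{i\}$, $B_1\cup\{i\}$ and $\{i\}\cup B_2$ are then not contained in $A$ and overlap it only partially, so their commutation with $\Lambda_A$ is out of reach of any containment-based argument (and fails generically). This is not mere bookkeeping: the both-ends-truncated case carries genuine new content. Its minimal instance, $[\Lambda_{\{2,4,\dots,2k\}},\Lambda_{\{1,2,4,\dots,2k,2k+1\}}]=0$, is exactly Lemma~\ref{Corollary 1.4.10}, which the paper must prove by a separate induction resting on the fundamental relation~(\ref{sets 6 with gap}), Lemma~\ref{Lemma 1.4.7} and Corollary~\ref{Corollary 1.4.8}; your proposal never invokes this ingredient or any substitute for it. The remaining three forms of $B$ (and the first form with $i_1'=1$ or $j_k'=j_k$) do go through along the lines you sketch, but those are the easy cases --- the paper obtains them from the first form and from Lemma~\ref{Lemma 1.3.7} directly.
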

\begin{proof}Let us start with the first claim. If $i_1' = 1$ or $j_k' = j_k$, this follows from Lemma~\ref{Lemma 1.3.7}. So suppose $1 < i_1' , j_k' < j_k$. Then the statement follows upon applying
\begin{gather*}
\chi = \left(\overrightarrow{\prod_{\ell=j_k-i_1'+1}^{j_k-2}}\big(\Delta\otimes 1^{\otimes \ell}\big)\right)\left(\overrightarrow{\prod_{\ell=j_k-j_1}^{j_k-i_1'-1}}\big(1\otimes\Delta\otimes 1^{\otimes \ell}\big)\right)
\left(\overleftarrow{\prod_{n=1}^{2k-3}}\overrightarrow{\prod_{\ell=\alpha_{n,\mathbf{i},\mathbf{j}}}^{\beta_{n,\mathbf{i},\mathbf{j}}}}\big(1^{\otimes (n+1)}\otimes\Delta\otimes 1^{\otimes \ell}\big)\right)\\
\hphantom{\chi =}{}\times
\left(\overrightarrow{\prod_{\ell=j_k-j_k'}^{j_k-i_k-1}}\big(1^{\otimes (2k-1)}\otimes\Delta\otimes 1^{\otimes \ell}\big)\right)\left(\overrightarrow{\prod_{\ell=0}^{j_k-j_k'-2}}\big(1^{\otimes 2k}\otimes\Delta\otimes 1^{\otimes \ell}\big)\right)
\end{gather*}
to the statement of Lemma~\ref{Corollary 1.4.10}, as asserted by Proposition~\ref{prop derived}. The second claim follows from the first with $A$ replaced by $[1;j_1]\cup[i_2;j_2]\cup\dots\cup[i_{k-1};j_{k-1}+1]$, acted upon by iterations of $1^{\otimes \ell}\otimes\tau_R$ and $1^{\otimes m}\otimes \Delta$. Similarly for the other statements.
\end{proof}

For any $m\in\N$ one can define the following operators:
\begin{gather}\label{chi extension left}
\chi_{m} = \overleftarrow{\prod_{n=1}^{m}}\left(\left(\overrightarrow{\prod_{\ell=j_k-j_{n}}^{j_k-i_n-\delta_{n,1}}}\big(\Delta\otimes 1^{\otimes \ell}\big)\right)\left(\overrightarrow{\prod_{\ell=j_k-i_{n+1}+1}^{j_k-j_n-1}}\big(\tau_L\otimes 1^{\otimes \ell}\big)\right)\right), \\
\label{chi extension right}
\widetilde{\chi}_{m} = \overrightarrow{\prod_{n=m}^{k}}\left(\left(\overrightarrow{\prod_{\ell=i_{n}-1}^{j_n-1-\delta_{n,k}}}\big(1^{\otimes \ell}\otimes\Delta\big)\right)\left(\overrightarrow{\prod_{\ell=j_{n-1}}^{i_n-2}}\big(1^{\otimes \ell}\otimes\tau_R\big)\right)\right).
\end{gather}
These will be of use to show the following commutation relation.

\begin{Lemma}\label{Lemma 1.5.9, Corollary 1.5.10 en 1.5.11}Let $A = [1;j_1]\cup[i_2;j_2]\cup\dots\cup[i_{k-1};j_{k-1}]\cup[i_k;j_k]$ and let $B$ be any of the sets
\[
B = [i_1';j_1]\cup[i_2;j_2]\cup\dots\cup[i_{k-1};j_{k-1}]\cup B_k, \qquad B = [i_2;j_2]\cup\dots\cup[i_{k-1};j_{k-1}]\cup B_k,
\]
where $k\geq 3$, $1\leq i_1'\leq j_1$ and where $B_k\subseteq [i_k;j_k]$. Then $\Lambda_A$ and $\Lambda_B$ commute.
\end{Lemma}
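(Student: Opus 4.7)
The proof proceeds by induction on $|B_k|$, along the lines of Proposition \ref{Proposition 1.5.6}.

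The base case $|B_k| = 0$ is immediate from Lemma \ref{Lemma 1.5.7 and Corollary 1.5.8}, since $B$ then coincides with one of the two forms handled there. For the inductive step, I first treat the case where $B_k$ is a single subinterval $[i_k';j_k']$. If $i_k' = i_k$, Lemma \ref{Lemma 1.5.7 and Corollary 1.5.8} still applies directly. If $i_k' > i_k$, I invoke Lemma \ref{Corollary 1.3.4} with $A_1$ the middle part of $B$, $i = i_k$ and $A_2 = [i_k';j_k']$; this rewrites $\Lambda_B$ as a $q$-commutator of $\Lambda_{A_1\cup\{i_k\}}$ and $\Lambda_{\{i_k\}\cup[i_k';j_k']}$, together with products of $\Lambda_{A_1\cup\{i_k\}\cup[i_k';j_k']}$, the central element $\Lambda_{\{i_k\}}$, $\Lambda_{A_1}$ and $\Lambda_{[i_k';j_k']}$.

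When $B_k$ has holes, I pick two consecutive elements $x_1 < x_2$ of $B_k$ separated by a gap and apply the splitting identity (\ref{Gamma_B^q def}). Each of the six resulting pieces either has strictly fewer elements in its $B_k$-part, in which case the induction hypothesis applies, or is supported in the single interval $[i_k;j_k]$.

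The main obstacle is therefore establishing $[\Lambda_A, \Lambda_C] = 0$ for $C \subseteq [i_k;j_k]$, since such $C$ does not contain the middle intervals of $A$ and so falls outside the current hypotheses. I plan to deal with this by applying a morphism built from the operators $\chi_m$ and $\widetilde{\chi}_m$ of (\ref{chi extension left}) and (\ref{chi extension right}) to the rank-two commutation supplied by Proposition \ref{Proposition 1.5.6} on a suitably collapsed ambient set, using Lemma \ref{Lemma 1.3.7} to account for the positions corresponding to the middle intervals. Once this auxiliary commutation is in place, every piece produced by (\ref{Gamma_B^q def}) and by Lemma \ref{Corollary 1.3.4} commutes with $\Lambda_A$, and the induction closes.
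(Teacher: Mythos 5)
Your skeleton is close to the paper's: the base case via Lemma~\ref{Lemma 1.5.7 and Corollary 1.5.8}, the splitting of $\Lambda_B$ through Lemma~\ref{Corollary 1.3.4}, and above all the identification of the auxiliary commutation $[\Lambda_A,\Lambda_C]=0$ for $C\subseteq[i_k;j_k]$ as the key missing ingredient are all correct; the paper obtains that auxiliary fact from Lemma~\ref{Proposition 1.5.2} applied to $[\Lambda_{[1;j_k-i_k+2]},\Lambda_{C-(i_k-2)}]=0$ followed by $\chi_{k-1}$ of~(\ref{chi extension left}), which is essentially your plan. The genuine gap is that your induction on $|B_k|$ does not close, because the splitting identities produce terms whose $B_k$-part is strictly \emph{larger}. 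In your single-subinterval step with $i_k'>i_k$, the piece $\Lambda_{A_1\cup\{i_k\}\cup[i_k';j_k']}=\Lambda_{B\cup\{i_k\}}$ has $B_k$-part of size $|B_k|+1$, still contains a hole, and contains the middle intervals, so it is covered neither by the induction hypothesis nor by the auxiliary commutation; your proposal never says how to dispose of it. Likewise, in the holes step the piece $\Lambda_{B_1\cup[x_1;x_2]\cup B_2}=\Lambda_{B\cup[x_1+1;x_2-1]}$ from~(\ref{Gamma_B^q def}) fills the gap between $x_1$ and $x_2$ and therefore has a strictly larger $B_k$-part while still containing $[i_2;j_2]\cup\dots\cup[i_{k-1};j_{k-1}]$; the same can happen to $\Lambda_{B_1\cup[x_1;x_2-1]}$ when the gap is long and $B_2$ is small. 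So the claim that each of the six pieces either has a strictly smaller $B_k$-part or is supported in $[i_k;j_k]$ is false, and the induction stalls.

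The repair is to make the growth of the set the engine of the proof rather than an obstruction, which is what the paper does. Split at a \emph{missing} element $x_1=\min([i_k;j_k]\setminus B_k)$, taking $B_1=\{b\in B\colon b<x_1\}$ and $B_2=\{b\in B_k\colon b>x_1\}$ in~(\ref{Gamma_B^q def again}); then $B_1$ and $B_1\cup\{x_1\}$ are of the shape already handled by Lemma~\ref{Lemma 1.5.7 and Corollary 1.5.8}, $B_2$ and $\{x_1\}\cup B_2$ are absorbed by the auxiliary commutation, and the \emph{only} unknown term is $\Lambda_{B\cup\{x_1\}}$, giving
\[
[\Lambda_A,\Lambda_B]=\frac{1}{q+q^{-1}}\,\Lambda_{\{x_1\}}\,[\Lambda_A,\Lambda_{B\cup\{x_1\}}].
\]
Iterating fills the holes of $B_k$ one element at a time until $B_k\cup\{x_1,\dots,x_m\}$ is an interval $[i_k;j_k']$ and Lemma~\ref{Lemma 1.5.7 and Corollary 1.5.8} finishes the argument. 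Equivalently, run your induction on $|[i_k;\max(B_k)]\setminus B_k|$, which strictly decreases at each step, instead of on $|B_k|$, which increases.
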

\begin{proof}We will start with the first claim. If $B_k$ is empty or of the form $[i_k;j_k']$ with $i_k\leq j_k'\leq j_k$, the statement follows from Lemma~\ref{Lemma 1.5.7 and Corollary 1.5.8}. So suppose $B_k$ is nonempty and not of that particular form. We will prove the special case $i_1' = j_1 = 2$, the general case then follows upon applying $\Delta$ on tensor product positions 1 and 2. Let $x_1 = \min([i_k;j_k]\setminus B_k)$. Define the sets
\begin{gather*}
B_1 = \{b\in B\colon b < x_1\} = \{2\}\cup[i_2;j_2]\cup\dots\cup[i_{k-1};j_{k-1}]\cup[i_k;x_1-1], \\ B_2 = \{b\in B_k\colon b > x_1\},
\end{gather*}
where we interpret $[i_k;x_1-1]$ as the empty set in case $x_1 = i_k$. Note that both sets are nonempty, by the assumptions on $B_k$. By Lemma~\ref{Corollary 1.3.4} we have
\begin{gather}\label{Gamma_B^q def again}
\Lambda_B = \frac{[\Lambda_{B_1\cup\{x_1\}},\Lambda_{\{x_1\}\cup B_2}]_q}{q^{-2}-q^2} + \frac{\Lambda_{\{x_1\}}\Lambda_{B\cup\{x_1\}} + \Lambda_{B_1}\Lambda_{B_2}}{q+q^{-1}}.
\end{gather}
Both $\Lambda_{B_1}$ and $\Lambda_{B_1\cup\{x_1\}}$ commute with $\Lambda_A$, as follows from Lemma~\ref{Lemma 1.5.7 and Corollary 1.5.8} and the form of~$B_1$. Moreover, Lemma~\ref{Proposition 1.5.2} asserts
\[
[\Lambda_{[1;j_k-i_k+2]},\Lambda_{B_2-(i_k-2)}] = 0, \qquad [\Lambda_{[1;j_k-i_k+2]},\Lambda_{(\{x_1\}\cup B_2)-(i_k-2)}] = 0.
\]
Acted upon with $\chi_{k-1}$, defined in~(\ref{chi extension left}), this implies $\Lambda_A$ commutes with $\Lambda_{B_2}$ and $\Lambda_{\{x_1\}\cup B_2}$. Hence it follows from~(\ref{Gamma_B^q def again}) that
\[
[\Lambda_A,\Lambda_{B}] = \frac{1}{q+q^{-1}} \Lambda_{\{x_1\}}[\Lambda_A,\Lambda_{B\cup\{x_1\}}].
\]
We now repeat our reasoning. Either $B_k\cup\{x_1\}$ is of the form $[i_k;j_k']$, in which case the statement follows from Lemma \ref{Lemma 1.5.7 and Corollary 1.5.8}. If not, then upon defining $x_2 = \min([i_k;j_k]\setminus(B_k\cup\{x_1\}))$, we find by the same arguments
\[
[\Lambda_A,\Lambda_{B\cup\{x_1\}}] = \frac{1}{q+q^{-1}}\Lambda_{\{x_2\}}[\Lambda_A,\Lambda_{B\cup\{x_1,x_2\}}].
\]
If we continue this process, then at some point the set $B_k\cup \{x_1,x_2,\dots,x_m\}$ will inevitably be of the form $[i_k;j_k']$, with $j_k' = \max(B)$, namely when we have \emph{filled up all the holes} in the set~$B_k$. At this point we have{\samepage
\[
[\Lambda_A,\Lambda_B] = \frac{1}{\big(q+q^{-1}\big)^m}\left(\prod_{\ell=1}^{m}\Lambda_{\{x_{\ell}\}}\right)[\Lambda_A,\Lambda_{B\cup\{x_1,\dots,x_{m}\}}] = 0,
\]
where the last step uses Lemma~\ref{Lemma 1.5.7 and Corollary 1.5.8}. This shows the claim for the first given form of~$B$.}

For the second form, our first claim asserts
\[
[\Lambda_{([i_2-1;j_2]\cup\dots\cup[i_{k-1};j_{k-1}]\cup[i_k;j_k])-(i_2-2)},\Lambda_{([i_2;j_2]\cup\dots\cup[i_{k-1};j_{k-1}]\cup B_k)-(i_2-2)}] = 0,
\]
which yields the anticipated result by the left extension process.
\end{proof}

We can now also replace $[i_1';j_1]$ in the previous lemma by an arbitrary set. The proof will be completely parallel to the previous one.

\begin{Proposition}
\label{Lemma 1.5.12}
Let $A = [1;j_1]\cup[i_2;j_2]\cup\dots\cup[i_{k-1};j_{k-1}]\cup[i_k;j_k]$, with $k\geq 3$, and let $B$ be such that $B\subseteq A$ and $[i_2;j_2]\cup\dots\cup[i_{k-1};j_{k-1}]\subseteq B$, then $\Lambda_A$ commutes with $\Lambda_B$.
\end{Proposition}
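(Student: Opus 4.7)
Plan of proof. The argument mirrors that of Lemma~\ref{Lemma 1.5.9, Corollary 1.5.10 en 1.5.11}, but now fills holes in the first-interval part $B_1 := B \cap [1;j_1]$ rather than in the last-interval part. We decompose $B = B_1 \cup [i_2;j_2]\cup\dots\cup[i_{k-1};j_{k-1}] \cup B_k$, and observe that if $B_1 = \varnothing$ or $B_1 = [i_1';j_1]$ for some $i_1'$, then $B$ is precisely of one of the two forms handled by Lemma~\ref{Lemma 1.5.9, Corollary 1.5.10 en 1.5.11} and we are done. This furnishes the base case of an induction on the number of elements of $[1;j_1]$ missing from $B_1$.

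For the inductive step, set $x_1 = \max([1;j_1]\setminus B_1)$ and split $B = B^{(1)} \cup B^{(2)}$ with $B^{(1)} = \{b\in B : b < x_1\}$ and $B^{(2)} = \{b\in B : b > x_1\}$. Maximality of $x_1$ forces $B^{(2)}\cap[1;j_1]$ to equal $[x_1+1;j_1]$ (or to be empty if $x_1=j_1$), so both $B^{(2)}$ and $\{x_1\}\cup B^{(2)}$ fall into the scope of Lemma~\ref{Lemma 1.5.9, Corollary 1.5.10 en 1.5.11} and therefore commute with $\Lambda_A$. Applying Lemma~\ref{Corollary 1.3.4} to the triple $A_1 = B^{(1)}$, $i = x_1$, $A_2 = B^{(2)}$ we rewrite
\[
\Lambda_B = \frac{[\Lambda_{B^{(1)}\cup\{x_1\}},\Lambda_{\{x_1\}\cup B^{(2)}}]_q - \bigl(q-q^{-1}\bigr)\bigl(\Lambda_{\{x_1\}}\Lambda_{B\cup\{x_1\}} + \Lambda_{B^{(1)}}\Lambda_{B^{(2)}}\bigr)}{q^{-2}-q^2}.
\]
Taking the $q$-commutator with $\Lambda_A$ on both sides and using $[\Lambda_A,\Lambda_{\{x_1\}}]=0$ together with the commutations listed above should, once the remaining auxiliary commutations are established, collapse the problem to $[\Lambda_A,\Lambda_B] = \tfrac{1}{q+q^{-1}}\Lambda_{\{x_1\}}\,[\Lambda_A,\Lambda_{B\cup\{x_1\}}]$. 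Since $B_1\cup\{x_1\}$ has strictly fewer missing elements in $[1;j_1]$ than $B_1$, iterating eventually lands $B$ in the base case and yields $[\Lambda_A,\Lambda_B]=0$.

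The main obstacle lies in the two auxiliary commutations $[\Lambda_A,\Lambda_{B^{(1)}}]=0$ and $[\Lambda_A,\Lambda_{B^{(1)}\cup\{x_1\}}]=0$, neither of which fits directly into Lemma~\ref{Lemma 1.5.9, Corollary 1.5.10 en 1.5.11}. When $x_1<j_1$, both sets are contained in $[1;j_1-1]$, and one obtains the commutations by starting from Lemma~\ref{Proposition 1.5.2} (applied to the single interval $[1;j_1]$) in $\Uqsl^{\otimes j_1}$ and applying an algebra morphism $\chi$ built as a composition of maps of the form $1^{\otimes m}\otimes\Delta$ and $1^{\otimes m}\otimes\tau_R$ with $m\geq j_1-1$, chosen so that $\chi(\Lambda_{[1;j_1]}) = \Lambda_A$; since $\Lambda_{B^{(1)}}$ and $\Lambda_{B^{(1)}\cup\{x_1\}}$ act trivially from position $j_1$ onwards, they are fixed by $\chi$, and commutation passes through. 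The delicate case is $x_1=j_1$, where $B^{(1)}\cup\{x_1\} = B_1\cup\{j_1\}$ has nontrivial action at position $j_1$; here we instead invoke Proposition~\ref{Proposition 1.5.6} on the two-interval set $A' = [1;j_1]\cup[i_2;i_2]$, which contains $B_1\cup\{j_1\}$, and lift the resulting commutation in $\Uqsl^{\otimes i_2}$ to $\Uqsl^{\otimes n}$ by a right-extension morphism operating only on positions $\geq i_2$, which fixes $\Lambda_{B_1\cup\{j_1\}}$ because $\max(B_1\cup\{j_1\}) = j_1 < i_2$.
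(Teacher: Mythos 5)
Your proof is correct and follows essentially the same route as the paper's: the same choice $x_1=\max([1;j_1]\setminus B)$, the same split of $B$ at $x_1$ via Lemma~\ref{Corollary 1.3.4}, the same reduction $[\Lambda_A,\Lambda_B]=\tfrac{1}{q+q^{-1}}\Lambda_{\{x_1\}}[\Lambda_A,\Lambda_{B\cup\{x_1\}}]$, and the same hole-filling recursion terminating in Lemma~\ref{Lemma 1.5.9, Corollary 1.5.10 en 1.5.11}. The only divergence is cosmetic: for the auxiliary commutations $[\Lambda_A,\Lambda_{B^{(1)}}]=[\Lambda_A,\Lambda_{B^{(1)}\cup\{x_1\}}]=0$ the paper tacitly lifts Lemma~\ref{Proposition 1.5.2} from the enlarged interval $[1;j_1+1]$ (as in the proof of Lemma~\ref{Lemma 1.5.5}), which covers $x_1=j_1$ without your separate appeal to Proposition~\ref{Proposition 1.5.6}; both devices work, and note that in your displayed step the bracket with $\Lambda_A$ is the ordinary commutator, not the $q$-commutator.
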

\begin{proof}[Sketch of the proof] If $B\cap [1;j_1]$ is empty or of the form $[i_1';j_1]$, the claim follows from Lemma~\ref{Lemma 1.5.9, Corollary 1.5.10 en 1.5.11}. If not, then with $x_1 = \max([1;j_1]\setminus B)$, $B_1 = \{b\in B\colon b < x_1\}$ and $B_2 = \{b\in B\colon b > x_1\} = [x_1+1;j_1]\cup[i_2;j_2]\cup\dots\cup[i_{k-1};j_{k-1}]\cup (B\cap [i_k;j_k])$, we have again the relation~(\ref{Gamma_B^q def again}). Each of the terms in the right-hand side of (\ref{Gamma_B^q def again}) commutes with $\Lambda_A$, by Lemmas~\ref{Proposition 1.5.2} and~\ref{Lemma 1.5.9, Corollary 1.5.10 en 1.5.11}, except for $\Lambda_{B\cup\{x_1\}}$. Defining recursively
$x_i = \max([1;j_1]\setminus (B\cup\{x_1,\dots,x_{i-1}\}))$, we find
\[
[\Lambda_A,\Lambda_B] = \frac{1}{q+q^{-1}} \Lambda_{\{x_1\}}[\Lambda_A,\Lambda_{B\cup\{x_1\}}] = \frac{1}{\big(q+q^{-1}\big)^2}\Lambda_{\{x_{1}\}}\Lambda_{\{x_2\}}[\Lambda_A,\Lambda_{B\cup\{x_1,x_2\}}] = \cdots,
\]
which eventually becomes zero, since at some point the set $(B\cup\{x_1,\dots,x_m\})\cap[1;j_1]$ will inevitably be of the form $[i_1';j_1]$, with $i_1' = \min(B)$, such that Lemma \ref{Lemma 1.5.9, Corollary 1.5.10 en 1.5.11} will be applicable.
\end{proof}

\section{Proof of Theorem \ref{thm - commutation general}}\label{section - proof commutation general}
\begin{proof}Without loss of generality, we may assume $\min(A) = 1$ and thus we can write $A = [1;j_1]\cup[i_2;j_2]\cup\dots\cup[i_k;j_k]$. For ease of notation, we will write $A_{\ell}$ for the discrete interval $[i_{\ell};j_{\ell}]$. We will proceed by induction on $k$.

For $k = 1$ the statement follows from Lemma \ref{Proposition 1.5.2}. For $k = 2$ we may invoke Proposition \ref{Proposition 1.5.6}. Suppose hence that $k \geq 3$ and that the statement holds for any set $A'$ consisting of strictly less than $k$ discrete intervals, and for any set $B'$ contained in $A'$.
We distinguish four cases.

{\bf Case 1:} $A_2\cup\dots\cup A_{k-1}\subseteq B$.

This case follows immediately from Proposition \ref{Lemma 1.5.12}.

{\bf Case 2:} $B\cap A_k = \varnothing$.

In this case the induction hypothesis asserts $[\Lambda_{A_1\cup\dots\cup A_{k-1}\cup\{j_{k-1}+1\}},\Lambda_{B}] \allowbreak = 0$, which implies our claim upon acting repeatedly with $1^{\otimes\ell}\otimes\tau_R$ and $1^{\otimes m}\otimes\Delta$.

{\bf Case 3:} $B\cap A_1 = \varnothing$.

Similarly, using the induction hypothesis and the left extension process.

{\bf Case 4:} $A_2\cup\dots\cup A_{k-1} \nsubseteq B$, $B\cap A_1\neq\varnothing$ and $B\cap A_k\neq\varnothing$.

Let $x_1\in A\setminus B$ be such that $x_1\in A_i$ with $i\in \{2,\dots,k-1\}$. Let us define the sets $B_1 = \{b\in B\colon b < x_1\}$ and $B_2 = \{b\in B\colon b > x_1\}$. Then it follows from Lemma~\ref{Corollary 1.3.4} that
\begin{gather}\label{basis relation Gamma}
\Lambda_B = \frac{[\Lambda_{B_1\cup\{x_1\}},\Lambda_{\{x_1\}\cup B_2}]_q}{q^{-2}-q^2} + \frac{\Lambda_{\{x_1\}}\Lambda_{B\cup\{x_1\}} + \Lambda_{B_1}\Lambda_{B_2}}{q+q^{-1}}.
\end{gather}
Note that none of the indexing sets above is empty, by our assumptions on~$B$. Observe that $B_1$ and $B_1\cup\{x_1\}$ are contained in $A_1\cup\dots\cup A_i\cup\{j_i+1\}$, which contains strictly fewer discrete intervals than $k$. Hence the induction hypothesis asserts
\begin{gather*}%\label{follows from IH 1}
[\Lambda_{B_1},\Lambda_{A_1\cup\dots\cup A_i\cup\{j_i+1\}}] = 0, \qquad [\Lambda_{B_1\cup\{x_1\}},\Lambda_{A_1\cup\dots\cup A_i\cup\{j_i+1\}}] = 0,
\end{gather*}
which by $\widetilde{\chi}_{i+1}$, as defined in~(\ref{chi extension right}), asserts that $\Lambda_{B_1}$ and $\Lambda_{B_1\cup\{x_1\}}$ commute with $\Lambda_A$. A similar reasoning using the induction hypothesis and the morphism $\chi_{i-1}$, defined in~(\ref{chi extension left}), shows that~$\Lambda_A$ commutes with~$\Lambda_{B_2}$ and $\Lambda_{\{x_1\}\cup B_2}$. The expression~(\ref{basis relation Gamma}) now implies
\[
[\Lambda_A,\Lambda_B] = \frac{1}{q+q^{-1}}\Lambda_{\{x_1\}}[\Lambda_A,\Lambda_{B\cup\{x_1\}}].
\]

We may now repeat this reasoning. Either $B\cup\{x_1\}$ contains $A_2\cup\dots\cup A_{k-1}$, in which case $[\Lambda_A,\Lambda_{B\cup\{x_1\}}] = 0$ by Proposition~\ref{Lemma 1.5.12}. If not, then as established before $A\setminus B$ contains an element $x_2$ contained in $A_j$ for a certain $j\in\{2,\dots,k-1\}$, and as before this implies that
\[
[\Lambda_A,\Lambda_{B\cup\{x_1\}}] = \frac{1}{q+q^{-1}}\Lambda_{\{x_2\}}[\Lambda_A,\Lambda_{B\cup\{x_1,x_2\}}].
\]
If we continue this process, then at some point the set $B\cup\{x_1,\dots,x_m\}$ must inevitably contain $A_2\cup\dots\cup A_{k-1}$. At this point we have{\samepage
\[
[\Lambda_A,\Lambda_B] = \frac{1}{\big(q+q^{-1}\big)^m}\left(\prod_{\ell=1}^{m}\Lambda_{\{x_{\ell}\}}\right)[\Lambda_A,\Lambda_{B\cup\{x_1,\dots,x_{m}\}}] = 0,
\]
where in the last step we have used Proposition~\ref{Lemma 1.5.12}. This concludes the proof.}
\end{proof}

\section{Proof of Theorem \ref{thm - most general algebra relations}}\label{section - proof most general algebra relations}

\begin{proof}We will start by proving the case $A_1, A_2, A_3\neq\varnothing$ and $A_4 = \varnothing$. It suffices to do this in the situation where $A_1 = \{1\}$, $\min(A_2) = 2$ and $A_3 = \{\max(A_2)+1\}$, the general case then follows upon applying suitable morphisms of the form $\Delta\otimes 1^{\otimes n}$, $\tau_L\otimes 1^{\otimes n}$, $1^{\otimes n}\otimes\Delta$ and $1^{\otimes n}\otimes\tau_R$. Let us write $A_2$ in the form
\[
A_2 = [2;j_1]\cup[i_2;j_2]\cup\dots\cup[i_k;j_k].
\]
Cases (\ref{most general sets 1}), (\ref{most general sets 2}) and (\ref{most general sets 3}) then follow from (\ref{sets 1}), (\ref{sets 2}) and (\ref{sets 3}) respectively, by
\begin{gather*}%\label{chi in derived 1}
\chi =
\left(\overrightarrow{\prod_{\ell=j_k-j_1+1}^{j_k-2}}\big(1\otimes\Delta\otimes 1^{\otimes \ell}\big)\right)
\left(\overleftarrow{\prod_{n=1}^{2k-2}}\overrightarrow{\prod_{\ell=\alpha_{n,\mathbf{i},\mathbf{j}}+1}^{\beta_{n,\mathbf{i},\mathbf{j}}+1}}\big(1^{\otimes (n+1)}\otimes\Delta\otimes 1^{\otimes \ell}\big)\right).
\end{gather*}
Indeed, it follows from Proposition \ref{prop derived}, after separating the terms corresponding to $n = 0$ and applying coassociativity, that
\begin{gather*}
\Lambda_A = \left(\overrightarrow{\prod_{\ell=j_k-j_1+1}^{j_k-2}}\big(1\otimes\Delta\otimes 1^{\otimes \ell}\big)\right)\big(\Delta\otimes 1^{\otimes (j_k-j_1+1)}\big)\\
\hphantom{\Lambda_A =}{}\times \left(\overleftarrow{\prod_{n=1}^{2k-2}}\overrightarrow{\prod_{\ell=\alpha_{n,\mathbf{i},\mathbf{j}}+1}^{\beta_{n,\mathbf{i},\mathbf{j}}+1}}\big(1^{\otimes n}\otimes\Delta\otimes 1^{\otimes \ell}\big)\right)\Lambda_{\{1,3,5,\dots,2k-1\}},
\end{gather*}
where $\Lambda_{\{1,3,5,{\dots},2k{-}1\}}\!$ is considered inside a $(2k)$-fold tensor product.
The morphism $\Delta {\otimes} 1^{\!{\otimes} (j_k{-}j_1{+}1)\!}\!$ can be shifted through the subsequent ones by~(\ref{Lemma 1.2.1 eq}), thus acting directly on $\Lambda_{\{1,3,5,\dots,2k-1\}}$ by sending it to $\Lambda_{\{1,2,4,6,\dots,2k\}}$. Hence we have
$
\Lambda_A = \chi(\Lambda_{\{1,2,4,6,\dots,2k\}})
$.
Similarly, $\chi$ sends $\Lambda_{\{2,4,6,\dots,2k,2k+1\}}$ to $\Lambda_B$ and so on.

The case $A_1 = \varnothing$ and $A_2, A_3, A_4\neq\varnothing$ is identical.

Now consider the case $A_2 = \varnothing$ and $A_1, A_3, A_4 \neq \varnothing$. Then (\ref{most general sets 2}) and (\ref{most general sets 3}) follow from Theorem \ref{thm - commutation general} and (\ref{Lambda emptyset}). To show (\ref{most general sets 1}), it again suffices to prove $[\Lambda_A, \Lambda_B] = 0$, by (\ref{Lambda emptyset}), and to take $A_1 = \{1\}$, $\min(A_3) = 2$ and $A_4 = \{\max(A_3)+1\}$. Let us write $A_3$ in the form
\[
A_3 = [2;j_1]\cup[i_2;j_2]\cup\dots\cup[i_k;j_k].
\]
By
\[
\chi =\overleftarrow{\prod_{n=0}^{2k-2}}\overrightarrow{\prod_{\ell=\alpha_{n,\mathbf{i},\mathbf{j}}}^{\beta_{n,\mathbf{i},\mathbf{j}}}}\big(1^{\otimes (n+1)}\otimes\Delta\otimes 1^{\otimes (\ell+1)}\big)
\]
with $i_1 = 2$, this follows from Lemma \ref{Lemma 1.3.8}.

The case $A_3 = \varnothing$ and $A_1, A_2, A_4 \neq\varnothing$ is identical.

If two or more of the $A_i$ are empty, then again the statements follow from Theorem~\ref{thm - commutation general} and~(\ref{Lambda emptyset}).

Finally, suppose none of the $A_i$ is empty. Again, it suffices to consider the special case where $A_1 = \{1\}$, $\min(A_2) = 2$ and $A_4 = \{\max(A_3)+1\}$. We will write $A_2$ and $A_3$ as disjoint unions of discrete intervals. If $\min(A_3) = \max(A_2) + 1$, then
\begin{gather*}
A_2 = [2;j_1]\cup[i_2;j_2]\cup\dots\cup[i_k;j_k], \\ A_3 = [j_k+1;j_{k+1}]\cup[i_{k+2};j_{k+2}]\dots\cup[i_{k+\ell+1};j_{k+\ell+1}].
\end{gather*}
Cases (\ref{most general sets 1}), (\ref{most general sets 2}) and (\ref{most general sets 3}) then follow from (\ref{sets 4 without gap}), (\ref{sets 5 without gap}) and (\ref{sets 6 without gap}) respectively, by
\begin{gather*}
\chi = \left(\overleftarrow{\prod_{n=0}^{2k-2}} \overrightarrow{\prod_{\ell=\alpha_{n,\mathbf{i},\mathbf{j}}+1}^{\beta_{n,\mathbf{i},\mathbf{j}}+1}}\big(1^{\otimes (n+1)}\otimes\Delta\otimes 1^{\otimes \ell}\big)\right)\left(\overrightarrow{\prod_{\ell=j_{k+\ell+1}-j_{k+1}+1}^{j_{k+\ell+1}-j_k-1}}\big(1^{\otimes 2k}\otimes\Delta\otimes 1^{\otimes \ell}\big)\right)\\
\hphantom{\chi =}{}\times \left(\overleftarrow{\prod_{n=2k+1}^{2k+2\ell}} \overrightarrow{\prod_{\ell=\alpha_{n,\mathbf{i},\mathbf{j}}+1}^{\beta_{n,\mathbf{i},\mathbf{j}}+1}}\big(1^{\otimes n}\otimes\Delta\otimes 1^{\otimes \ell}\big)\right),
\end{gather*}
where $\mathbf{i} = (2,i_2,\dots,i_k, j_k+1, i_{k+2}, \dots, i_{k+\ell+1})$ and $\mathbf{j} = (j_1,\dots,j_{k+\ell+1})$.

If $\min(A_3) > \max(A_2)+1$, then we have
\begin{gather*}
A_2 = [2;j_1]\cup[i_2;j_2]\cup\dots\cup[i_k;j_k], \qquad A_3 = [i_{k+1};j_{k+1}]\cup[i_{k+2};j_{k+2}]\dots\cup[i_{k+\ell+1};j_{k+\ell+1}]
\end{gather*}
and by
\begin{gather*}
\chi = \overleftarrow{\prod_{n=0}^{2k+2\ell}} \overrightarrow{\prod_{\ell=\alpha_{n,\mathbf{i},\mathbf{j}}+1}^{\beta_{n,\mathbf{i},\mathbf{j}}+1}}\big(1^{\otimes (n+1)}\otimes\Delta\otimes 1^{\otimes \ell}\big),
\end{gather*}
with $\mathbf{i} = (2,i_2,\dots, i_{k+\ell+1})$ and $\mathbf{j} = (j_1,\dots,j_{k+\ell+1})$, the cases (\ref{most general sets 1}), (\ref{most general sets 2}) and (\ref{most general sets 3}) follow from (\ref{sets 4 with gap}), (\ref{sets 5 with gap}) and (\ref{sets 6 with gap}) respectively.
\end{proof}

\section[Higher rank $q$-Bannai--Ito relations]{Higher rank $\boldsymbol{q}$-Bannai--Ito relations}\label{section q-BI}

Each of the results and proofs outlined in this paper carries over to the higher rank $q$-Bannai--Ito algebra, which is canonically isomorphic to the higher rank Askey--Wilson algebra. In this final section we will apply the methods of Section~\ref{Section - Defining the higher rank generators} to construct this higher rank extension of the $q$-Bannai--Ito algebra and we will argue why the Theorems~\ref{thm - commutation general} and \ref{thm - most general algebra relations} have natural $q$-Bannai--Ito analogs.

Let us start by the defining the quantum superalgebra $\ospq$ as the $\mathbb{Z}_2$-graded associative algebra over a field $\K$ generated by elements $A_+$, $A_-$, $K$, $K^{-1}$ and $P$ subject to the relations
\begin{gather*}%\label{ospq def with K}
KA_{+}K^{-1} = q^{1/2} A_{+}, \qquad KA_{-}K^{-1} = q^{-1/2} A_{-}, \qquad \{A_+,A_-\} = \frac{K^2-K^{-2}}{q^{1/2}-q^{-1/2}}, \\ \{P,A_{\pm}\} = 0, \qquad
 [P,K] = 0, \qquad [P,K^{-1}] = 0, \qquad KK^{-1} = K^{-1}K = 1, \qquad P^2 = 1.
\end{gather*}
A Casimir element is given by
\begin{gather*}%\label{Gamma^q}
\Gamma^q = \left(-A_+A_-+\frac{q^{-1/2}K^2-q^{1/2}K^{-2}}{q-q^{-1}} \right)P.
\end{gather*}
Its coproduct $\Delta\colon \ospq\to\ospq^{\otimes 2}$ and counit $\epsilon\colon \ospq\to\K$ are defined by
\begin{alignat}{4}\label{def: Coproduct}
&\Delta(A_{\pm})= A_{\pm}\otimes KP + K^{-1}\otimes A_{\pm}, \quad \ \ \! &&\Delta\big(K^{\pm 1}\big)= K^{\pm 1}\otimes K^{\pm 1}, \quad \ \ \! &&\Delta(P)= P\otimes P,& \\
%\label{def: counit}
&\epsilon (A_{\pm})= 0, \qquad &&\epsilon\big(K^{\pm 1}\big)=1, \qquad &&\epsilon(P)= 1.&\nonumber
\end{alignat}
We will also need the definition of the $q$-anticommutator, given by
\[
\{X,Y\}_q = q^{1/2}XY+q^{-1/2}YX.
\]
Now let $\mathcal{I}_R$ be the subalgebra of $\ospq$ generated by $A_+K$, $A_-K$, $K^2P$ and $\Gamma^q$ and similarly write $\mathcal{I}_L$ for the $\ospq$-subalgebra generated by the elements $A_+K^{-1}P$, $A_-K^{-1}P$, $K^{-2}P$ and $\Gamma^q$. If one defines the algebra morphism $\tau_R\colon \mathcal{I}_R\to \ospq\otimes\mathcal{I}_R$ by
\begin{gather}
\tau_R(A_-K) = K^2P\otimes A_-K, \nonumber\\
\tau_R(A_+K) = \big(K^{-2}P\otimes A_+K\big)+q^{-1/2}\big(q-q^{-1}\big)\big(A_+^2P\otimes A_-K\big) \nonumber\\
\hphantom{\tau_R(A_+K) =}{} + q^{-1/2}\big(q^{1/2}-q^{-1/2}\big)\big(A_+K^{-1}P\otimes K^2P\big) +q^{-1/2}\big(q-q^{-1}\big)\big(A_+K^{-1}P\otimes \Gamma^q\big), \nonumber\\
\tau_R\big(K^2P\big) = 1\otimes K^2P - \big(q-q^{-1}\big)(A_+K\otimes A_-K),\nonumber\\
\tau_R\big(\Gamma^q\big) = 1\otimes \Gamma^q,\label{tau_R q-BI def}
\end{gather}
then $\mathcal{I}_R$ is readily checked to be a left coideal subalgebra of $\ospq$ and a left $\ospq$-comodule with coaction $\tau_R$. Similarly, the subalgebra $\mathcal{I}_L$ is a right coideal subalgebra of $\ospq$ and a right $\ospq$-comodule with coaction $\tau_L\colon \mathcal{I}_L\to\mathcal{I}_L\otimes \ospq$ defined by
\begin{gather}
\tau_L\big(A_-K^{-1}P\big) = A_-K^{-1}P\otimes K^{-2}P, \nonumber\\
\tau_L\big(A_+K^{-1}P\big) = A_+K^{-1}P\otimes K^2P - q^{1/2}\big(q-q^{-1}\big) A_-K^{-1}P\otimes A_+^2P \nonumber\\
\hphantom{\tau_L\big(A_+K^{-1}P\big) =}{} - q^{1/2}\big(q^{1/2}-q^{-1/2}\big)K^{-2}P\otimes A_+K - q^{1/2}\big(q-q^{-1}\big)\Gamma^q\otimes A_+K,\nonumber\\
\tau_L\big(K^{-2}P\big) = K^{-2}P\otimes 1 - \big(q-q^{-1}\big) A_-K^{-1}P\otimes A_+K^{-1}P, \nonumber\\
\tau_L\big(\Gamma^q\big) = \Gamma^q\otimes 1.\label{tau_L q-BI def}
\end{gather}
In analogy to Corollary \ref{cor cotensor product}, one can easily verify that the element $\Delta(\Gamma^q)\in\mathcal{I}_L\otimes\mathcal{I}_R$ lies in the cotensor product of $\mathcal{I}_L$ and $\mathcal{I}_R$, i.e.,
\[
(1\otimes\tau_R)\Delta\big(\Gamma^q\big) = (\tau_L\otimes 1)\Delta\big(\Gamma^q\big).
\]
This observation makes it possible to repeat the extension processes outlined in Section \ref{Section - Defining the higher rank generators} for the $q$-Bannai--Ito case, and in particular to state the following definition.

\begin{Definition}The $q$-Bannai--Ito algebra of rank $n-2$ is the subalgebra of $\ospq^{\otimes n}$ ge\-ne\-rated by the elements $\Gamma_A^q$ with $A\subseteq [1;n]$, which are constructed by the left, right or mixed extension processes of Definitions \ref{right extension process def}, \ref{left extension process def} and \ref{mixed extension process def}, upon replacing $\Lambda\in\Uqsl$ by $\Gamma^q\in\ospq$ and $\Delta$, $\tau_R$ and $\tau_L$ by the morphisms (\ref{def: Coproduct}), (\ref{tau_R q-BI def}) and (\ref{tau_L q-BI def}).
\end{Definition}

It was explained in \cite[Section 2.3]{DeBie&DeClercq&vandeVijver-2018} that the rank $n-2$ $q$-Bannai--Ito algebra is isomorphic to the Askey--Wilson algebra of the same rank. The isomorphism is explicitly given by
\[
\Lambda_A \mapsto -i\big(q-q^{-1}\big)\Gamma_A^q, \qquad q\mapsto iq^{1/2},
\]
where $i\in\K$ is a square root of $-1$.

Looking back at the Sections \ref{Section - Main results} to \ref{section - proof most general algebra relations}, it is clear that the Theorems \ref{thm - commutation general} and \ref{thm - most general algebra relations} were proven based solely on the coassociativity of the coproduct, the cotensor product property and the rank one relations (\ref{rank one 12-23})--(\ref{rank one 13-12}). The whole strategy of proof can thus be repeated to extend the rank one $q$-Bannai--Ito relations
\begin{gather*}
\big\{\Gamma_{\{1,2\}}^q,\Gamma_{\{2,3\}}^q\big\}_q = \Gamma_{\{1,3\}}^q + \big(q^{1/2}+q^{-1/2}\big)\big( \Gamma_{\{2\}}^q\Gamma_{\{1,2,3\}}^q + \Gamma_{\{1\}}^q\Gamma_{\{3\}}^q \big), \\
\big\{\Gamma_{\{2,3\}}^q,\Gamma_{\{1,3\}}^q\big\}_q = \Gamma_{\{1,2\}}^q + \big(q^{1/2}+q^{-1/2}\big)\big( \Gamma_{\{3\}}^q\Gamma_{\{1,2,3\}}^q + \Gamma_{\{1\}}^q\Gamma_{\{2\}}^q \big), \\
\big\{\Gamma_{\{1,3\}}^q,\Gamma_{\{1,2\}}^q\big\}_q = \Gamma_{\{2,3\}}^q + \big(q^{1/2}+q^{-1/2}\big)\big( \Gamma_{\{1\}}^q\Gamma_{\{1,2,3\}}^q + \Gamma_{\{2\}}^q\Gamma_{\{3\}}^q \big),
\end{gather*}
as shown in \cite{Genest&Vinet&Zhedanov-2016}, to higher rank. This leads to the following analog of Theorems \ref{thm - commutation general} and \ref{thm - most general algebra relations}.

\begin{Theorem}Let $A,B\subseteq [1;n]$ be such that $B\subseteq A$, then $\Gamma_A^q$ and $\Gamma_B^q$ commute.
\end{Theorem}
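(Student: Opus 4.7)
The plan is to transport the proof of Theorem \ref{thm - commutation general} verbatim to the $q$-Bannai--Ito setting. As highlighted in the paragraph preceding the theorem, every step in Sections \ref{Section - Main results}--\ref{section - proof most general algebra relations} rests only on three ingredients: coassociativity of $\Delta$, the cotensor product property $(1\otimes\tau_R)\Delta(\Gamma^q) = (\tau_L\otimes 1)\Delta(\Gamma^q)$, and the rank one defining relations. The cotensor property has just been verified, and the rank one $q$-Bannai--Ito relations are listed in this section, so each intermediate result carries over after the formal replacements $\Lambda\leadsto\Gamma^q$, $[\,\cdot\,,\,\cdot\,]_q\leadsto\{\,\cdot\,,\,\cdot\,\}_q$, $(q^{-2}-q^2)\leadsto 1$, and $(q-q^{-1})\leadsto (q^{1/2}+q^{-1/2})$ in the standard relation.

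Concretely, I would first establish the nine fundamental $q$-BI standard relations corresponding to Proposition \ref{prop - fundamental relations}, using the same inductive arguments on $k$ and $\ell$, the same nested anticommutator manipulations analogous to (\ref{q-anticomm 1})--(\ref{q-anticomm 4}), and the same chain of elementary commutation lemmas (Lemmas \ref{lemma Gamma of singleton}--\ref{Lemma 1.3.8} and \ref{Lemma 1.4.7}, transported). I would then mirror Section \ref{section - more commutation relations} to obtain the $q$-BI analogs of Lemma \ref{Proposition 1.5.2} (single interval), Proposition \ref{Proposition 1.5.6} (two intervals), and Proposition \ref{Lemma 1.5.12} (arbitrary $k$ with all middle intervals contained in $B$). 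With these in hand, the endgame induction on the number $k$ of discrete intervals comprising $A$ is identical: the cases $k=1,2$ are covered by the preceding propositions, and for $k\geq 3$ one splits into the four cases laid out in the proof of Theorem \ref{thm - commutation general}. The decisive one is Case~4, in which an element $x_1 \in A\setminus B$ lying in a middle interval is used to decompose $\Gamma_B^q$ via the $q$-BI version of Lemma \ref{Corollary 1.3.4}, yielding an iterative hole-filling procedure that terminates at an instance of the transported Proposition \ref{Lemma 1.5.12}.

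The main obstacle is purely organizational: one must keep track that each of the many $\chi$-morphisms built from $\Delta$, $\tau_R$, and $\tau_L$ that are invoked to promote the fundamental relations to their general form continues to function in the superalgebra setting. But since the equivalence of the right, left, and mixed extension processes (Proposition \ref{thm - left and right extension process} and Definition \ref{mixed extension process def}) rests only on coassociativity, the cotensor product property, and the basic shifting identity (\ref{Lemma 1.2.1 eq}), this bookkeeping is guaranteed to go through without modification.

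A second, more compact route is to invoke the canonical algebra isomorphism between the higher rank $q$-Bannai--Ito and Askey--Wilson algebras recalled just above the theorem: since it sends $\Lambda_A$ to a nonzero scalar multiple of $\Gamma_A^q$ under $q\mapsto iq^{1/2}$, and since ordinary commutators are preserved (and nonvanishing) under any algebra isomorphism, the theorem follows directly from Theorem \ref{thm - commutation general} once one has checked that this isomorphism intertwines the two extension procedures, which is carried out in \cite{DeBie&DeClercq&vandeVijver-2018}.
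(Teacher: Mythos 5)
Your primary route is exactly the paper's own argument: Section~\ref{section q-BI} justifies this theorem precisely by observing that the proof of Theorem~\ref{thm - commutation general} uses only coassociativity, the cotensor product property, and the rank one relations, all of which have been verified in the $\ospq$ setting, so the entire chain of lemmas transports verbatim under the substitutions you list. Your second route via the isomorphism $\Lambda_A\mapsto -i\big(q-q^{-1}\big)\Gamma_A^q$ is also sound (and is mentioned by the paper, though not used as the formal justification), so the proposal is correct and essentially coincides with the paper's approach.
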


\begin{Theorem}Let $A_1$, $A_2$, $A_3$ and $A_4$ be $($potentially empty$)$ subsets of $[1;n]$ satisfying
\[
A_1\prec A_2\prec A_3\prec A_4,
\]
where we have used Definition~{\rm \ref{Notation prec}}. The relation
\[
\big\{\Gamma_{A}^q,\Gamma_{B}^q\big\}_q = \Gamma_{(A\cup B)\setminus(A\cap B)}^q + \big(q^{1/2}+q^{-1/2}\big)\big(\Gamma_{A\cap B}^q\Gamma_{A\cup B}^q+ \Gamma_{A\setminus(A\cap B)}^q\Gamma_{B\setminus(A\cap B)}^q \big)
\]
is satisfied for $A$ and $B$ defined by one of the relations \eqref{most general sets 1}--\eqref{most general sets 3}.
\end{Theorem}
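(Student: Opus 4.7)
The plan is to exploit the observation, already highlighted by the author, that every step leading to Theorems~\ref{thm - commutation general} and~\ref{thm - most general algebra relations} rests only on three structural ingredients: coassociativity of $\Delta$, the cotensor product identity $(1\otimes\tau_R)\Delta(\Lambda)=(\tau_L\otimes 1)\Delta(\Lambda)$, and the three rank one relations (\ref{rank one 12-23})--(\ref{rank one 13-12}). Accordingly, my first task is to verify that the analogous ingredients are available in the $\ospq$ setting. Coassociativity of the coproduct (\ref{def: Coproduct}) is standard, the coideal and comodule properties of $\mathcal{I}_L$ and $\mathcal{I}_R$ together with the cotensor property $(1\otimes\tau_R)\Delta(\Gamma^q)=(\tau_L\otimes 1)\Delta(\Gamma^q)$ are direct computations from (\ref{tau_R q-BI def}) and (\ref{tau_L q-BI def}), and the rank one $q$-anticommutator relations for $\Gamma^q_{\{1,2\}}$, $\Gamma^q_{\{2,3\}}$, $\Gamma^q_{\{1,3\}}$ were proved in~\cite{Genest&Vinet&Zhedanov-2016}. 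Once these are in place, the entirety of Sections~\ref{Section - Defining the higher rank generators}--\ref{section - proof most general algebra relations} may be reproduced with $\Lambda$, $[\cdot,\cdot]_q$, $\Delta$, $\tau_R$, $\tau_L$ replaced by $\Gamma^q$, $\{\cdot,\cdot\}_q$ and the $\ospq$-versions of the three morphisms, yielding both theorems stated in this section.

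Concretely, I would proceed in the same order as the paper: first redo the right, left and mixed extension processes (Definitions~\ref{right extension process def}, \ref{left extension process def}, \ref{mixed extension process def}) to produce elements $\Gamma_A^q\in\ospq^{\otimes n}$, and re-establish Proposition~\ref{thm - left and right extension process} and Proposition~\ref{prop derived} word for word; none of these arguments uses the explicit form of~$\Lambda$. Next I would rewrite the auxiliary lemmas of Section~\ref{section - proof fundamental relations} (in particular the four bracket-swap identities (\ref{q-anticomm 1})--(\ref{q-anticomm 4}), which hold for any $q$-anticommutator with the same proof), and then check the fundamental cases (C1)--(C6$'$) in Proposition~\ref{prop - fundamental relations} with $q$-anticommutators throughout. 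Finally, the commutation results of Sections~\ref{section - more commutation relations} and~\ref{section - proof commutation general} and the combinatorial finale of Section~\ref{section - proof most general algebra relations} go through unchanged, since the morphisms $\chi$ used there are built exclusively from the three comodule maps.

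An alternative, equivalent route is to invoke directly the isomorphism recalled at the end of Section~\ref{section q-BI}: under $\Lambda_A\mapsto -i(q-q^{-1})\Gamma_A^q$ and $q\mapsto iq^{1/2}$ the $q$-commutator transforms, up to an overall scalar, into the $q$-anticommutator, and the numerical prefactors $q^{-2}-q^2$ and $q-q^{-1}$ transform precisely into $1$ and $q^{1/2}+q^{-1/2}$. Hence, provided one has independently checked that the $\Gamma_A^q$ constructed by the extension processes correspond to the $\Lambda_A$ under this isomorphism (which follows by inspecting how $\Delta$, $\tau_R$, $\tau_L$ and the Casimir elements match), both theorems follow as immediate corollaries of Theorems~\ref{thm - commutation general} and~\ref{thm - most general algebra relations}.

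The only real obstacle is a bookkeeping one: one must confirm that the correspondence between the two worlds is perfectly intertwined by the coactions, i.e., that $\tau_R$ on $\mathcal{I}_R\subset\ospq$ is truly the ``same'' morphism as $\tau_R$ on $\mathcal{I}_R\subset\Uqsl$ up to the substitution $q\mapsto iq^{1/2}$ and the rescaling of $\Lambda$ into $\Gamma^q$. Once this intertwining is verified on the generators, so that the isomorphism extends naturally to $\ospq^{\otimes n}$ and commutes with every $1^{\otimes n}\otimes\alpha\otimes 1^{\otimes m}$ used throughout the paper, nothing else in the argument needs to be re-examined and the two theorems of Section~\ref{section q-BI} follow.
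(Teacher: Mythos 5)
Your first route is exactly the paper's own proof: the paper observes that Theorems~\ref{thm - commutation general} and~\ref{thm - most general algebra relations} rest only on coassociativity, the cotensor product property and the rank one relations, verifies that the $\ospq$ analogs of these ingredients hold (with the rank one relations taken from \cite{Genest&Vinet&Zhedanov-2016}), and concludes that the entire argument of Sections~\ref{Section - Main results}--\ref{section - proof most general algebra relations} carries over mutatis mutandis. Your alternative route via the isomorphism $\Lambda_A\mapsto -i\big(q-q^{-1}\big)\Gamma_A^q$, $q\mapsto iq^{1/2}$ is not the mechanism the paper uses (it only recalls this isomorphism from \cite{DeBie&DeClercq&vandeVijver-2018}), but your primary argument coincides with the paper's.
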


\section{Conclusions and outlook}

In this paper, we have presented several construction techniques for the higher rank Askey--Wilson algebra ${\rm AW}(n)$, equivalent with the algorithm given in \cite{DeBie&DeClercq&vandeVijver-2018}. The key observation was the existence of a novel, right coideal comodule subalgebra of $\Uqsl$. We have proven a large class of algebraic identities inside ${\rm AW}(n)$, culminating in Theorems~\ref{thm - commutation general} and~\ref{thm - most general algebra relations}, by elementary and intrinsic methods. Each of the proofs also applies, mutatis mutandis, to the higher rank $q$-Bannai--Ito algebra, which is isomorphic to ${\rm AW}(n)$.

At present, it is still an open question whether the relations obtained in Theorems~\ref{thm - commutation general} and~\ref{thm - most general algebra relations} define the algebras ${\rm AW}(n)$ abstractly. Using computer algebra packages, we have obtained all subsets $A\subseteq \{1,\dots,n\}$ for which the relation (\ref{standard relation}) is satisfied, for several values of $n$, and each of them turned out to be of the form (\ref{most general sets 1}), (\ref{most general sets 2}) or (\ref{most general sets 3}). Hence if supplementary relations are needed to obtain a complete set of defining relations for the algebras ${\rm AW}(n)$, they will most likely not be of the form (\ref{standard relation}). Other related problems are the construction of similar algebras for more general quantum groups $U_q(\mathfrak{g})$ and the behavior of the obtained algebras at $q$ a root of unity different from 1. We plan to look into these questions in further research.

\subsection*{Acknowledgements}
HDC is a PhD Fellow of the Research Foundation Flanders (FWO). This work was also supported by FWO Grant EOS 30889451. The author wishes to thank the anonymous referees for their valuable suggestions and comments.

\pdfbookmark[1]{References}{ref}
\LastPageEnding

\end{document}